\theoremstyle{plain}
\newtheorem{prop}{Proposition}
\numberwithin{prop}{section}
\newtheorem{thm}[prop]{Theorem}
\newtheorem{cor}[prop]{Corollary}
\newtheorem{lmm}[prop]{Lemma}
\newenvironment{mthm}[1]
  {\innermthm}
  {\endinnermthm}
\newenvironment{mprop}[1]
  {\innermprop}
  {\endinnermprop}
\theoremstyle{definition}
\newtheorem{rmk}[prop]{Remark}
\newtheorem{ex}[prop]{Example}
\newtheorem{defi}[prop]{Definition}
\newtheorem*{claim}{Claim}
\newcommand{\normm}[1]{\ensuremath{\left\|#1\right\|}}
\newcommand{\gromnorm}[1]{\ensuremath{\normm{#1}_{1}}}
\newcommand{\inftynorm}[1]{\ensuremath{\normm{#1}_{\infty}}}
\newcommand{\st}{\ensuremath{\:\middle\vert\:}}
\renewcommand{\emptyset}{\varnothing}
\DeclareMathOperator{\scl}{scl}
\DeclareMathOperator{\Imm}{Im}
\DeclareMathOperator{\Ker}{Ker}
\DeclareMathOperator{\area}{area}
\DeclareMathOperator{\Hom}{Hom}
\DeclareMathOperator{\Homeo}{Homeo}
\DeclareMathOperator{\eu}{eu}
\DeclareMathOperator{\Or}{Or}
\DeclareMathOperator{\hmtp}{conj}
\DeclareMathOperator{\Supp}{Supp}
\title{Bavard duality for the relative Gromov seminorm}
\author{Alexis Marchand}
\date{\today}
\address{DPMMS, Centre for Mathematical Sciences, Wilberforce Road, Cambridge CB3 0WB, United Kingdom}
\email{\href{mailto:aptm3@cam.ac.uk}{aptm3@cam.ac.uk}}
\begin{document}

\begin{abstract}
    The relative Gromov seminorm is a finer invariant than stable commutator length where a relative homology class is fixed.
    We show a duality result between bounded cohomology and the relative Gromov seminorm, analogously to Bavard duality for scl.
    We give an application to computations of scl in graphs of groups.
    We also explain how our duality result can be given a purely algebraic interpretation via a relative version of the Hopf formula.
    Moreover, we show that this leads to a natural generalisation of a result of Calegari on a connection between scl and the rotation quasimorphism.
\end{abstract}

\maketitle

\section{Introduction}
    
    Stable commutator length, or $\scl$, is an invariant of groups that can be thought of as a kind of homological $\ell^1$-norm on the commutator subgroup.
    It has attracted attention for its connections with various topics in geometric topology and group theory --- see Calegari's book \cite{cal-scl} for a comprehensive survey.
    However, $\scl$ has proved very hard to compute: Calegari \cite{cal-sclrat} showed that $\scl$ is computable and has rational values in free groups, and Chen \cite{chen} generalised this to certain graphs of groups, encompassing previous results of various authors \cites{walker,cal-sails,chen-freeprods,cfl,susse}, but neither computability nor rationality of $\scl$ are known for closed surface groups.
    
    In \cite{m:isom-embed}, the author approaches the problem of understanding $\scl$ in surface groups by examining whether or not certain embeddings of surfaces are isometric for $\scl$.
    A conclusion of that paper is that some of the results that one can prove for $\scl$ in free groups can only be generalised to closed surface groups if one works in a fixed relative homology class.
    More precisely, the author generalises a result about $\scl$ in free groups to one about the \emph{relative Gromov seminorm}.
    
    The relative Gromov seminorm can be defined as the $\ell^1$-seminorm on $H_2\left(X,\gamma\right)$, where $X$ is a topological space, and $\gamma:\coprod S^1\rightarrow X$ is a collection of loops in $X$ --- see \S{}\ref{sec:gromov} for complete definitions.
    This was first introduced in \cite{m:isom-embed} but the idea is implicit in the work of Calegari (see for instance \cite{cal-fnb}*{Remark 3.18}).
    The relative Gromov seminorm is connected to $\scl$ via the following:
    
    \begin{mprop}{\ref{prop:scl-gromnorm}}[Gromov seminorm and $\scl$]
        Let $X$ be a path-connected topological space and let $[c]\in C_1^{\hmtp}\left(\pi_1X;\mathbb{Z}\right)$ be an integral conjugacy class represented by a map $\gamma:\coprod S^1\rightarrow X$. Then
        \[
            \scl_{\pi_1X}\left([c]\right)=\frac{1}{4}\inf\left\{\gromnorm{\alpha}\st{}\alpha\in H_2\left(X,\gamma;\mathbb{Q}\right),\:\partial\alpha=\left[\textstyle\coprod S^1\right]\right\},
        \]
        where $\partial:H_2(X,\gamma;\mathbb{Q})\rightarrow H_1\left(\coprod S^1;\mathbb{Q}\right)$ is the boundary map in the long exact sequence of the pair $\left(X,\gamma\right)$ (see Proposition \ref{prop:les}).
    \end{mprop}
    
    The metastrategy here is that one might be able to obtain information about stable commutator length by first understanding the relative Gromov seminorm, and then infimising over $H_2\left(X,\gamma\right)$.
    
    A pioneering result in the study of stable commutator length was the discovery of \emph{Bavard duality} \cite{bavard}, showing that the dual space of the $\scl$-seminorm can be understood in terms of \emph{quasimorphisms} --- see \cite{cal-scl}*{\S{}2.5} for more details.
    Bavard duality has led to a vast array of work on $\scl$, most notably yielding various spectral gap results \cites{calegari-fujiwara,bbf,heuer-raags,fft,cfl}, and it is natural to ask for an analogue in the context of the relative Gromov seminorm.
    Combining several well-known results, we show that \emph{bounded cohomology} provides such an analogue:
    
    \begin{mthm}{\ref{thm:bav-dual-rel-grom}}[Bavard duality for the relative Gromov seminorm]
        Let $X$ be a countable CW-complex and $\gamma:\coprod S^1\rightarrow X$.
        Given a real class $\alpha\in H_2(X,\gamma;\mathbb{R})$, the relative Gromov seminorm of $\alpha$ is given by
        \[
            \gromnorm{\alpha}=\sup\left\{\frac{\left<u,\alpha\right>}{\inftynorm{u}}\st{}u\in H^2_b\left(X;\mathbb{R}\right)\smallsetminus\{0\}\right\}.
        \]
    \end{mthm}
    
    The purpose of the present paper is to apply Theorem \ref{thm:bav-dual-rel-grom} in three different directions.
    
    The first one is in the context of graphs of groups, where we are able to use Theorem \ref{thm:bav-dual-rel-grom} and an isometric embedding theorem in bounded cohomology of Bucher et al.\ \cite{bbfplus} to show that vertex groups are isometrically embedded for the relative Gromov seminorm:
    
    \begin{mthm}{\ref{thm:isom-grph-gps}}[$\ell^1$-isometric embedding of vertex groups in graphs of groups]
        Let $\mathcal{G}$ be a graph of groups whose underlying graph $\Gamma$ is finite, with countable vertex groups $\left\{G_v\right\}_{v\in V(\Gamma)}$, and amenable edge groups $\left\{G_e\right\}_{e\in E(\Gamma)}$.
        Fix a vertex $v$ and consider the inclusion map $i_v:G_v\hookrightarrow\pi_1\mathcal{G}$.
        Then for each class $[c]\in C_1^{\hmtp}\left(G_v;\mathbb{Z}\right)$, the embedding
        \[
            {i_v}_*:H_2\left(G_v,[c];\mathbb{R}\right)\hookrightarrow H_2\left(\pi_1\mathcal{G},\left[i_v(c)\right];\mathbb{R}\right).
        \]
        is isometric for $\gromnorm{\cdot}$.
    \end{mthm}
    
    We show in \S{}\ref{sec:hnn} that the analogous statement for $\scl$ does not hold, but that Theorem \ref{thm:isom-grph-gps} can still yield computations of stable commutator length in certain HNN-extensions.
    For example, we compute the spectral gap of $\scl$ in Dyck's surface group and deduce from the Duncan--Howie Theorem \cite{duncan-howie} that it is not residually free.
    
    The second application is a purely algebraic interpretation of Theorem \ref{thm:bav-dual-rel-grom} via the \emph{Hopf formula}.
    We prove in Theorem \ref{thm:rel-hopf} that there is a relative version of the Hopf formula: if $G=F/R$ is a quotient of a free group $F$, and an infinite-order conjugacy class $[w]$ in $G$ is represented by $\bar{w}\in F$, then there is an isomorphism
    \[
        H_2\left(G,[w];\mathbb{Z}\right)\cong\left<\bar{w}\right>R\cap[F,F]/[F,R].
    \]
    We obtain a new interpretation of Theorem \ref{thm:bav-dual-rel-grom} from this point of view:
    
    \begin{mthm}{\ref{thm:bavard-hopf}}[Bavard duality via the Hopf formula]
        Consider an integral class $\alpha\in H_2\left(G,[w];\mathbb{Z}\right)$ represented by a product of commutators
        \[
            \left[\bar{a}_1,\bar{b}_1\right]\cdots\left[\bar{a}_k,\bar{b}_k\right]\in\left<\bar{w}\right>R\cap[F,F],
        \]
        and let $a_i$ and $b_i$ be the respective images of $\bar{a}_i$ and $\bar{b}_i$ under $F\rightarrow G$.
        
        Then the Gromov seminorm of $\alpha$ (seen as a rational class) is given by
        \begin{multline*}
            \gromnorm{\alpha}=\sup\left\{\frac{1}{\inftynorm{\psi}}\left(\psi\left(a_1,b_1\right)+\psi\left(a_1b_1,a_1^{-1}\right)+\psi\left(a_1b_1a_1^{-1},b_1^{-1}\right)+\cdots\right.\right.\\
            \left.\vphantom{\frac{1}{\inftynorm{\psi}}}\left.+\psi\left(\left[a_1,b_1\right]\cdots\left[a_{k-1},b_{k-1}\right]a_kb_ka_{k}^{-1},b_k^{-1}\right)\right)\st{}\textrm{$\psi:G^2\rightarrow\mathbb{R}$ a bounded $2$-cocycle}\right\}.
        \end{multline*}
    \end{mthm}
    
    Our third application is a generalisation of a result of Calegari \cite{cal-fnb} on a connection between $\scl$ and the rotation quasimorphism in compact hyperbolic surfaces $S$.
    Calegari proves in particular that, when $\partial S\neq\emptyset$, any immersed admissible surface is extremal for $\scl$, and when such a surface exists, then the rotation quasimorphism is also extremal \cite{cal-fnb}*{Proposition 3.8}.
    He also proves that, in the closed surface case, immersed admissible surfaces exist for all rational chains \cite{cal-fnb}*{Theorem C}, and he hints at the fact that his arguments give extremality of immersed admissible surfaces for the relative Gromov seminorm \cite{cal-fnb}*{Remark 3.18}.
    We make this explicit, using the \emph{bounded Euler class} as the analogue of the rotation quasimorphism in the context of the relative Gromov seminorm:
    
    \begin{mthm}{\ref{thm:eub-extr}}[Extremality of the bounded Euler class]
        Let $\gamma:\coprod S^1\rightarrow S$ be a collection of geodesic loops in a compact hyperbolic surface $S$.
        Let $\alpha\in H_2(S,\gamma;\mathbb{Q})$ be projectively represented by a positive immersion $f:\left(\Sigma,\partial\Sigma\right)\looparrowright\left(S,\gamma\right)$.
        Then
        \[
            \gromnorm{\alpha}=\frac{-2\chi^-(\Sigma)}{n(\Sigma)}=-2\left<\eu_b^\mathbb{R}(S),\alpha\right>,
        \]
        where $\eu_b^\mathbb{R}(S)\in H^2_b\left(S;\mathbb{R}\right)$ is the real bounded Euler class of $S$.
        
        In other words, $f$ is an extremal surface and $-\eu_b^\mathbb{R}(S)$ is an extremal class for $\alpha$.
        In particular, $\gromnorm{\alpha}\in\mathbb{Q}$.
    \end{mthm}

    \subsection*{Outline of the paper}
    
    We start in \S{}\ref{sec:gromov} by introducing the  homology of a space relative to a collection of loops and the associated $\ell^1$-seminorm; we show that it can be given a topological interpretation and deduce Proposition \ref{prop:scl-gromnorm}, which connects it to stable commutator length.
    We then show in \S{}\ref{sec:bavard} how bounded cohomology gives a homological version of Bavard duality, namely Theorem \ref{thm:bav-dual-rel-grom}.
    We give a first application in \S{}\ref{sec:hnn} to the context of graphs of groups, yielding Theorem \ref{thm:isom-grph-gps}.
    In \S{}\ref{sec:hopf}, we prove a relative Hopf formula and obtain Theorem \ref{thm:bavard-hopf}.
    Finally, \S{}\ref{sec:euler} is devoted to the connection between the relative Gromov seminorm and the bounded Euler class, leading to Theorem \ref{thm:eub-extr}.
    
    While \S{}\ref{sec:gromov} and \S{}\ref{sec:bavard} lay the foundations of this paper, \S{}\ref{sec:hnn}, \S{}\ref{sec:hopf}, and \S{}\ref{sec:euler} can be read independently of each other.
    
    \subsection*{Acknowledgements}
    
    I would like to thank my supervisor Henry Wilton for many helpful discussions.
    I am also extremely grateful to Pierre de la Harpe, whose detailed comments on this paper have greatly improved the clarity of the exposition.
    This work was funded by an EPSRC PhD studentship.

\section{The relative Gromov seminorm}\label{sec:gromov}

    The Gromov seminorm will be our measure of complexity for relative homology classes.
    We approach it from two points of view: first as an $\ell^1$-seminorm, then as a measure of the minimal complexity of surfaces representing a given class.
    We'll show that, for rational classes, those two points of view coincide.
    This is well-known for absolute homology \cite{cal-scl}*{\S{}1.2.5}, and we adapt previous arguments to the relative case.

    \subsection{Conjugacy classes of \texorpdfstring{$1$}{1}-chains}\label{subsec:1-chns}
    
    Let $X$ be a path-connected topological space, and let $G=\pi_1X$.
    There is a correspondence between free homotopy classes of loops $S^1\rightarrow X$ and conjugacy classes of elements of $\pi_1X$.
    In this paper, we are interested in free homotopy classes of finite collections of loops $\coprod S^1\rightarrow X$, which can be encoded by certain classes of $1$-chains on $\pi_1X$, as we explain below.
    
    Fix a coefficient ring $R=\mathbb{Z}$ or $\mathbb{Q}$ or $\mathbb{R}$.
    We denote by $C_n\left(G;R\right)$ the group of $n$-chains on $G$ with coefficients in $R$:
    \[
        C_n\left(G;R\right)\coloneqq\bigoplus_{g_1,\dots,g_n\in G}R\left(g_1,\dots, g_n\right).
    \]
    These form a chain complex $C_*\left(G;R\right)$, with boundary maps $d_n:C_n\left(G;R\right)\rightarrow C_{n-1}\left(G;R\right)$ given by
    \begin{multline*}
        d_n\left(w_1,\dots, w_n\right)\coloneqq\left(w_2,\dots, w_n\right)-\left(w_1w_2, w_3,\dots, w_n\right)+\left(w_1, w_2w_3,\dots, w_n\right)\\
        -\dots+(-1)^{n-1}\left(w_1,\dots, w_{n-2}, w_{n-1}w_n\right)+(-1)^n\left(w_1,\dots, w_{n-1}\right).
    \end{multline*}
    The homology of the chain complex $C_*\left(G;R\right)$ is the group homology $H_*\left(G;R\right)$.
    See \cite{weibel}*{\S{}6.5} for more details: our $C_*(G;R)$ is the tensor product of the $\mathbb{Z}G$-module $R$ (with trivial $G$-action) with the bar resolution of $G$.
        
    We will also use the following notations:
    \begin{itemize}
        \item $Z_n\left(G;R\right)\coloneqq\Ker\left(d_{n}\right)\subseteq C_n\left(G;R\right)$ is the group of \emph{$n$-cycles}.
        \item $B_n\left(G;R\right)\coloneqq\Imm\left(d_{n+1}\right)\subseteq Z_n\left(G;R\right)$ is the group of \emph{$n$-boundaries}.
    \end{itemize}
    
    We now focus on $1$-chains, i.e.\ elements of $C_1\left(G;R\right)=\bigoplus_{g\in G}Rg$.
    The \emph{support} of a $1$-chain $c=\sum_{g}\lambda_gg$ (with $\lambda_g\in R$ for $g\in G$) is the finite set $\Supp c\coloneqq\left\{g\in G\st{}\lambda_g\neq0\right\}$.
    
    We consider the sub-$R$-module $K\left(G;R\right)$ of $C_1\left(G;R\right)$ spanned by elements of the form $\left(w-w^{t}\right)$ for $w,t\in G$, where we write $w^t=t^{-1}wt$.
    
    \begin{rmk}\label{rk:incl-kgr}
        $K\left(G;R\right)\subseteq B_1\left(G;R\right)\subseteq Z_1\left(G;R\right)=C_1\left(G;R\right)$.
    \end{rmk}
    \begin{proof}
        The equality $Z_1\left(G;R\right)=C_1\left(G;R\right)$ follows from the fact that $d_1=0$, and the inclusion $B_1\left(G;R\right)\subseteq Z_1\left(G;R\right)$ is because $C_*\left(G;R\right)$ is a chain complex.
        It remains to show that $K\left(G;R\right)\subseteq B_1\left(G;R\right)$.
        This follows from the following computation, for $w,t\in G$:
        \begin{align*}
            d_2\left(\left(t^{-1},wt\right)+\left(w,t\right)-\left(t^{-1},t\right)-\left(1,1\right)\right)
            &=\left(wt-w^t+t^{-1}\right)+\left(t-wt+w\right)
            \\&\phantom{aaa}-\left(t-1+t^{-1}\right)-\left(1-1+1\right)
            \\&=w-w^t.\qedhere
        \end{align*}
    \end{proof}
    
    \begin{defi}
        The $R$-module of \emph{conjugacy classes of chains} on $G$ with coefficients in $R$ is the quotient
        \[
            C_1^{\hmtp}\left(G;R\right)\coloneqq C_1\left(G;R\right)/K\left(G;R\right).
        \]
        We will also denote by $B_1^{\hmtp}\left(G;R\right)$ the image of $B_1\left(G;R\right)$ in $C_1^{\hmtp}\left(G;R\right)$ --- elements of $B_1^{\hmtp}\left(G;R\right)$ are called \emph{conjugacy classes of boundaries}\footnote{Compare with \cite{cal-scl}*{Definition 2.78}, where Calegari introduces a quotient of $B_1^{\hmtp}\left(G;R\right)$.}.
    \end{defi}
    
    We denote by $\pi:C_1\left(G;R\right)\twoheadrightarrow C_1^{\hmtp}\left(G;R\right)$ the projection.
    Given a $1$-chain $c\in C_1\left(G;R\right)$, we write $[c]\coloneqq\pi(c)\in C_1^{\hmtp}\left(G;R\right)$.
    
    Note that there is a new chain complex
    \[
        \cdots\xrightarrow{d_{n+1}}C_n\left(G;R\right)\xrightarrow{d_n}\cdots\xrightarrow{d_3}C_2\left(G;R\right)\xrightarrow{d_2^c\coloneqq\pi\circ d_2}C_1^{\hmtp}\left(G;R\right)\xrightarrow{d_1=0}C_0\left(G;R\right),
    \]
    which we denote by $C_*^{\hmtp}\left(G;R\right)$.
    In fact, this chain complex can be used to compute the first homology of $G$:
    \begin{rmk}\label{rk:compute-homol-homog}
        There is an isomorphism $H_1\left(G;R\right)\cong C_1^{\hmtp}\left(G;R\right)/B_1^{\hmtp}\left(G;R\right)$.
    \end{rmk}
    \begin{proof}
        There is a commutative diagram with exact rows:
            \[
                \vcenter{\hbox{\begin{tikzpicture}[every node/.style={draw=none,fill=none,rectangle},xscale=1.4]
                    \node (A) at (0,-1.25) {$C_2\left(G;R\right)$};
                    \node (Ap) at (0,0) {$C_2(G;R)$};
                    \node (B) at (2,-1.25) {$C_1^{\hmtp}\left(G;R\right)$};
                    \node (Bp) at (2,0) {$C_1\left(G;R\right)$};
                    \node (C) at (4,-1.25) {$H_1\left(C_*^{\hmtp}\left(G;R\right)\right)$};
                    \node (Cp) at (4,0) {$H_1\left(G;R\right)$};
                    \node (D) at (5.5,-1.25) {$0$};
                    \node (Dp) at (5.5,0) {$0$};
                    
                    \node [rotate=-90,xscale=2,yscale=1.2] at (0,-.625) {$=$};
                    
                    \begin{scope}[decoration={
                        markings,
                        mark=at position .9 with {\arrow{>}}}
                        ] 
                        \draw[postaction={decorate},->] (Bp)->(B) node [midway,right] {$\pi$};
                        \draw[postaction={decorate},->] (Cp)->(C) node [midway,right] {$\pi_*$};
                    \end{scope}
                    
                    \draw [->] (A) -> (B) node [midway,above] {$d_2^c$};
                    \draw [->] (B) -> (C);
                    \draw [->] (C) -> (D);
                    
                    \draw [->] (Ap) -> (Bp) node [midway,above] {$d_2$};
                    \draw [->] (Bp) -> (Cp);
                    \draw [->] (Cp) -> (Dp);
                \end{tikzpicture}}}
            \]
        Given $\alpha\in\Ker\pi_*\subseteq H_1\left(G;R\right)$, pick $a\in C_1\left(G;R\right)$ mapping to $\alpha$ under $C_1\left(G;R\right)\rightarrow H_1\left(G;R\right)$.
        Then $\pi(a)\in\Imm\left(d_2^c\right)$, i.e.\ there is $b\in C_2\left(G;R\right)$ such that $a-d_2b\in\Ker\pi=K\left(G;R\right)$.
        But $K\left(G;R\right)\subseteq B_1\left(G;R\right)=\Imm d_2$ (see Remark \ref{rk:incl-kgr}), so $a\in\Imm d_2$ and $\alpha=0$.
        Therefore, $\pi_*:H_1\left(G;R\right)\rightarrow H_1\left(C_*^{\hmtp}\left(G;R\right)\right)$ is an isomorphism, which proves the result since $H_1\left(C_*^{\hmtp}\left(G;R\right)\right)\cong C_1^{\hmtp}\left(G;R\right)/B_1^{\hmtp}\left(G;R\right)$.
    \end{proof}
    
    \subsection{Standard form for conjugacy classes of chains}\label{subsec:std-form-chns}
    
    We now give for each conjugacy class of chains in $C_1^{\hmtp}\left(G;R\right)$ a standard representative in $C_1\left(G;R\right)$ having a natural topological counterpart.
    
    This standard representative will be unique up to reordering and conjugacy, and we will use the following lemma to prove uniqueness:
    
    \begin{lmm}\label{lmm:kgz}
        Let $\kappa\in K\left(G;R\right)$.
        Suppose that there is no pair of distinct conjugate elements in $\Supp\kappa$.
        Then $\kappa=0$.
    \end{lmm}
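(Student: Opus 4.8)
The plan is to test $\kappa$ against a family of linear functionals, one for each conjugacy class, that are tailored to vanish on $K\left(G;R\right)$.

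For a conjugacy class $\mathcal{C}$ of $G$, let $\epsilon_{\mathcal{C}}:C_1\left(G;R\right)\rightarrow R$ be the $R$-linear map defined on the basis by $\epsilon_{\mathcal{C}}(g)=1$ if $g\in\mathcal{C}$ and $\epsilon_{\mathcal{C}}(g)=0$ otherwise. The key point is that $\epsilon_{\mathcal{C}}$ annihilates every generator of $K\left(G;R\right)$: for $w,t\in G$, the elements $w$ and $w^t=t^{-1}wt$ are conjugate, so they lie in the same conjugacy class, hence either both belong to $\mathcal{C}$ or neither does, and in both cases $\epsilon_{\mathcal{C}}\left(w-w^t\right)=0$. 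Therefore $\epsilon_{\mathcal{C}}$ vanishes identically on $K\left(G;R\right)$, and in particular $\epsilon_{\mathcal{C}}(\kappa)=0$.

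On the other hand, write $\kappa=\sum_{g\in G}\lambda_g g$ with $\lambda_g\in R$. Then $\epsilon_{\mathcal{C}}(\kappa)=\sum_{g\in\Supp\kappa\cap\mathcal{C}}\lambda_g$. Since all elements of $\mathcal{C}$ are mutually conjugate and, by hypothesis, $\Supp\kappa$ contains no pair of distinct conjugate elements, the set $\Supp\kappa\cap\mathcal{C}$ has at most one element. If it were nonempty, say $\Supp\kappa\cap\mathcal{C}=\{g_0\}$, we would get $\lambda_{g_0}=\epsilon_{\mathcal{C}}(\kappa)=0$, contradicting $g_0\in\Supp\kappa$. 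Hence $\Supp\kappa\cap\mathcal{C}=\emptyset$. Letting $\mathcal{C}$ range over all conjugacy classes of $G$, which cover $G$, we conclude $\Supp\kappa=\emptyset$, i.e.\ $\kappa=0$.

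I do not expect any real obstacle in this argument; the only thing worth a moment's care is the observation that each generator $w-w^t$ is ``conjugacy-homogeneous'', which is exactly what makes the functionals $\epsilon_{\mathcal{C}}$ adapted to $K\left(G;R\right)$. (A basis-free variant would note that $K\left(G;R\right)=\bigoplus_{\mathcal{C}}\left(K\left(G;R\right)\cap\bigoplus_{g\in\mathcal{C}}Rg\right)$ and then argue one conjugacy class at a time, but the functional version above is the shortest route.)
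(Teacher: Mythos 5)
Your proof is correct, and it takes a genuinely different route from the paper's. The paper argues combinatorially by minimality: it writes $\kappa=\sum_{i=1}^r\lambda_i\left(w_i-w_i^{t_i}\right)$ with $r$ minimal, assumes $\kappa\neq0$, notes that (after normalising) $w_r\in\Supp\kappa$ while $w_r^{t_r}\notin\Supp\kappa$, and uses the vanishing of the coefficient of $w_r^{t_r}$ to regroup the terms into an expression with $r-1$ generators, contradicting minimality. You instead exploit the grading of $C_1\left(G;R\right)$ by conjugacy classes: each generator $w-w^t$ of $K\left(G;R\right)$ is homogeneous for this grading with coefficient sum zero, so the per-class augmentation functionals $\epsilon_{\mathcal{C}}$ annihilate $K\left(G;R\right)$, and under the hypothesis each class meets $\Supp\kappa$ in at most one element, forcing all coefficients of $\kappa$ to vanish. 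Your argument is shorter and more conceptual, and it actually proves the sharper statement that every element of $K\left(G;R\right)$ has vanishing coefficient sum on each conjugacy class; since conversely any class-homogeneous chain with coefficient sum zero lies in $K\left(G;R\right)$, this identifies $K\left(G;R\right)$ with the kernel of the natural map $C_1\left(G;R\right)\rightarrow\bigoplus_{\mathcal{C}}R$, i.e.\ $C_1^{\hmtp}\left(G;R\right)$ with the free $R$-module on conjugacy classes, from which the uniqueness part of Lemma \ref{lmm:std-form-chns} also follows immediately. The paper's rewriting argument stays entirely at the level of the generators and introduces no dual functionals, but gains nothing extra here; both proofs are complete.
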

    \begin{proof}
        By definition, $\kappa$ can be written as a linear combination
        \begin{equation}\label{eq:std-form-kgr}
            \kappa=\sum_{i=1}^r\lambda_i\left(w_i-w_i^{t_i}\right),
        \end{equation}
        with $\lambda_1,\dots,\lambda_r\in R\smallsetminus\{0\}$, $w_1,\dots,w_r\in G$, $t_1,\dots,t_r\in G$, and $w_i\neq w_i^{t_i}$ for all $i$.
        We choose a decomposition (\ref{eq:std-form-kgr}) such that $r$ is minimal.
        Assume for contradiction that $\kappa\neq0$.
        In particular, $r\geq1$ and $\Supp\kappa\neq\emptyset$.
        After reordering, we may assume that at least one of $w_r$ and $w_r^{t_r}$ lies in $\Supp\kappa$.
        However, there is no pair of distinct conjugate elements in $\Supp\kappa$.
        Without loss of generality, we can therefore assume that $w_r\in\Supp\kappa$ and $w_r^{t_r}\not\in\Supp\kappa$.
        Now define
        \begin{align*}
            I_1&\coloneqq\left\{i<r\st{}w_i=w_r^{t_r}\right\},\\
            I_2&\coloneqq\left\{i<r\st{}w_i^{t_i}=w_r^{t_r}\right\}.
        \end{align*}
        The sets $I_1$ and $I_2$ are disjoint since $w_i\neq w_i^{t_i}$ for all $i$, and we also set $I_0\coloneqq\left\{1,\dots,r-1\right\}\smallsetminus\left(I_1\amalg I_2\right)$, so that $I_0,I_1,I_2$ form a partition of $\left\{1,\dots,r-1\right\}$.
        Since the coefficient of $w_r^{t_r}$ in $\kappa$ vanishes, we have
        \[
            \lambda_r=\sum_{i\in I_1}\lambda_i-\sum_{i\in I_2}\lambda_i.
        \]
        Therefore, setting $p_i=\left(w_i-w_i^{t_i}\right)$ for each $i\in\left\{1,\dots,r\right\}$, we can rewrite
        \begin{equation}\label{eq:rewrite-kappa}
            \kappa=\sum_{i\in I_0}\lambda_ip_i+\sum_{i\in I_1}\lambda_i\left(p_i+p_r\right)+\sum_{i\in I_2}\lambda_i\left(p_i-p_r\right).
        \end{equation}
        Now note that:
        \begin{itemize}
            \item For $i\in I_1$, $p_i+p_r=w_r-w_i^{t_i}=w_r-w_r^{t_rt_i}$.
            \item For $i\in I_2$, $p_i-p_r=w_i-w_r=w_i-w_i^{t_it_r^{-1}}$.
        \end{itemize}
        Therefore, (\ref{eq:rewrite-kappa}) is a decomposition of $\kappa$ of the form (\ref{eq:std-form-kgr}) with at most $\left|I_0\right|+\left|I_1\right|+\left|I_2\right|=r-1$ terms.
        This contradicts the minimality of $r$, so $\kappa=0$.\qedhere
    \end{proof}
    
    We can now obtain our standard form:
    
    \begin{lmm}[Standard form for conjugacy classes of chains]\label{lmm:std-form-chns}
        Let $[c]\in C_1^{\hmtp}\left(G;R\right)$.
        \begin{enumerate}
            \item There is a $1$-chain
            \[
                c_0=\sum_{i=1}^d\lambda_iw_i\in C_1\left(G;R\right),
            \]
            such that $\left[c_0\right]=[c]$ in $C_1^{\hmtp}\left(G;R\right)$, where $d\in\mathbb{N}_{\geq0}$, $\lambda_1,\dots,\lambda_d\in R\smallsetminus\{0\}$, and $w_1,\dots,w_d\in G$ are pairwise non-conjugate.
            
            \item Assume that $c'_0=\sum_{i=1}^{d'}\lambda'_iw'_i\in C_1\left(G;R\right)$ also satisfies $\left[c_0\right]=\left[c_0'\right]$, where $w'_1,\dots,w'_{d'}$ are pairwise non-conjugate.
            Then $d=d'$, and there is a permutation $\sigma\in\mathfrak{S}_d$, and elements $t_i\in G$, such that $w'_{\sigma(i)}=w_i^{t_i}$ and $\lambda'_{\sigma(i)}=\lambda_i$ for all $i$.\label{lmm:std-form-chns:2}
        \end{enumerate}
    \end{lmm}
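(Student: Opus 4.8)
The plan is to establish existence in (i) by collapsing each conjugacy class occurring in an arbitrary representative to a single element, and uniqueness in (ii) by reducing to Lemma~\ref{lmm:kgz}.

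For (i), I would start with any $c\in C_1(G;R)$ with $\pi(c)=[c]$, written $c=\sum_{g\in\Supp c}\lambda_g g$. Partitioning the finite set $\Supp c$ according to the conjugacy classes it meets, fixing a representative $w_k$ in each such class $C_k$, and using that $g-g^t\in K(G;R)$ (so that conjugate elements agree in $C_1^{\hmtp}(G;R)$), one gets $c\equiv\sum_k\mu_k w_k$ modulo $K(G;R)$, where $\mu_k\coloneqq\sum_{g\in C_k\cap\Supp c}\lambda_g$. Discarding the indices with $\mu_k=0$ and relabelling yields the required $c_0$, whose (possibly empty) list of elements is pairwise non-conjugate with nonzero coefficients.

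For (ii), I would set $\kappa\coloneqq c_0-c_0'\in K(G;R)$ (the difference lies in $K(G;R)$ since $[c_0]=[c_0']$) and organise $\Supp\kappa$ by conjugacy class: as the $w_i$ are pairwise non-conjugate and likewise the $w'_i$, each conjugacy class contains at most one $w_i$ and at most one $w'_j$. For every class $C$ containing both some $w_{i(C)}$ and some $w'_{j(C)}$, I would pick $s_C\in G$ with $w'_{j(C)}=w_{i(C)}^{s_C}$ and replace $\kappa$ by
\[
    \tilde\kappa\coloneqq\kappa+\sum_C\lambda'_{j(C)}\bigl(w_{i(C)}^{s_C}-w_{i(C)}\bigr),
\]
which still lies in $K(G;R)$ and whose support now meets every conjugacy class in at most one point; in particular $\Supp\tilde\kappa$ contains no pair of distinct conjugate elements, so Lemma~\ref{lmm:kgz} forces $\tilde\kappa=0$. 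Reading off coefficients of $\tilde\kappa$: a conjugacy class meeting $\{w_1,\dots,w_d\}$ but none of the $w'_j$ would make some $\lambda_i$ vanish, which is impossible, and symmetrically for the $w'_j$; hence $C\mapsto(i(C),j(C))$ defines a bijection between the conjugacy classes represented by the two lists, giving a permutation $\sigma\in\mathfrak{S}_d$ (so $d=d'$) with $w'_{\sigma(i)}=w_i^{t_i}$ for suitable $t_i\in G$, and $\tilde\kappa=0$ then yields $\lambda'_{\sigma(i)}=\lambda_i$.

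The only real work is the bookkeeping in (ii) — tracking $\Supp\kappa$ across conjugacy classes and choosing the correction terms so that Lemma~\ref{lmm:kgz} becomes applicable; once that is set up, the conclusion is immediate. (One uses here that the $\lambda'_i$ are, like the $\lambda_i$, nonzero, which is implicit in the hypothesis that $c_0'$ is a representative of the same form.)
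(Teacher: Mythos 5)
Your proof is correct. For (i) you collapse each conjugacy class in the support directly, using $g-g^t\in K\left(G;R\right)$, whereas the paper takes a representative with a minimal number of terms and deduces pairwise non-conjugacy from minimality; the two arguments are interchangeable. For (ii) both proofs hinge on Lemma~\ref{lmm:kgz}, but you deploy it differently: the paper argues by induction on $d+d'$, using the lemma in contrapositive form to find one $w_i$ conjugate to one $w'_j$, peeling off that matched pair and invoking the induction hypothesis, while you make a single global correction of $\kappa=c_0-c_0'$ by the elements $\lambda'_{j(C)}\left(w_{i(C)}^{s_C}-w_{i(C)}\right)\in K\left(G;R\right)$, one for each conjugacy class meeting both lists, so that the corrected chain $\tilde\kappa$ has at most one support point per conjugacy class; Lemma~\ref{lmm:kgz} then forces $\tilde\kappa=0$ in one stroke, and the permutation $\sigma$, the equality $d=d'$, and the equalities $\lambda'_{\sigma(i)}=\lambda_i$ are all read off from the vanishing coefficients. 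Your route avoids the induction entirely and is arguably slightly more transparent. You were also right to flag that the nonvanishing of the $\lambda'_i$ is needed in (ii); it is implicit in the statement (the second chain is meant to be of the same standard form), and the paper's induction relies on it in exactly the same tacit way.
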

    \begin{proof}
        \begin{enumerate}
            \item Write $c=\sum_{i=1}^d\lambda_iw_i\in C_1\left(G;R\right)$, with $\lambda_1,\dots,\lambda_d\in R\smallsetminus\{0\}$ and $w_1,\dots,w_d\in G$.
            Assume moreover that $d$ is minimal among all representatives $c$ of the class $[c]$.
            If there are $j\neq k$ such that $w_j=w_k^t$ for some $t\in G$, then
            \[
                c\equiv\left(\lambda_j+\lambda_k\right)w_j+\sum_{\substack{1\leq i\leq d\\i\neq j,k}}\lambda_iw_i\mod K\left(G;R\right),
            \]
            which contradicts the minimality of $d$.
            Therefore, no two of the $w_i$'s are conjugate as wanted.
            
            \item We argue by induction on $d+d'$.
            If $d+d'=0$, then $c_0=0=c_0'$.
            Assume that $d+d'\geq 1$ and consider $\kappa\coloneqq c_0-c_0'\in K\left(G;R\right)$.
            If $\kappa=0$, then $c_0=c_0'$ and there is nothing to prove; otherwise, Lemma \ref{lmm:kgz} implies the existence of a pair of distinct conjugate elements in $\Supp\kappa$.
            But $\Supp\kappa\subseteq\left\{w_i\right\}_{1\leq i\leq d}\cup\left\{w'_i\right\}_{1\leq i\leq d'}$.
            By assumption on the $w_i$'s and $w'_i$'s, this implies that one of the $w_i$'s is conjugate to one of the $w'_i$'s.
            After relabelling, we can assume that $w_d$ is conjugate to $w'_{d'}$.
            We then consider
            \[
                c_1\coloneqq\left(\lambda_d-\lambda'_{d'}\right)w_d+\sum_{i<d}\lambda_iw_i\quad\text{and}\quad c'_1\coloneqq\sum_{i<d'}\lambda'_iw'_i.
            \]
            Note that $c_1\equiv c_1'\mod K\left(G;R\right)$, so the induction hypothesis applies to $c_1$ and $c_1'$.
            If $\lambda_d\neq\lambda'_{d'}$, then we deduce that $w_d$ is conjugate to some $w'_i$ with $i<d'$, and therefore $w'_{d'}$ is conjugate to $w'_i$, which is a contradiction.
            Therefore, $\lambda_d=\lambda'_{d'}$, and the result follows from the induction hypothesis applied to $c_1$ and $c_1'$.\qedhere
        \end{enumerate}
    \end{proof}
    
    \begin{rmk}
        \begin{enumerate}
            \item A group element can be seen as an element of $C_1\left(G;\mathbb{Z}\right)$, and conjugacy classes of chains generalise conjugacy classes of group elements, in the sense that, for $w\in G$, Lemma \ref{lmm:std-form-chns} implies that $\pi^{-1}\left([w]\right)\cap G$ is exactly the conjugacy class of $w$ in $G$.\label{rk:homot-clss-chns:1}
            \item The equivalence relation given by $K\left(G;R\right)$ on $1$-chains should be thought of as the algebraic analogue of (free) homotopy.
            This is parallel to the equivalence relation given by $B_1\left(G;R\right)$, which is the algebraic analogue of homology.
            Hence, Remark \ref{rk:incl-kgr} is an algebraic formulation of the fact that homotopic maps also represent the same class in homology.
        \end{enumerate}
        \label{rk:homot-clss-chns}
    \end{rmk}
    
    We now see $G$ as the fundamental group of a path-connected space $X$.
    Pick an integral conjugacy class $[c]\in C_1^{\hmtp}\left(G;\mathbb{Z}\right)$, and let $c_0=\sum_i\lambda_iw_i\in C_1\left(G;\mathbb{Z}\right)$ be a standard representative of $[c]$ given by Lemma \ref{lmm:std-form-chns}.
    For each $i$, pick a loop $\gamma_i:S^1\rightarrow X$ whose free homotopy class in $X$ corresponds to the conjugacy class of $w_i^{\lambda_i}$ in $G$, and consider the map
    \[
        \textstyle\gamma\coloneqq\coprod_i\gamma_i:\coprod_iS^1\rightarrow X.
    \]
    By Lemma \ref{lmm:std-form-chns}\ref{lmm:std-form-chns:2}, the free homotopy class $[\gamma]$ of $\gamma$ only depends on the class $[c]$.
    If $\Xi$ is the set of free homotopy classes of finite unordered collections of pairwise non-homotopic oriented loops $\coprod S^1\rightarrow X$, then this defines a map
    \[
        C_1^{\hmtp}\left(G;\mathbb{Z}\right)\rightarrow\Xi
    \]
    given by $[c]\mapsto\left[\gamma\right]$.
    We will say that the map $\gamma$ \emph{represents} the conjugacy class $[c]$.
    
    Conversely, consider the free homotopy class of a map $\gamma:\coprod S^1\rightarrow X$, with components $\left\{\gamma_i:S^1\rightarrow X\right\}_i$.
    For each $i$, pick an element $w_i\in G$ whose conjugacy class corresponds to the free homotopy class of $\gamma_i$.
    Sending the free homotopy class of $\gamma$ to $\left[\sum_iw_i\right]\in C_1^{\hmtp}\left(G;\mathbb{Z}\right)$ defines a right inverse to the map $C_1^{\hmtp}\left(G;\mathbb{Z}\right)\rightarrow\Xi$ constructed above.
    
    Hence, the map $C_1^{\hmtp}\left(G;\mathbb{Z}\right)\rightarrow\Xi$ is surjective, but note that it is not injective: given $w\in G\smallsetminus\{1\}$ and $\lambda\in\mathbb{Z}\smallsetminus\{1\}$, the conjugacy classes $\left[\lambda w\right]$ and $\left[w^\lambda\right]$ are distinct but are represented by the same (homotopy class of) loop $\gamma:S^1\rightarrow X$.
    
    \begin{rmk}\label{rmk:scl-norm}
        The algebraic definition of stable commutator length ($\scl$) in terms of products of commutators as a function $G\rightarrow\left[0,\infty\right]$ can be shown to extend to $C_1^{\hmtp}\left(G;\mathbb{Z}\right)$, and then by linearity to $C_1^{\hmtp}(G;\mathbb{R})$ --- see \cite{cal-scl}*{\S{}2.6}.
        For us, $\scl$ will be defined by its topological interpretation, given by Proposition \ref{prop:scl-gromnorm} below, and this definition will naturally be given for classes in $C_1^{\hmtp}\left(G;\mathbb{Z}\right)$.
    \end{rmk}
    
    \subsection{Homology of a space relative to a chain}\label{subsec:rel-hom}
    
    Let $\gamma:\coprod S^1\rightarrow X$ be a finite (unordered) collection of loops.
    We denote by $X_\gamma$ the \emph{mapping cylinder} of $\gamma$:
    \[
        \textstyle X_\gamma\coloneqq\left(X\amalg\left(\coprod S^1\times[0,1]\right)\right)/\sim,
    \]
    where $\sim$ is the equivalence relation generated by $(u,0)\sim\gamma(u)$ for $u\in\coprod S^1$.
    There is an embedding $\coprod S^1\hookrightarrow X_\gamma$ via $u\mapsto(u,1)$, and we will identify $\coprod S^1$ with its image under this embedding.
    
    \begin{defi}
        The homology of the pair $(X,\gamma)$ over the coefficient ring $R=\mathbb{Z}$ or $\mathbb{Q}$ or $\mathbb{R}$ is defined as the singular homology of the pair $\left(X_\gamma,\coprod S^1\right)$:
        \[
            \textstyle H_*\left(X,\gamma;R\right)\coloneqq H_*\left(X_\gamma,\coprod S^1;R\right).
        \]
    \end{defi}
        
    We remark that the homotopy type of the pair $\left(X_\gamma,\coprod S^1\right)$ --- and therefore the homology $H_*\left(X,\gamma\right)$ --- only depends on the free homotopy class of $\gamma$.
    
    It is useful to write down the long exact sequence of the pair $\left(X,\gamma\right)$:
    \begin{prop}\label{prop:les}
        Let $X$ be a topological space and $\gamma:\coprod S^1\rightarrow X$.
        Then there is an exact sequence
        \[
            \textstyle 0\rightarrow H_2\left(X;R\right)\rightarrow H_2\left(X,\gamma;R\right)\xrightarrow{\partial}H_1\left(\coprod S^1;R\right)\xrightarrow{\gamma_*}H_1\left(X;R\right).
        \]
    \end{prop}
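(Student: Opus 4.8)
The plan is to read off the sequence directly from the long exact sequence in singular homology of the pair $\left(X_\gamma,\coprod S^1\right)$, using the standard properties of the mapping cylinder.

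First I would record the relevant geometry of $X_\gamma$. The mapping cylinder deformation retracts onto the copy of $X$ inside it: the homotopy $\left(u,s\right)\mapsto\left(u,(1-t)s\right)$ on each cylinder $S^1\times[0,1]$ (and constant on $X$) is, at time $t=1$, a retraction $r:X_\gamma\rightarrow X$ sending $\left(u,s\right)\mapsto\gamma(u)$, and it is homotopic to $\mathrm{id}_{X_\gamma}$ rel $X$. Hence the inclusion $X\hookrightarrow X_\gamma$ is a homotopy equivalence, inducing isomorphisms $H_n\left(X;R\right)\xrightarrow{\cong}H_n\left(X_\gamma;R\right)$ for every $n$. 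Moreover, if $\iota:\coprod S^1\hookrightarrow X_\gamma$ denotes the inclusion $u\mapsto(u,1)$ at level $1$, then $r\circ\iota=\gamma$ literally; so under the identification $H_1\left(X_\gamma;R\right)\cong H_1\left(X;R\right)$, the map $\iota_*:H_1\left(\coprod S^1;R\right)\rightarrow H_1\left(X_\gamma;R\right)$ becomes exactly $\gamma_*:H_1\left(\coprod S^1;R\right)\rightarrow H_1\left(X;R\right)$.

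Next I would write down the tail of the long exact sequence of the pair (valid for any pair of topological spaces):
\[
    \textstyle H_2\left(\coprod S^1;R\right)\rightarrow H_2\left(X_\gamma;R\right)\rightarrow H_2\left(X_\gamma,\coprod S^1;R\right)\xrightarrow{\partial}H_1\left(\coprod S^1;R\right)\xrightarrow{\iota_*}H_1\left(X_\gamma;R\right).
\]
Since $\coprod S^1$ is a finite disjoint union of circles, $H_2\left(\coprod S^1;R\right)=0$, so the arrow $H_2\left(X_\gamma;R\right)\rightarrow H_2\left(X_\gamma,\coprod S^1;R\right)$ is injective and the sequence may be prepended with a $0$. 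Substituting $H_n\left(X_\gamma;R\right)\cong H_n\left(X;R\right)$, using $H_*\left(X_\gamma,\coprod S^1;R\right)=H_*\left(X,\gamma;R\right)$ by definition, and replacing $\iota_*$ by $\gamma_*$ via the previous paragraph, yields precisely the claimed four-term exact sequence.

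There is no real obstacle here: the only point needing a word of care is the naturality statement that $\iota_*$ corresponds to $\gamma_*$ under the homotopy equivalence $X_\gamma\simeq X$, which is immediate from $r\circ\iota=\gamma$. Everything else is the long exact sequence of a pair together with the vanishing of $H_2$ of a disjoint union of circles.
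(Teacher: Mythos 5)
Your proof is correct and is essentially the paper's argument, just written out in more detail: the paper likewise invokes the long exact sequence of the pair $\left(X_\gamma,\coprod S^1\right)$ together with the deformation retraction of $X_\gamma$ onto $X$ (citing Hatcher), and your explicit identification of $\iota_*$ with $\gamma_*$ via $r\circ\iota=\gamma$ and the vanishing of $H_2\left(\coprod S^1;R\right)$ are exactly the points left implicit there.
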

    \begin{proof}
        This is simply the long exact sequence of the pair $\left(X_\gamma,\coprod S^1\right)$, together with the fact that $X_\gamma$ deformation retracts to $X$ \cite{hatcher}*{p.2}.
    \end{proof}
    
    We now compute $H_*\left(X,\gamma\right)$ in a few special cases:
    \begin{ex}
        \begin{enumerate}
            \item If $\gamma$ is the empty collection of loops, then\label{ex:rel-hom-chn-3}
            \[
                H_*\left(X,\gamma;R\right)\cong H_*\left(X;R\right).
            \]
            \item If $\gamma$ is an embedding $\coprod S^1\hookrightarrow X$, then the pair $\left(X_\gamma,\coprod S^1\right)$ deformation retracts onto $\left(X,\gamma\left(\coprod S^1\right)\right)$, and therefore there is an isomorphism
            \[
                \textstyle H_*\left(X,\gamma;R\right)\cong H_*\left(X,\gamma\left(\coprod S^1\right);R\right).
            \]
            In general, there is still a morphism $H_*\left(X,\gamma\right)\rightarrow H_*\left(X,\gamma\left(\coprod S^1\right)\right)$ given by collapsing the mapping cylinder, but this might not be an isomorphism, as shown for instance by item \ref{ex:rel-hom-chn-2} below.
            \item If $\gamma:S^1\rightarrow X$ is a contractible loop, then the quotient $X_\gamma/S^1$ is homotopy equivalent to $X\vee S^2$, and collapsing the pair gives\label{ex:rel-hom-chn-2}
                \[
                    \textstyle H_*\left(X,\gamma;R\right)\cong H_*\left(X;R\right)\oplus H_*\left(S^2;R\right).
                \]
            \item Suppose that $\gamma:S^1\rightarrow X$ is rationally homologically trivial, in the sense that $\gamma_*:H_1\left(S^1;\mathbb{Q}\right)\rightarrow H_1\left(X;\mathbb{Q}\right)$ vanishes.
            Then the map $\gamma_*:H_1\left(S^1;R\right)\rightarrow H_1\left(X;R\right)$ in the exact sequence of Proposition \ref{prop:les} has kernel $q\left[S^1\right]$ for some $q\in R$, so the image of the boundary map $\partial$ is isomorphic to $R$, which gives a split short exact sequence, and an isomorphism\label{ex:rel-hom-chn-4}
            \[
                H_2\left(X,\gamma;R\right)\cong H_2\left(X;R\right)\oplus R.
            \]
        \end{enumerate}
        \label{ex:rel-hom-chn}
    \end{ex}
    
    Note that there is a natural isomorphism
    \[
        H_*\left(X,\gamma;\mathbb{R}\right)\cong H_*\left(X,\gamma;\mathbb{Q}\right)\otimes_\mathbb{Q}\mathbb{R},
    \]
    allowing us to view $H_*\left(X,\gamma;\mathbb{Q}\right)$ as a subset of $H_*\left(X,\gamma;\mathbb{R}\right)$.
    We will say that $\alpha\in H_*\left(X,\gamma;\mathbb{Z}\right)$ is an \emph{integral class}, while $\alpha\in H_*\left(X,\gamma;\mathbb{Q}\right)$ is \emph{rational} and $\alpha\in H_*\left(X,\gamma;\mathbb{R}\right)$ is \emph{real}.
    
    \begin{defi}\label{def:relhom-gp}
        Let $G$ be a group, and let $X$ be a $K(G,1)$ space.
        Given $[c]\in C_1^{\hmtp}\left(G;\mathbb{Z}\right)$, we set
        \[
            H_*\left(G,[c];R\right)\coloneqq H_*\left(X,\gamma;R\right),
        \]
        where $\gamma:\coprod S^1\rightarrow X$ is a map representing $[c]$ as explained in \S{}\ref{subsec:std-form-chns}.
    \end{defi}
    
    Note that the homotopy type of $X$ is uniquely defined by $G$, and the free homotopy class of $\gamma$ is determined by $[c]$, so the group $H_*\left(G,[c];R\right)$ only depends on $G$ and the class $[c]$.
    
    The case where $[c]=0$ corresponds to $\gamma$ being the empty collection of loops, and so $H_*\left(G,0;R\right)\cong H_*(G;R)$ by Example \ref{ex:rel-hom-chn}\ref{ex:rel-hom-chn-3}.

    \subsection{Rational points in real vector spaces}
    
    The difference between {real} and {rational} classes in $H_2\left(X,\gamma\right)$ will play a role in the sequel, and we make a brief digression to introduce some general terminology related to this.
    
    \begin{defi}
        Let $V$ be a $\mathbb{R}$-vector space. A \emph{rational structure} on $V$ is the choice of an equivalence class of bases of $V$, where two bases are considered equivalent if each vector of one basis has rational coordinates in the second basis.
        Any basis in the equivalence class is called a \emph{rational basis}.
        
        Given a rational structure on $V$, a \emph{rational point} is a vector of $V$ that has rational coordinates in a rational basis.
        The set $V_\mathbb{Q}$ of rational points of $V$ is naturally a $\mathbb{Q}$-vector space, and satisfies $V=V_\mathbb{Q}\otimes_\mathbb{Q}\mathbb{R}$.
        In fact, a rational structure on $V$ can be defined equivalently as the choice of a $\mathbb{Q}$-subspace $V_\mathbb{Q}$ of $V$ such that $V=V_\mathbb{Q}\otimes_\mathbb{Q}\mathbb{R}$.
    \end{defi}
    
    \begin{ex}
        The space $\mathbb{R}^n$ has a rational structure given by the equivalence class of the standard basis, and its set of rational points is $\mathbb{Q}^n$.
    \end{ex}
    
    A \emph{rational subspace} $W$ of $V$ is a $\mathbb{R}$-subspace spanned by rational points.
    It naturally inherits a rational structure from $V$.
    
    If $V$ and $W$ are $\mathbb{R}$-vector spaces equipped with rational structures, a \emph{rational linear map} $f:V\rightarrow W$ is a linear map such that the image of each vector in a rational basis of $V$ has rational coordinates in a rational basis of $W$.
    This implies that the kernel and the image of $f$ are rational subspaces of $V$ and $W$ respectively.
    
    Let $C_*^\mathbb{Q}$ be a chain complex over $\mathbb{Q}$ and let $C_*^\mathbb{R}=C_*^\mathbb{Q}\otimes_\mathbb{Q}\mathbb{R}$.
    Hence, each vector space $C_n^\mathbb{R}$ has a rational structure whose set of rational points is $C_n^\mathbb{Q}$.
    The boundary map $d_n:C_n^\mathbb{R}\rightarrow C_{n-1}^\mathbb{R}$ is rational, and the space $Z_n^\mathbb{R}=\Ker d_n$ of $n$-cycles is a rational subspace.
    In particular, the set of rational points of $Z_n^\mathbb{R}$ is the space $Z_n^\mathbb{Q}$ of $n$-cycles for $C_*^\mathbb{Q}$.
    Moreover, there is an isomorphism
    \[
        H_n\left(C_*^\mathbb{R}\right)\cong H_n\left(C_*^\mathbb{Q}\right)\otimes_\mathbb{Q}\mathbb{R},
    \]
    giving $H_n\left(C_*^\mathbb{R}\right)$ a rational structure whose set of rational points is $H_n\left(C_*^\mathbb{Q}\right)$.
    
    The following lemma says that any real cycle representing a rational homology class can be approximated by a rational cycle:
    
    \begin{lmm}[Rational approximation in homology]\label{lmm:rat-approx-homol}
        Let $C_*^\mathbb{Q}$ be a chain complex over $\mathbb{Q}$ and let $C_*^\mathbb{R}=C_*^\mathbb{Q}\otimes_\mathbb{Q}\mathbb{R}$.
        Let $\normm{\cdot}$ be a norm on $C_*^\mathbb{R}$.
        Consider a real $n$-cycle $a\in Z_n^\mathbb{R}$ whose homology class $[a]$ is rational:
        \[
            [a]\in H_n\left(C_*^\mathbb{Q}\right)\subseteq H_n\left(C_*^\mathbb{R}\right).
        \]
        Then for any $\varepsilon>0$, there exists a rational $n$-cycle $a'\in Z_n^\mathbb{Q}$ such that
        \begin{itemize}
            \item $[a]=\left[a'\right]$ in $H_n\left(C_*^\mathbb{R}\right)$, and
            \item $\normm{a-a'}\leq\varepsilon$.
        \end{itemize}
    \end{lmm}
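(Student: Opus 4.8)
The plan is to separate the chain complex into its rational part and its real part by choosing a rational basis adapted to the cycle structure. First I would fix, in each degree $n$, a rational basis of $C_n^\mathbb{R}$ whose elements are rational points, and — using that $d_n$ is a rational linear map — refine it so that it is compatible with the filtration $B_n^\mathbb{R}\subseteq Z_n^\mathbb{R}\subseteq C_n^\mathbb{R}$. More precisely, I would pick rational bases $\mathcal{B}_n$ of $B_n^\mathbb{R}$, extend to rational bases $\mathcal{Z}_n$ of $Z_n^\mathbb{R}$ (possible since $Z_n^\mathbb{R}$ is a rational subspace), and extend further to a rational basis of $C_n^\mathbb{R}$ by adjoining vectors $\mathcal{C}_n$ chosen so that $d_n$ maps $\mathcal{C}_n$ bijectively onto $\mathcal{B}_{n-1}$ (again using rationality of $d_n$ and the fact that $d_n$ induces an isomorphism $C_n^\mathbb{R}/Z_n^\mathbb{R}\xrightarrow{\sim}B_{n-1}^\mathbb{R}$ carrying rational points to rational points). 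Concretely: since $\{d_n c : c\in\mathcal{C}_n\}$ is a rational basis of $B_{n-1}^\mathbb{R}$, this is what it means to ``choose'' $\mathcal{B}_{n-1}$.

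With such an adapted rational basis in hand, the proof becomes elementary. Write the given cycle $a\in Z_n^\mathbb{R}$ in coordinates with respect to $\mathcal{Z}_n$: $a=\sum_j \mu_j z_j$ with $z_j\in\mathcal{Z}_n$ and $\mu_j\in\mathbb{R}$. The hypothesis that $[a]$ is rational in $H_n(C_*^\mathbb{R})=H_n(C_*^\mathbb{Q})\otimes_\mathbb{Q}\mathbb{R}$ means precisely that the image of $a$ in $Z_n^\mathbb{R}/B_n^\mathbb{R}\cong H_n(C_*^\mathbb{R})$ is a rational point, i.e.\ that the coordinates $\mu_j$ corresponding to the basis vectors of $\mathcal{Z}_n$ lying outside $B_n^\mathbb{R}$ (the ones representing a basis of homology) are rational, while the coordinates on $\mathcal{B}_n$ may be arbitrary reals. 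I then define $a'$ by replacing each of the ``$\mathcal{B}_n$-coordinates'' $\mu_j$ of $a$ by a rational number $\mu_j'$ with $|\mu_j-\mu_j'|$ small, and leaving the remaining (already rational) coordinates unchanged. Then $a'\in Z_n^\mathbb{Q}$ since all its coordinates are rational and $\mathcal{Z}_n$ spans $Z_n^\mathbb{R}$; and $a-a'\in B_n^\mathbb{R}$ by construction, so $[a]=[a']$ in $H_n(C_*^\mathbb{R})$; and $\normm{a-a'}\leq \sum_j |\mu_j-\mu_j'|\,\normm{z_j}$ can be made $\leq\varepsilon$ by taking the $\mu_j'$ close enough (a finite sum, so this is just an $\varepsilon/(\text{number of terms})$ argument using density of $\mathbb{Q}$ in $\mathbb{R}$, and $\normm{\cdot}$ being a norm on the finite-dimensional — or at least on the relevant finitely-generated — subspace spanned by the $z_j$).

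The main obstacle, such as it is, lies entirely in the first paragraph: one must be careful that the bases chosen are genuinely \emph{rational} bases in the sense of the definition, i.e.\ that all the extension and lifting steps can be performed over $\mathbb{Q}$ rather than merely over $\mathbb{R}$. This is exactly where the hypothesis that $C_*^\mathbb{R}=C_*^\mathbb{Q}\otimes_\mathbb{Q}\mathbb{R}$ and that $d_n$ is a rational linear map is used: all the relevant subspaces ($B_n^\mathbb{R}$, $Z_n^\mathbb{R}$) are rational subspaces with rational points $B_n^\mathbb{Q}$, $Z_n^\mathbb{Q}$, and the needed bases can therefore be chosen inside $C_n^\mathbb{Q}$. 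I should also note that the chain complex need not be finitely generated in each degree; but only finitely many basis vectors appear with nonzero coefficient in $a$, so the whole argument takes place inside a finitely generated subcomplex — or more simply, one just works with the finite set of indices $j$ for which $\mu_j\neq 0$ together with the (finitely many) basis vectors needed to span the relevant piece of $B_n^\mathbb{R}$, so no finiteness hypothesis is required. Once the adapted basis is set up, everything else is the routine $\varepsilon$-argument sketched above.
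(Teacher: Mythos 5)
Your argument is correct and amounts to essentially the same proof as the paper's (which follows Calegari): there, one observes that the projection $p\colon Z_n^\mathbb{R}\rightarrow H_n\left(C_*^\mathbb{R}\right)$ is rational, so the fibre $p^{-1}\left([a]\right)$ is a rational affine subspace whose rational points are dense after restricting to a finite-dimensional rational subspace containing $a$, and one picks a rational $a'$ in this fibre close to $a$. Your adapted rational basis of $Z_n^\mathbb{R}$ extending one of $B_n^\mathbb{R}$ just implements that density statement explicitly by perturbing the $B_n$-coordinates (the further extension to $C_n^\mathbb{R}$ via $\mathcal{C}_n$ compatible with $d_n$ is never actually used).
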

    \begin{proof}
        We follow an argument of Calegari \cite{cal-scl}*{Remark 1.5}.
        Observe that the natural projection map
        \[
            p:Z_n^\mathbb{R}\rightarrow H_n\left(C_*^\mathbb{R}\right)
        \]
        is rational.
        Hence, since $[a]$ is a rational point of $H_n\left(C_*^\mathbb{R}\right)$, the affine subspace $p^{-1}([a])$ is rational in $Z_n^\mathbb{R}$, so its rational points are contained in $Z_n^\mathbb{Q}$.
        We may assume that $Z_n^\mathbb{R}$ is finite-dimensional by restricting to a finite-dimensional rational subspace containing $a$; hence rational points are dense.
        Since the real $n$-cycle $a$ lies in $p^{-1}([a])$, there is $a'\in p^{-1}([a])$ rational arbitrarily close to $a$ for $\normm{\cdot}$.
        This rational $n$-cycle $a'$ lies in $Z_n^\mathbb{Q}$ and is homologous to $a$ as wanted.
    \end{proof}

    \subsection{The Gromov seminorm as an \texorpdfstring{$\ell^1$}{l1}-seminorm}
    
    We now give a first definition of the Gromov seminorm.
    
    Given $\gamma:\coprod S^1\rightarrow X$, recall that $H_*\left(X_\gamma;\mathbb{R}\right)$ is the homology of the singular chain complex $C_*^\textnormal{sg}\left(X_\gamma;\mathbb{R}\right)$.
    Each $\mathbb{R}$-vector space $C_n^\textnormal{sg}\left(X_\gamma;\mathbb{R}\right)$ can be equipped with the $\ell^1$-norm defined by
    \[
        \normm{\sum_{\sigma}\lambda_\sigma\sigma}_1\coloneqq\sum_\sigma\left|\lambda_\sigma\right|,
    \]
    with $\lambda_\sigma\in\mathbb{R}$ for each singular $n$-simplex $\sigma:\Delta^n\rightarrow X_\gamma$.
    The quotient
    \[
        \textstyle C_n^\textnormal{sg}\left(X_\gamma,\coprod S^1;\mathbb{R}\right)\coloneqq C_n^\textnormal{sg}\left(X_\gamma;\mathbb{R}\right)/C_n^\textnormal{sg}\left(\coprod S^1;\mathbb{R}\right)
    \]
    inherits a quotient seminorm that we also denote by $\normm{\cdot}_1$, and that is defined by
    \[
        \normm{a}_1\coloneqq\inf_{\underline{a}\in a}\normm{\underline{a}}_1,
    \]
    where the infimum is over all absolute $n$-chains $\underline{a}\in C_n^\textnormal{sg}\left(X_\gamma;\mathbb{R}\right)$ representing $a\in C_n^\textnormal{sg}\left(X_\gamma,\coprod S^1;\mathbb{R}\right)$.
    The restriction of $\normm{\cdot}_1$ defines a seminorm on the subspace $Z_n^\textnormal{sg}\left(X_\gamma,\coprod S^1;\mathbb{R}\right)$ of relative $n$-cycles, which descends to a seminorm, still denoted by $\gromnorm{\cdot}$, on homology:
    
    \begin{defi}
        Let $X$ be a topological space and $\gamma:\coprod S^1\rightarrow X$.
        The \emph{relative Gromov seminorm} on $H_n\left(X,\gamma;\mathbb{R}\right)$ is defined by
        \[
            \gromnorm{\alpha}\coloneqq\inf\left\{\gromnorm{a}\st{} a\in Z_n^\textnormal{sg}\left(X_\gamma,\textstyle\coprod S^1;\mathbb{R}\right),\:{[a]}=\alpha\right\}.
        \]
    \end{defi}
    
    \begin{rmk}\label{rmk:defi-grnorm-gp}
        Given a countable group $G$ and an integral conjugacy class $[c]\in C_1^{\hmtp}\left(G;\mathbb{Z}\right)$, the relative homology $H_2\left(G,[c];\mathbb{R}\right)$ is by definition $H_2\left(X,\gamma;\mathbb{R}\right)$, where $X$ is a $K(G,1)$, which can be chosen to be a countable CW-complex since $G$ is countable, and $\gamma:\coprod S^1\rightarrow X$ represents $[c]$ --- see Definition \ref{def:relhom-gp}.
        If $X'$ is another choice of (countable) $K(G,1)$ and $\gamma':\coprod S^1\rightarrow X'$ is another map representing $[c]$, then there is a homotopy equivalence $h:X\xrightarrow{\simeq}X'$ sending $\gamma$ to $h\gamma$, and a free homotopy between $h\gamma$ and $\gamma'$.
        Hence, there are induced homotopy equivalences of pairs
        \[
            \textstyle\left(X_\gamma,\coprod S^1\right)\simeq\left(X'_{h\gamma},\coprod S^1\right)\simeq\left(X'_{\gamma'},\coprod S^1\right).
        \]
        Since $\gromnorm{\cdot}$ is invariant under homotopy equivalence for countable CW-complexes, and in fact under any map inducing an isomorphism of fundamental groups\footnote{This follows from Gromov's Mapping Theorem \cite{frigerio}*{Corollary 5.11}, together with the duality principle between $\ell^1$-homology and bounded cohomology \cite{frigerio}*{Corollary 6.2}.}, the above homotopy equivalences induce isometric isomorphisms
        \[
            \textstyle H_2\left(X,\gamma;\mathbb{R}\right)\cong H_2\left(X',h\gamma;\mathbb{R}\right)\cong H_2\left(X',\gamma';\mathbb{R}\right).
        \]
        Hence, one can extend the definition of the Gromov seminorm to $H_2\left(G,[c];\mathbb{R}\right)$.
    \end{rmk}
    
    The above definitions still make sense if $\mathbb{R}$ is replaced with $\mathbb{Q}$ everywhere.
    Given $\alpha\in H_n\left(X,\gamma;\mathbb{Q}\right)\subseteq H_n\left(X,\gamma;\mathbb{R}\right)$, it is natural to ask whether the Gromov seminorm of $\alpha$ as a {rational class} coincides with its Gromov seminorm as a {real class}.
    The following lemma gives an affirmative answer:
    
    \begin{lmm}[Equality of the rational and real Gromov seminorms]\label{lmm:l1-rat-classes}
        Let $X$ be a topological space and $\gamma:\coprod S^1\rightarrow X$.
        Given a rational class $\alpha\in H_n\left(X,\gamma;\mathbb{Q}\right)$, the Gromov seminorm of $\alpha$ (seen as a real class) can be computed over rational cycles:
        \[
            \gromnorm{\alpha}=\inf\left\{\gromnorm{a}\st{} a\in Z_n^\textnormal{sg}\left(X_\gamma,\textstyle\coprod S^1;\mathbb{Q}\right),\:{[a]}=\alpha\right\}
        \]
    \end{lmm}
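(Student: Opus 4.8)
The plan is to deduce the statement from Lemma \ref{lmm:rat-approx-homol}, applied to the relative singular chain complex of the pair $\left(X_\gamma,\coprod S^1\right)$. One inequality is free: any rational relative cycle representing $\alpha$ is in particular a real one, so the infimum on the right-hand side is taken over a subset of the cycles occurring in the definition of $\gromnorm{\alpha}$, and is therefore $\geq\gromnorm{\alpha}$. It remains to prove the reverse inequality; for this it suffices to show that any real relative cycle representing $\alpha$ can be approximated, in the seminorm $\gromnorm{\cdot}$, by a \emph{rational} relative cycle still representing $\alpha$.

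To set this up, one observes that $C_*^{\textnormal{sg}}\left(X_\gamma,\coprod S^1;\mathbb{Q}\right)$ is a chain complex over $\mathbb{Q}$, and that $C_*^{\textnormal{sg}}\left(X_\gamma,\coprod S^1;\mathbb{Q}\right)\otimes_\mathbb{Q}\mathbb{R}\cong C_*^{\textnormal{sg}}\left(X_\gamma,\coprod S^1;\mathbb{R}\right)$, since $C_*^{\textnormal{sg}}\left(X_\gamma;\mathbb{Q}\right)\otimes_\mathbb{Q}\mathbb{R}=C_*^{\textnormal{sg}}\left(X_\gamma;\mathbb{R}\right)$ and $-\otimes_\mathbb{Q}\mathbb{R}$ is right exact. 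We are thus exactly in the framework of the discussion on rational structures: the space of real relative $n$-chains carries a rational structure whose rational points are the rational relative $n$-chains, the subspace $Z_n^{\textnormal{sg}}\left(X_\gamma,\coprod S^1;\mathbb{R}\right)$ of relative cycles is a rational subspace, and $H_n\left(X,\gamma;\mathbb{R}\right)$ inherits a rational structure whose set of rational points is $H_n\left(X,\gamma;\mathbb{Q}\right)$; in particular $\alpha$ is a rational point of $H_n\left(X,\gamma;\mathbb{R}\right)$. Now, given $\varepsilon>0$, choose a real relative cycle $a$ with $[a]=\alpha$ and $\gromnorm{a}\leq\gromnorm{\alpha}+\varepsilon$. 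Applying Lemma \ref{lmm:rat-approx-homol} to this chain complex, with $\gromnorm{\cdot}$ in the role of the norm, produces a rational relative cycle $a'$ with $[a']=[a]$ in $H_n\left(X,\gamma;\mathbb{R}\right)$ --- hence $[a']=\alpha$ in $H_n\left(X,\gamma;\mathbb{Q}\right)$, the latter being identified with the set of rational points of the former --- and $\gromnorm{a-a'}\leq\varepsilon$. Then $\gromnorm{a'}\leq\gromnorm{a}+\gromnorm{a-a'}\leq\gromnorm{\alpha}+2\varepsilon$, and letting $\varepsilon\to0$ yields the reverse inequality.

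The one point requiring care --- and the only genuine obstacle --- is that Lemma \ref{lmm:rat-approx-homol} is stated for a norm, whereas the relative $\ell^1$-seminorm is only a seminorm in general (it vanishes, for instance, on the $\mathbb{R}$-summand of Example \ref{ex:rel-hom-chn}\ref{ex:rel-hom-chn-2}). This is harmless: in the proof of Lemma \ref{lmm:rat-approx-homol}, the norm intervenes only to approximate a point of a finite-dimensional rational affine subspace by rational points of that subspace, and any seminorm on a finite-dimensional real vector space is continuous (it is dominated by a constant multiple of the Euclidean norm), so the argument carries over verbatim. One therefore either records that Lemma \ref{lmm:rat-approx-homol} remains valid for seminorms, or inserts this one-line justification at the point where it is used.
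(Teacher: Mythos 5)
Your proposal is correct and takes essentially the same route as the paper, which proves this lemma by simply invoking Lemma \ref{lmm:rat-approx-homol}; your write-up is the detailed version of that deduction (trivial inequality one way, rational approximation of a near-optimal real cycle the other way). Your final worry about norm versus seminorm is actually unnecessary: the quotient $\ell^1$-seminorm on relative \emph{chains} is a genuine norm, because $C_n^\textnormal{sg}\left(\textstyle\coprod S^1;\mathbb{R}\right)$ is spanned by a subset of the basis of singular simplices, so the quotient is isometrically the $\ell^1$-space on the remaining simplices (it is only on homology that one gets a mere seminorm) --- though your fallback observation that any seminorm on a finite-dimensional space is continuous would also suffice.
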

    \begin{proof}
        This follows from Lemma \ref{lmm:rat-approx-homol}.
    \end{proof}
    
    In other words, Lemma \ref{lmm:l1-rat-classes} says that the inclusion $H_n\left(X,\gamma;\mathbb{Q}\right)\hookrightarrow H_n\left(X,\gamma;\mathbb{R}\right)$ is an isometric embedding if $H_n\left(X,\gamma;\mathbb{Q}\right)$ and $H_n\left(X,\gamma;\mathbb{R}\right)$ are equipped with the rational and real Gromov seminorms respectively.

    \subsection{Topological interpretation of the Gromov seminorm}\label{subsec:top-int}
    
    Analogously to (and motivated by) the topological interpretation of stable commutator length in terms of surfaces projectively bounding a given loop \cite{cal-scl}*{\S{}2.6}, we now give a topological interpretation of the Gromov seminorm for rational classes in $H_2$.
    This extends the topological interpretation of the absolute Gromov seminorm on $H_2$ \cite{cal-scl}*{\S{}1.2.5}.
    
    An \emph{admissible surface} for $\gamma:\coprod S^1\rightarrow X$ is the data of an oriented compact (possibly disconnected) surface $\Sigma$, and of maps $f:\Sigma\rightarrow X$ and $\partial f:\partial\Sigma\rightarrow\coprod S^1$ making the following diagram commute:
    \begin{equation*}
        \vcenter{\hbox{\begin{tikzpicture}[every node/.style={draw=none,fill=none,rectangle}]
            \node (A) at (0,1.5) {$\partial\Sigma$};
            \node (B) at (2,1.5) {$\Sigma$};
            \node (Ap) at (0,0) {$\coprod S^1$};
            \node (Bp) at (2,0) {$X$};
            
            \draw [->] (A) -> (B) node [midway,above] {$\iota$};
            \draw [->] (Ap) -> (Bp) node [midway,above] {$\gamma$};
            \draw [->] (A) -> (Ap) node [midway,left] {$\partial f$};
            \draw [->] (B) -> (Bp) node [midway,left] {$f$};
        \end{tikzpicture}}}
        \label{eq:comm-diag-adm-surf}
    \end{equation*}
    where $\iota:\partial\Sigma\hookrightarrow\Sigma$ is the inclusion.
    Such an admissible surface will be denoted by $f:\left(\Sigma,\partial\Sigma\right)\rightarrow\left(X,\gamma\right)$.
    
    Let $\Sigma_\iota$ be the mapping cylinder of the inclusion map $\iota:\partial\Sigma\hookrightarrow\Sigma$:
    \[
        \Sigma_\iota\coloneqq\left(\Sigma\amalg\left(\partial\Sigma\times[0,1]\right)\right)/\sim,
    \]
    where $\sim$ is the equivalence relation generated by $(u,0)\sim\iota(u)$ for $u\in\partial\Sigma$.
    Hence, there is a natural map of pairs 
    \[
        \textstyle f_\#:\left(\Sigma_\iota,\partial\Sigma\times\{1\}\right)\rightarrow\left(X_\gamma,\coprod S^1\right)
    \]
    defined by $f$ and $\partial f$ --- see \S{}\ref{subsec:rel-hom} for the definition of the pair $\left(X_\gamma,\coprod S^1\right)$.
    Since the pair $\left(\Sigma_\iota,\partial\Sigma\times\{1\}\right)$ deformation retracts to $\left(\Sigma,\partial\Sigma\right)$, the map $f_\#$ induces a morphism
    \[
        f_*:H_*\left(\Sigma,\partial\Sigma\right)\rightarrow H_*\left(X,\gamma\right).
    \]
    
    In particular, $f$ represents a class $f_*[\Sigma]\in H_2\left(X,\gamma\right)$, where $[\Sigma]\in H_2\left(\Sigma,\partial\Sigma\right)$ is the (integral, rational, or real) fundamental class of $\Sigma$.
    
    The topological complexity of a compact surface $\Sigma$ will be measured by its \emph{reduced Euler characteristic}, defined by $\chi^-(\Sigma)=\sum_K\min\left\{0,\chi(K)\right\}$, where the sum is over all connected components $K$ of $\Sigma$.
    
    \begin{prop}[Topological interpretation of the Gromov seminorm]\label{prop:ell1-gromnorm}
        Let $X$ be a topological space and $\gamma:\coprod S^1\rightarrow X$.
        If $\alpha\in H_2\left(X,\gamma;\mathbb{Q}\right)$ is a rational class, then there is an equality
        \[
            \gromnorm{\alpha}=\inf_{f,\Sigma}\frac{-2\chi^-(\Sigma)}{n(\Sigma)},
        \]
        where the infimum is taken over all admissible surfaces $f:\left(\Sigma,\partial\Sigma\right)\rightarrow\left(X,\gamma\right)$ such that $f_*[\Sigma]=n(\Sigma)\alpha$ for some $n(\Sigma)\in\mathbb{N}_{\geq1}$.
    \end{prop}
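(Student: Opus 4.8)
The plan is to prove the two inequalities of the asserted equality separately, adapting the arguments for the absolute Gromov seminorm in \cite{cal-scl}*{\S{}1.2.5} and for $\scl$ in \cite{cal-scl}*{\S{}2.6}.

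\emph{Surfaces bound the seminorm from above.} Given an admissible surface $f:\left(\Sigma,\partial\Sigma\right)\rightarrow\left(X,\gamma\right)$ with $f_*[\Sigma]=n\alpha$ for some $n=n(\Sigma)\geq1$, the key input is the computation of the relative Gromov seminorm of the fundamental class of a compact oriented surface, $\gromnorm{[\Sigma,\partial\Sigma]}\leq-2\chi^-(\Sigma)$, which follows by additivity over components: a component of negative Euler characteristic admits an ideal triangulation with $-2\chi$ triangles (equivalently, one uses a hyperbolic structure with geodesic boundary and Gauss--Bonnet), while a disc, sphere, annulus or torus has relative Gromov seminorm $0$ because each of these pairs carries self-maps of arbitrarily large degree, such as $z\mapsto z^d$. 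Since the chain map $f_\#$ on singular chains induced by the map of pairs $\left(\Sigma_\iota,\partial\Sigma\times\{1\}\right)\rightarrow\left(X_\gamma,\coprod S^1\right)$ does not increase the $\ell^1$-(semi)norm, and $\left(\Sigma_\iota,\partial\Sigma\times\{1\}\right)$ is homotopy equivalent to $\left(\Sigma,\partial\Sigma\right)$, we get $n\gromnorm{\alpha}=\gromnorm{f_*[\Sigma]}\leq\gromnorm{[\Sigma,\partial\Sigma]}\leq-2\chi^-(\Sigma)$. Dividing by $n$ and taking the infimum over admissible surfaces yields $\gromnorm{\alpha}\leq\inf_{f,\Sigma}\frac{-2\chi^-(\Sigma)}{n(\Sigma)}$.

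\emph{Cycles produce surfaces.} For the reverse inequality, fix $\varepsilon>0$. By Lemma \ref{lmm:l1-rat-classes}, $\alpha$ is represented by a rational relative cycle of $\ell^1$-seminorm at most $\gromnorm{\alpha}+\varepsilon$; lifting it to a rational absolute $2$-chain on $X_\gamma$ of nearly the same norm (rational chains on $\coprod S^1$ being dense) and clearing denominators, I obtain an integer $n\geq1$ and an integral absolute $2$-chain $\underline{a}=\sum_{j=1}^N\eta_j\sigma_j$ on $X_\gamma$, with $\eta_j=\pm1$, the $\sigma_j:\Delta^2\rightarrow X_\gamma$ not necessarily distinct and none mapping entirely into $\coprod S^1$, whose image in relative chains is a relative cycle representing $n\alpha$, with $N\leq n\left(\gromnorm{\alpha}+\varepsilon\right)$. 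Now I glue: take $N$ triangles, one per $\sigma_j$, oriented by $\eta_j$; call an edge of a triangle \emph{boundary} if the corresponding face of $\sigma_j$ maps into $\coprod S^1$ and \emph{interior} otherwise. Since the relative boundary of $\underline{a}$ vanishes, for every interior $1$-simplex $\tau$ of $X_\gamma$ the interior edges mapping to $\tau$ carry equally many $+1$ and $-1$ incidences --- here one uses that a vanishing $\pm1$-sum over $\mathbb{Z}$ must have equal numbers of $+1$'s and $-1$'s, so no further multiple of $\underline{a}$ is needed --- hence these edges can be glued in orientation-compatible pairs. Leaving the boundary edges free produces a compact oriented pseudo-surface mapping to $X_\gamma$ with free boundary inside $\coprod S^1$; its only singularities are isolated vertex pinch points, and resolving them gives a genuine compact oriented surface $\Sigma$ with a map $g:\Sigma\rightarrow X_\gamma$ sending $\partial\Sigma$ into $\coprod S^1$. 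Setting $\partial f=g|_{\partial\Sigma}$ and $f=r\circ g$, where $r:X_\gamma\rightarrow X$ is the deformation retraction (which restricts to $\gamma$ on $\coprod S^1$), produces an admissible surface; and since $g$ takes the fundamental cycle $\sum_j\eta_jT_j$ of $\Sigma$ to $\underline{a}$, we get $f_*[\Sigma]=n\alpha$ with $n(\Sigma)=n$.

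It then remains to bound $-2\chi^-(\Sigma)$ by $N$. Let $\Sigma_{<0}$ be the union of the components of $\Sigma$ with negative Euler characteristic, built from $N'\leq N$ of the triangles and having $E_\partial$ boundary edges and $V$ vertices. Counting edge-sides ($3$ per triangle, with each interior edge in two triangles) gives $\left(3N'+E_\partial\right)/2$ edges, and since the boundary is a disjoint union of circles one has $V\geq E_\partial$; hence $-2\chi^-(\Sigma)=-2\chi(\Sigma_{<0})=N'+E_\partial-2V\leq N'\leq N\leq n\left(\gromnorm{\alpha}+\varepsilon\right)$. Therefore $\inf_{f,\Sigma}\frac{-2\chi^-(\Sigma)}{n(\Sigma)}\leq\gromnorm{\alpha}+\varepsilon$, and letting $\varepsilon\rightarrow0$ completes the proof.

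The main obstacle is the passage from a cycle to a surface: once one knows that clearing denominators already allows the faces to be paired, the delicate points are resolving the vertex pinch singularities to an honest surface while preserving the homology class and matching up with the strict mapping-cylinder definition of an admissible surface, and carrying out the Euler-characteristic estimate on the negative part $\Sigma_{<0}$ alone --- which is exactly what makes the bound come out in terms of $\chi^-$ rather than $\chi$ and lets one ignore the disc, sphere, annulus and torus components.
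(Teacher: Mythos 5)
Your proposal is correct and follows essentially the same route as the paper: the upper bound by pushing the fundamental class forward and using that $\normm{[\Sigma]}_1=-2\chi^-(\Sigma)$, and the lower bound by clearing denominators in a rational relative cycle and gluing the resulting triangles along matching edges (with opposite signed incidences) to build an admissible surface representing $n\alpha$. The only differences are cosmetic: the paper simply cites the computation of the simplicial volume of compact surfaces (\cite{frigerio}*{Corollary 7.5}) where you sketch it by hand (your ideal-triangulation remark is only a heuristic for that standard fact), and it concludes the reverse inequality by bounding $\gromnorm{[\Sigma]}$ above by the number of triangles rather than by your direct Euler-characteristic count, which also works.
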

    
    \begin{proof}
        First consider an admissible surface $f:\left(\Sigma,\partial\Sigma\right)\rightarrow\left(X,\gamma\right)$ with $f_*[\Sigma]=n(\Sigma)\alpha$.
        Then we can estimate
        \[
            \gromnorm{\alpha}=\frac{\gromnorm{f_*[\Sigma]}}{n(\Sigma)}\leq\frac{\normm{[\Sigma]}_1}{n(\Sigma)}.
        \]
        But the $\ell^1$-seminorm of $[\Sigma]$ is known as the \emph{simplicial volume} of $\Sigma$, and it is equal to $-2\chi^-(\Sigma)$ \cite{frigerio}*{Corollary 7.5}.
        This proves the inequality $\left(\leq\right)$ of the proposition.
        
        For the reverse inequality, we follow the same line of reasoning as in Calegari's proof that $\scl$ is not greater than the Gersten boundary norm \cite{cal-scl}*{Lemma 2.69}, which is based on an argument of Bavard \cite{bavard}*{Proposition 3.2}.
        Let $a\in Z_2\left(X_\gamma,\coprod S^1;\mathbb{Q}\right)$ be a rational relative $2$-cycle representing $\alpha$, and let $a_0\in C_2\left(X_\gamma;\mathbb{Q}\right)$ be a $2$-chain mapping to $a$.
        By Lemma \ref{lmm:l1-rat-classes}, the infimum of $\gromnorm{a_0}$ over such $a_0$ is equal to $\gromnorm{\alpha}$.
                
        Since $a_0$ is rational, there exists $q\in\mathbb{N}_{\geq1}$ such that $qa_0$ is integral; we can write $qa_0=\sum_j\varepsilon_j\sigma_j$, with $\varepsilon_j\in\{\pm1\}$ and $\sigma_j:\Delta^2\rightarrow X_\gamma$ a singular $2$-simplex.
        We can assume that no singular $2$-simplex appears twice with opposite signs in the above expression, so that
        \[
            \normm{qa_0}_1=\sum_j\left|\varepsilon_j\right|.
        \]
        The fact that $a$ is a relative $2$-cycle means that $da_0$ has support contained in $\coprod S^1$.
        Therefore, we can construct a partial pairing on the edges of the simplices $\sigma_j$ such that paired edges have the same image in $X_\gamma$, and non-paired edges all map to $\coprod S^1$.
        We then construct a $2$-dimensional simplicial complex $\Sigma$ by taking a collection $\left\{\Delta^2_j\right\}_j$ of $2$-simplices and gluing them along this pairing.
        The simplicial complex $\Sigma$ thus constructed is a surface with boundary, and the singular simplices $\sigma_j$ define a map $f:\Sigma\rightarrow X_\gamma$ by $f_{\left|\Delta_j^2\right.}=\sigma_j$, with $f\left(\partial\Sigma\right)\subseteq\coprod S_1$.
        After homotoping $f(\Sigma)$ into the image of $X$ in $X_\gamma$, and $f\left(\partial\Sigma\right)$ into $\gamma\left(\coprod S^1\right)$, this induces an admissible surface $f:\left(\Sigma,\partial\Sigma\right)\rightarrow\left(X,\gamma\right)$, and $f_*[\Sigma]=q\alpha$ in $H_2\left(X,\gamma;\mathbb{R}\right)$.
        As above, $-2\chi^-(\Sigma)$ is the simplicial volume $\gromnorm{[\Sigma]}$ of $\Sigma$ \cite{frigerio}*{Corollary 7.5}; on the other hand, our triangulation of $\Sigma$ by the simplices $\Delta^2_j$ gives an upper bound on the simplicial volume:
        \[
            \gromnorm{a_0}=\frac{\normm{qa_0}_1}{q}=\frac{1}{q}\sum_j\left|\varepsilon_j\right|\geq\frac{\gromnorm{[\Sigma]}}{q}=\frac{-2\chi^-(\Sigma)}{q}.
        \]
        By taking the infimum over $a_0$ representing $\alpha$, we obtain the inequality $\left(\geq\right)$.
    \end{proof}
    
    The topological interpretation of $\gromnorm{\cdot}$ connects it to stable commutator length:
    
    \begin{mprop}{A}[Gromov seminorm and $\scl$]\label{prop:scl-gromnorm}
        Let $X$ be a path-connected topological space and let $[c]\in C_1^{\hmtp}\left(\pi_1X;\mathbb{Z}\right)$ be an integral conjugacy class represented by a map $\gamma:\coprod S^1\rightarrow X$. Then
        \[
            \scl_{\pi_1X}\left([c]\right)=\frac{1}{4}\inf\left\{\gromnorm{\alpha}\st{}\alpha\in H_2\left(X,\gamma;\mathbb{Q}\right),\:\partial\alpha=\left[\textstyle\coprod S^1\right]\right\},
        \]
        where $\partial:H_2(X,\gamma;\mathbb{Q})\rightarrow H_1\left(\coprod S^1;\mathbb{Q}\right)$ is the boundary map in the long exact sequence of the pair $\left(X,\gamma\right)$ (see Proposition \ref{prop:les}).
    \end{mprop}
    \begin{proof}
        This follows from the topological interpretations of $\gromnorm{\cdot}$ (Proposition \ref{prop:ell1-gromnorm}) and $\scl$ \cite{cal-scl}*{Proposition 2.74}.
    \end{proof}
    
    We refer to Calegari's book \cite{cal-scl}*{\S{}2.6} for the usual algebraic definition of $\scl$ on chains.
    For our purpose, Proposition \ref{prop:scl-gromnorm} can serve as a definition.
    
    \subsection{Simplicity and incompressibility for admissible surfaces}
    
    We will need admissible surfaces with additional properties:
    
    \begin{defi}
        Let $X$ be a path-connected topological space and $\gamma:\coprod S^1\rightarrow X$.
        We say that an admissible surface $f:\left(\Sigma,\partial\Sigma\right)\rightarrow(X,\gamma)$ is
        \begin{itemize}
            \item \emph{Incompressible} if every simple closed curve in $\Sigma$ with nullhomotopic image in $X$ is nullhomotopic in $\Sigma$,
            \item \emph{Simple} if there are no two boundary components of $\Sigma$ whose images under $f$ represent powers of the same conjugacy class in $\pi_1X$.
        \end{itemize}
    \end{defi}
    
    \begin{lmm}[Simplicity and incompressibility]\label{lmm:simple-incompr-adm-srf}
        Let $X$ be a topological space and $\gamma:\coprod S^1\rightarrow X$.
        Then for every rational class $\alpha\in H_2\left(X,\gamma;\mathbb{Q}\right)$ and for every $\varepsilon>0$, there is a simple, incompressible, admissible surface $f:\left(\Sigma,\partial\Sigma\right)\rightarrow\left(X,\gamma\right)$ such that $f_*[\Sigma]=n(\Sigma)\alpha$ for some $n(\Sigma)\in\mathbb{N}_{\geq1}$, and
        \begin{equation}\label{eq:estim-gromnorm}
            \gromnorm{\alpha}\leq\frac{-2\chi^-(\Sigma)}{n(\Sigma)}\leq\gromnorm{\alpha}+\varepsilon.
        \end{equation}
    \end{lmm}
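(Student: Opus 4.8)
The plan is to take an admissible surface that nearly realises the relative Gromov seminorm, supplied by Proposition \ref{prop:ell1-gromnorm}, and to improve it by the classical surgery operations on surfaces going back to Bavard and Calegari \cite{cal-scl}*{\S{}2.6}. Fix $\varepsilon>0$ and choose an admissible surface $f:(\Sigma,\partial\Sigma)\to(X,\gamma)$ with $f_*[\Sigma]=n(\Sigma)\alpha$ and $-2\chi^-(\Sigma)/n(\Sigma)\leq\gromnorm{\alpha}+\varepsilon$; the lower bound $-2\chi^-(\Sigma)/n(\Sigma)\geq\gromnorm{\alpha}$ in (\ref{eq:estim-gromnorm}) is the $(\leq)$ half of Proposition \ref{prop:ell1-gromnorm} and holds automatically for every admissible surface representing a positive multiple of $\alpha$. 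Hence it suffices to modify $(f,\Sigma)$ into a simple, incompressible admissible surface while (a) not increasing $-\chi^-(\Sigma)$, (b) leaving the class $f_*[\Sigma]\in H_2(X,\gamma;\mathbb{Q})$ unchanged --- which keeps $n(\Sigma)$ fixed --- and (c) leaving $\partial\Sigma$ unchanged up to orientation-preserving reparametrisation; then (\ref{eq:estim-gromnorm}) persists throughout.

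I would establish simplicity first, since the compressions performed afterwards do not touch the boundary and so cannot destroy it. If two boundary components $c,c'$ of $\Sigma$ have images under $\gamma\circ\partial f$ representing powers of a common conjugacy class $w\in\pi_1X$, I would merge them into a single boundary component representing the corresponding power of $w$, by gluing onto $c\cup c'$ an annulus or a pair of pants mapped into the cyclic subgroup $\langle w\rangle$ with the appropriate boundary words. Such a gluing changes $\chi$ only by a bounded amount per operation and, the added piece mapping into a circle, does not change $f_*[\Sigma]$. Iterating over all such pairs produces a simple admissible surface, which I rename $\Sigma$. This is the technical heart of the lemma: one must carry out these merges so that the accumulated increase in $-\chi^-(\Sigma)$, measured against $n(\Sigma)$, stays within $\varepsilon$, which requires a careful analysis of how the complexity $-2\chi^-(\Sigma)/n(\Sigma)$ evolves, in the spirit of Calegari's treatment of $\scl$ on chains.

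For incompressibility I would then compress: while there is a simple closed curve $c\subset\Sigma$ essential in $\Sigma$ with $f(c)$ nullhomotopic in $X$, note that $c$ is not boundary-parallel --- a boundary-parallel such curve would force a boundary component of $\Sigma$ to map nullhomotopically into $X$, a degenerate situation handled separately --- so $c$ lies in the interior, and cutting $\Sigma$ along $c$ and capping the two resulting circles with discs, using a chosen nullhomotopy of $f|_c$, produces a new admissible surface with the same boundary (still simple), with $\chi$ raised by $2$ (so $-\chi^-(\Sigma)$ is not increased; a new sphere component, if any, contributes $0$), and, the $2$-handle together with the nullhomotopy yielding a cobordism over $X$, with $f_*[\Sigma]$ unchanged. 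A standard argument (Calegari \cite{cal-scl}*{\S{}2.6}) shows that only finitely many such compressions occur, after which $\Sigma$ is simultaneously simple, incompressible, admissible, and still satisfies (\ref{eq:estim-gromnorm}).

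Besides the $\chi$-cost control in the simplicity step, which I expect to be the main obstacle, the remaining care lies in checking that every surgery preserves $f_*[\Sigma]$ exactly --- not merely modulo the image of $H_2(X;\mathbb{Q})$ --- and in disposing of the degenerate case where some component of $\gamma$ represents a torsion or trivial conjugacy class: there one splits off the corresponding direct summand of $H_2(X,\gamma;\mathbb{Q})$ (compare Example \ref{ex:rel-hom-chn}) and treats it directly via a disjoint union of discs, which is already simple and incompressible.
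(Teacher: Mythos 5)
Your overall route is the same as the paper's (make the surface simple first by merging boundary components lying over the same conjugacy class, then compress to get incompressibility, noting that compressions only decrease $-\chi^-$ and preserve $f_*[\Sigma]$ and simplicity), but the one step you yourself identify as ``the technical heart'' is exactly the step you do not supply, and it is a genuine gap rather than a routine verification. Merging two boundary components --- whether by your pair-of-pants gluing into $\left<w\right>$ or by the paper's $1$-handle attached along a path --- costs a definite amount of Euler characteristic per operation, so after all merges the ratio $-2\chi^-(\Sigma)/n(\Sigma)$ may have increased by roughly twice the number of merges divided by $n(\Sigma)$; if $n(\Sigma)$ is small (say $n(\Sigma)=1$) this can far exceed $\varepsilon$, and nothing in your write-up prevents it. Your framing even contradicts itself on this point: you first reduce to modifications that (a) do not increase $-\chi^-(\Sigma)$ and (c) do not change $\partial\Sigma$, and that keep $n(\Sigma)$ fixed, and then perform merges that violate (a) and (c). Appealing to ``a careful analysis in the spirit of Calegari'' names the difficulty without resolving it.

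The missing idea in the paper's proof is an asymptotic promotion argument: before attaching each handle, replace $\Sigma$ by a finite cover of large degree $N$ chosen so that the number of boundary components is preserved; then $-\chi^-$ and $n(\Sigma)$ both scale by $N$, and the single handle attachment (cost $1$ in $-\chi^-$) changes the ratio $-2\chi^-(\Sigma)/n(\Sigma)$ only by $2/N$, which can be made smaller than any prescribed fraction of $\varepsilon$. In particular $n(\Sigma)$ is \emph{not} kept fixed in this argument --- contrary to your item (b) --- and one needs the (standard but nontrivial) existence of such covers of arbitrarily large degree preserving boundary components, plus the observation that each merge strictly decreases the number of boundary components so only finitely many rounds are needed. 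Without this (or an equivalent normalisation), your simplicity step does not yield the upper bound in (\ref{eq:estim-gromnorm}). The rest of your argument --- the automatic lower bound from Proposition \ref{prop:ell1-gromnorm}, the compression step with discs glued along a chosen nullhomotopy so that the difference sphere is nullhomotopic and $f_*[\Sigma]$ is preserved, and the termination of compressions --- matches the paper and is fine; the extra caution about boundary-parallel curves and degenerate components of $\gamma$ is harmless but not needed.
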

    \begin{proof}
        Proposition \ref{prop:ell1-gromnorm} implies the existence of an admissible surface $f:\left(\Sigma,\partial\Sigma\right)\rightarrow\left(X,\gamma\right)$ satisfying (\ref{eq:estim-gromnorm}) with $f_*[\Sigma]=n(\Sigma)\alpha$ for some $n(\Sigma)\in\mathbb{N}_{\geq1}$.
        
        If $f$ is not simple, then we can find two boundary components $\partial_1$ and $\partial_2$ of $\Sigma$ whose image under $f$ represent powers of the same conjugacy class in $\pi_1X$.
        Hence we can glue a $1$-handle $H$ between $\partial_1$ and $\partial_2$, with $H$ mapping to a path connecting the respective basepoints of $f\circ\partial_1$ and $f\circ\partial_2$.
        This does not change $f_*[\Sigma]$ but increases $-\chi^-(\Sigma)$ by $1$.
        In order to keep control of $-\chi^-(\Sigma)/n(\Sigma)$, we perform this operation only after replacing $\Sigma$ with a finite cover of large degree $N$ that preserves the number of boundary components.
        Hence, the quantity $-\chi^-(\Sigma)/n(\Sigma)$ will only increase by $1/N$ (this is a simple case of an asymptotic promotion argument, adapted from \cite{cal-scl}*{Proposition 2.10} --- see \cite{chen}*{\S{}4} and \cite{m:isom-embed}*{\S{}4.d} for similar arguments).
        Since this operation decreases the number of boundary components of $\Sigma$ by $1$, we will obtain a simple admissible surface after finitely many iterations.
        
        Now if $f$ is compressible, then there is a simple closed curve $\beta$ in $\Sigma$ which is not nullhomotopic but such that $f\circ\beta$ is.
        In this case, one can cut $\Sigma$ along $\beta$ and glue two discs onto the resulting boundary components; the map $f$ extends onto the new discs since $f\circ\beta$ is assumed to be nullhomotopic.
        This does not change $f_*[\Sigma]$ and makes $-\chi^-(\Sigma)$ decrease, so that (\ref{eq:estim-gromnorm}) still holds, and moreover the property of $f$ being simple is preserved.
        After performing this operation a finite number of times, we therefore obtain that $f$ is simple and incompressible.
    \end{proof}

\section{Bavard duality for the relative Gromov seminorm}\label{sec:bavard}

    Bavard \cite{bavard} proved that the dual space of the $\scl$-seminorm on $C_1^{\hmtp}\left(G;\mathbb{R}\right)$ can be interpreted in terms of quasimorphisms.
    This can be thought of as a kind of $\ell^1$--$\ell^\infty$ duality, and has had a wide range of applications in giving lower bounds for $\scl$ \cites{calegari-fujiwara,bbf,heuer-raags,fft,cfl}.
    We start with some background on classical Bavard duality and bounded cohomology, and then we explain how a result analogous to Bavard's Theorem can be obtained for the relative Gromov seminorm.
    
    \subsection{Bavard duality for scl}
    
    A \emph{quasimorphism} on a group $G$ is a map $\phi:G\rightarrow\mathbb{R}$ such that
    \[
        \sup_{g,h\in G}\left|\phi(gh)-\phi(g)-\phi(h)\right|<\infty.
    \]
    The above supremum is then called the \emph{defect} of $\phi$ and denoted by $D(\phi)$.
    We say in addition that $\phi$ is \emph{homogeneous} if $\phi\left(g^n\right)=n\phi(g)$ for all $g\in G$ and $n\in\mathbb{Z}$.
    
    We denote by $Q(G)$ the $\mathbb{R}$-vector space of homogeneous quasimorphisms on $G$.
    The defect defines a seminorm $D:Q(G)\rightarrow\left[0,\infty\right)$, which vanishes exactly on the subspace $\Hom\left(G,\mathbb{R}\right)\subseteq Q(G)$ consisting of homomorphisms to $\mathbb{R}$.
    In particular, the defect descends to a genuine norm on the quotient $Q(G)/\Hom\left(G,\mathbb{R}\right)$.
    
    If $\phi:G\rightarrow\mathbb{R}$ is a homogeneous quasimorphism, then $\phi$ extends to a $\mathbb{Z}$-linear map $C_1\left(G;\mathbb{Z}\right)\rightarrow\mathbb{R}$.
    The extension satisfies
    \[
        \left|\phi\left(w-w^t\right)\right|=\frac{1}{n}\left|\phi\left(w^n-t^{-1}w^nt\right)\right|\leq\frac{2D(\phi)}{n}\xrightarrow[n\to\infty]{}0
    \]
    for all $w,t\in G$.
    It follows that $\phi$ vanishes on the sub-$\mathbb{Z}$-module $K\left(G;\mathbb{Z}\right)$ of $C_1\left(G;\mathbb{Z}\right)$ spanned by elements of the form $\left(w-w^t\right)$, as in \S{}\ref{subsec:1-chns}.
    Therefore, $\phi$ descends to a $\mathbb{Z}$-linear map $C_1^{\hmtp}\left(G;\mathbb{Z}\right)\rightarrow\mathbb{R}$, which then extends to a $\mathbb{R}$-linear map $C_1^{\hmtp}\left(G;\mathbb{R}\right)\rightarrow\mathbb{R}$.
    
    Classical Bavard duality says that the (semi)normed vector space $\left(Q(G),D\right)$\footnote{Or $Q(G)/\Hom\left(G,\mathbb{R}\right)$, where $D$ defines a honest norm.} is dual to $\left(C_1^{\hmtp}\left(G;\mathbb{R}\right),\scl\right)$:
    
    \begin{thm}[Bavard \cite{bavard}]\label{thm:bavard}
        Let $G$ be a countable group and $[c]\in C_1^{\hmtp}\left(G;\mathbb{R}\right)$.
        Then there is an equality
        \[
            \scl_G\left([c]\right)=\sup\left\{\frac{\phi\left([c]\right)}{2D(\phi)}\st{}\phi\in Q(G)\smallsetminus\Hom\left(G;\mathbb{R}\right)\right\}.
        \]
    \end{thm}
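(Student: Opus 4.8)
The plan is to prove the two inequalities separately; the real work lies in the lower bound for $\scl_G$. For $\left(\geq\right)$ --- which is Bavard's inequality --- one uses that a homogeneous quasimorphism $\phi$ is invariant under conjugation, so the quasimorphism inequality gives $\left|\phi\left([a,b]\right)\right|\leq D(\phi)$ for all $a,b\in G$ and hence $\left|\phi(g)\right|\leq\left(2k-1\right)D(\phi)$ whenever $g$ is a product of $k$ commutators. Applying this to the powers $g^{n}$, dividing by $n$ and letting $n\to\infty$, one gets $\left|\phi(g)\right|\leq 2D(\phi)\scl_G(g)$; since $\phi$ descends to an $\mathbb{R}$-linear functional on $C_1^{\hmtp}\left(G;\mathbb{R}\right)$ (as explained above), the same bound passes to chains, i.e.\ $\phi\left([c]\right)/\left(2D(\phi)\right)\leq\scl_G\left([c]\right)$, and taking the supremum over $\phi$ gives $\left(\geq\right)$.

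For $\left(\leq\right)$, I would first reduce to the case $[c]\in B_1^{\hmtp}\left(G;\mathbb{R}\right)$ with $\scl_G\left([c]\right)>0$ (if the image of $[c]$ in $H_1\left(G;\mathbb{R}\right)$ is non-zero, i.e.\ $[c]\notin B_1^{\hmtp}\left(G;\mathbb{R}\right)$ --- cf.\ Remark \ref{rk:compute-homol-homog} --- then $\scl_G\left([c]\right)=+\infty$, and adding large multiples of a suitable homomorphism $G\to\mathbb{R}$ to any fixed non-homomorphism quasimorphism makes the right-hand side infinite as well). The geometric input consists of two elementary estimates, both consequences of the topological interpretation of $\scl_G$ (Proposition \ref{prop:scl-gromnorm}): for $g,h\in G$, the boundary chain $d_2^c(g,h)=[h]-[gh]+[g]$ is projectively bounded by a pair of pants, so
\[
    \scl_G\left(d_2^c(g,h)\right)\leq\tfrac{1}{2};
\]
and for $g\in G$ and $n\geq1$, the chain $[g^{n}]-n[g]$ --- which has zero image in $H_1\left(G;\mathbb{R}\right)$, hence lies in $B_1^{\hmtp}\left(G;\mathbb{R}\right)$, and is represented by two loops freely homotopic to $g^{n}$ and $g^{-n}$ --- is bounded by an annulus, so $\scl_G\left([g^{n}]-n[g]\right)=0$. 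In particular the first estimate shows that $\scl_G$ is finite, hence a genuine seminorm, on the $\mathbb{R}$-vector space $B_1^{\hmtp}\left(G;\mathbb{R}\right)$; writing $N$ for its null space and applying the Hahn--Banach theorem to the normed space $B_1^{\hmtp}\left(G;\mathbb{R}\right)/N$, we obtain a linear functional $\Phi$ on $B_1^{\hmtp}\left(G;\mathbb{R}\right)$ with $\left|\Phi\right|\leq\scl_G$ pointwise and $\Phi\left([c]\right)=\scl_G\left([c]\right)$.

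The remaining step --- promoting $\Phi$ to a homogeneous quasimorphism --- is the one I expect to be the main obstacle, though the two estimates above make it go through cleanly. Choose any $\mathbb{R}$-linear extension $\widetilde{\Phi}$ of $\Phi$ to $C_1^{\hmtp}\left(G;\mathbb{R}\right)$ and define $\phi:G\to\mathbb{R}$ by $\phi(g)=\widetilde{\Phi}\left([g]\right)$. Since the boundary chain $[g^{n}]-n[g]$ has $\scl_G\left([g^{n}]-n[g]\right)=0$, we get $\phi\left(g^{n}\right)-n\phi(g)=\widetilde{\Phi}\left([g^{n}]-n[g]\right)=\Phi\left([g^{n}]-n[g]\right)=0$, so $\phi$ is homogeneous; and since $[gh]-[g]-[h]=-d_2^c(g,h)$ is a boundary chain, the defect is $D(\phi)=\sup_{g,h}\left|\widetilde{\Phi}\left([gh]-[g]-[h]\right)\right|=\sup_{g,h}\left|\Phi\left(d_2^c(g,h)\right)\right|\leq\tfrac{1}{2}$, so $\phi\in Q(G)$. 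Moreover $\phi\notin\Hom\left(G,\mathbb{R}\right)$, since a homomorphism $G\to\mathbb{R}$ vanishes on $B_1^{\hmtp}\left(G;\mathbb{R}\right)$ whereas $\Phi\left([c]\right)\neq0$; in particular $D(\phi)>0$. Finally, as $[c]\in B_1^{\hmtp}\left(G;\mathbb{R}\right)$ and the $\mathbb{R}$-linear extension of $\phi$ to $C_1^{\hmtp}\left(G;\mathbb{R}\right)$ is $\widetilde{\Phi}$, we have $\phi\left([c]\right)=\widetilde{\Phi}\left([c]\right)=\Phi\left([c]\right)=\scl_G\left([c]\right)$, and therefore
\[
    \frac{\phi\left([c]\right)}{2D(\phi)}\;\geq\;\phi\left([c]\right)\;=\;\scl_G\left([c]\right)
\]
(using $2D(\phi)\leq1$ and $\phi\left([c]\right)\geq0$), which proves $\left(\leq\right)$. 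As an alternative to this explicit construction, one could deduce the theorem from the isomorphism $Q(G)/\Hom\left(G,\mathbb{R}\right)\cong EH^2_b\left(G;\mathbb{R}\right)$ together with the $\ell^1$--$\ell^\infty$ duality between $\ell^1$-homology and bounded cohomology, in the spirit of the proof of Theorem \ref{thm:bav-dual-rel-grom} below.
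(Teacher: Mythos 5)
The paper does not actually prove Theorem \ref{thm:bavard}: it is quoted from Bavard (with the version for chains in \cite{cal-scl}*{Theorem 2.79}), so there is no internal argument to compare yours against. What you propose is essentially the standard proof, and it is correct in outline: the easy inequality $(\geq)$ via the estimate $\left|\phi(g)\right|\leq2D(\phi)\scl_G(g)$ for homogeneous quasimorphisms, and the hard inequality $(\leq)$ via Hahn--Banach applied to the seminorm $\scl_G$ on $B_1^{\hmtp}\left(G;\mathbb{R}\right)$, followed by promoting the dual functional to a homogeneous quasimorphism of defect at most $\tfrac{1}{2}$. The two geometric inputs you isolate, $\scl_G\left(g+h-gh\right)\leq\tfrac{1}{2}$ (pair of pants) and $\scl_G\left(g^{n}-ng\right)=0$ (annulus), together with subadditivity and homogeneity of $\scl$ on real chains, are exactly the standard facts from \cite{cal-scl}*{\S{}2.6} that Remark \ref{rmk:scl-norm} alludes to, and your verification that $\phi\notin\Hom\left(G,\mathbb{R}\right)$ (the linear extension of a homomorphism kills $B_1^{\hmtp}\left(G;\mathbb{R}\right)$, while $\Phi\left([c]\right)=\scl_G\left([c]\right)>0$) and the final estimate using $2D(\phi)\leq1$ are sound.

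Two small points to tidy up, neither of which is a genuine gap. First, homogeneity means $\phi\left(g^{n}\right)=n\phi(g)$ for all $n\in\mathbb{Z}$, whereas your annulus estimate is stated for $n\geq1$; the same annulus handles negative $n$ (or deduce $\phi\left(g^{-1}\right)=-\phi(g)$ from $\scl_G\left(g+g^{-1}\right)=0$). Second, your reduction skips the case $\scl_G\left([c]\right)=0$, where $(\leq)$ follows because the supremum is non-negative (replace $\phi$ by $-\phi$), provided $Q(G)\smallsetminus\Hom\left(G,\mathbb{R}\right)\neq\emptyset$; the degenerate situation $Q(G)=\Hom\left(G,\mathbb{R}\right)$ (and likewise its combination with $[c]\notin B_1^{\hmtp}\left(G;\mathbb{R}\right)$) is an edge case of the statement itself rather than of your argument. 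Your closing alternative --- deducing the theorem from the exact sequence of Remark \ref{rk:qm-h2b} together with $\ell^{1}$--$\ell^{\infty}$ duality --- is indeed the route closest in spirit to the proof of Theorem \ref{thm:bav-dual-rel-grom} in the paper.
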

    
    \subsection{Bounded cohomology of topological spaces}\label{subsec:hb-top}
    
    Our analogue of Bavard duality for the relative Gromov seminorm will be based on bounded cohomology, of which we recall the definition here.
    We refer the reader to Frigerio's book \cite{frigerio} for a much more detailed treatment.
    
    Let $X$ be a topological space.
    Recall that the singular cohomology of $X$ (with real coefficients) is the cohomology of the singular cochain complex $C^*_\textnormal{sg}\left(X;\mathbb{R}\right)$ given by
    \[
        C^n_\textnormal{sg}\left(X;\mathbb{R}\right)\coloneqq\Hom_R\left(C_n^\textnormal{sg}\left(X;\mathbb{R}\right),R\right).
    \]
    Since the $n$-th chain group $C_n^\textnormal{sg}\left(X;\mathbb{R}\right)$ is the free $R$-module on the set $\mathcal{S}_n$ of singular $n$-simplices $\sigma:\Delta^n\rightarrow X$, the $n$-th cochain group $C^n_\textnormal{sg}\left(X;\mathbb{R}\right)$ can equivalently be defined as the set of all maps $\mathcal{S}_n\rightarrow \mathbb{R}$.
    The \emph{$\ell^\infty$-norm} of a cochain $\psi\in C^n_\textnormal{sg}\left(X;\mathbb{R}\right)=\mathbb{R}^{\mathcal{S}_n}$ is
    \[
        \inftynorm{\psi}\coloneqq\sup_{\sigma\in\mathcal{S}_n}\left|\psi(\sigma)\right|\in\left[0,+\infty\right].
    \]
    
    Now the \emph{bounded cochain complex} of $X$ with coefficients in $R$ is the sub-cochain complex $C^*_{b}\left(X;\mathbb{R}\right)$ of $C^*_\textnormal{sg}\left(X;\mathbb{R}\right)$ consisting of all \emph{bounded} maps $\mathcal{S}_n\rightarrow\mathbb{R}$:
    \[
        C^n_{b}\left(X;\mathbb{R}\right)\coloneqq\left\{\psi\in C^n_\textnormal{sg}\left(X;\mathbb{R}\right)\st{}\inftynorm{\psi}<\infty\right\}\subseteq C^n_\textnormal{sg}\left(X;\mathbb{R}\right),
    \]
    with coboundary induced by that of $C^*_\textnormal{sg}\left(X;\mathbb{R}\right)$.
    The \emph{bounded cohomology} of $X$ is the cohomology of this cochain complex:
    \[
        H^*_{b}\left(X;\mathbb{R}\right)\coloneqq H^*\left(C^*_{b}\left(X;\mathbb{R}\right)\right).
    \]
    
    The $\ell^\infty$-norm descends to a seminorm --- still denoted $\inftynorm{\cdot}$ --- on $H^*_b\left(X;\mathbb{R}\right)$.
    
    For us, bounded cohomology will always be understood to be with real coefficients, and we might omit $\mathbb{R}$ from the notation.
    
    It turns out that $\inftynorm{\cdot}$ defines a genuine norm in degree $2$ if $X$ is a countable CW-complex:
    
    \begin{thm}[Matsumoto--Morita--Ivanov \cites{mm,ivanov:h2b}]\label{thm:mmi}
        Let $X$ be a countable CW-complex.
        Then $\inftynorm{u}>0$ for every $u\in H^2_b\left(X;\mathbb{R}\right)\smallsetminus\{0\}$.
    \end{thm}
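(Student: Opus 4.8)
The plan is to reduce to the case of a classifying space and then invoke the classical identification of the null-space of the degree-$2$ comparison map with a space of quasimorphisms. Since $X$ is a countable CW-complex, $G\coloneqq\pi_1X$ is countable, and the classifying map $X\to K(G,1)$ induces an isomorphism on fundamental groups, hence an isometric isomorphism $H^2_b(G;\mathbb{R})\cong H^2_b(X;\mathbb{R})$ by Gromov's Mapping Theorem \cite{frigerio}*{Corollary 5.11}. It therefore suffices to show that $\inftynorm{u}>0$ for every $u\in H^2_b(G;\mathbb{R})\smallsetminus\{0\}$, or equivalently that $\inftynorm{u}=0$ forces $u=0$.

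So let $u\in H^2_b(G;\mathbb{R})$ with $\inftynorm{u}=0$. The first step is to check that $u$ lies in the kernel of the comparison map $c:H^2_b(G;\mathbb{R})\to H^2(G;\mathbb{R})$. Indeed, given a class $\beta\in H_2(G;\mathbb{R})$ represented by a (finite) real singular $2$-cycle $z$, and any bounded cocycle $\omega$ representing $u$, we have $\left|\left<c(u),\beta\right>\right|=\left|\left<\omega,z\right>\right|\leq\inftynorm{\omega}\cdot\normm{z}_1$; taking the infimum over all such $\omega$ gives $\left<c(u),\beta\right>=0$. Since $\mathbb{R}$ is a field, the universal coefficient theorem yields $H^2(G;\mathbb{R})\cong\Hom_\mathbb{R}\left(H_2(G;\mathbb{R}),\mathbb{R}\right)$, so $c(u)=0$.

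The heart of the argument is then the classical exact sequence arising from the inclusion of the bounded into the ordinary (bar) cochain complex of $G$: since $H^1_b(G;\mathbb{R})=0$, it reads $0\to H^1(G;\mathbb{R})\to Q(G)\to H^2_b(G;\mathbb{R})\xrightarrow{c}H^2(G;\mathbb{R})$, where $H^1(G;\mathbb{R})=\Hom(G,\mathbb{R})$ embeds as the homomorphisms and the map $Q(G)\to H^2_b(G;\mathbb{R})$ sends a homogeneous quasimorphism $\phi$ to the class $[\delta\phi]$ (see \cite{cal-scl}*{\S{}2.3} and \cite{frigerio}). As $c(u)=0$, exactness lets us write $u=[\delta\phi]$ for some $\phi\in Q(G)$.

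Finally, I would quantify boundedness in terms of the defect. For any bounded cocycle $\omega$ representing $u$, pick a bounded $1$-cochain $\psi$ with $\omega=\delta\phi+\delta\psi$; then $\phi+\psi$ is a quasimorphism with $D(\phi+\psi)\leq\inftynorm{\omega}$, and its homogenisation equals $\phi$ (because $\psi$ is bounded and $\phi$ is homogeneous), so $D(\phi)\leq 2D(\phi+\psi)\leq 2\inftynorm{\omega}$. Taking the infimum over $\omega$ gives $D(\phi)\leq 2\inftynorm{u}=0$, hence $\phi\in\Hom(G,\mathbb{R})$ and $u=[\delta\phi]=0$, as desired. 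The main obstacle is marshalling the quasimorphism machinery — the exact sequence above together with the homogenisation inequality $D(\overline{q})\leq 2D(q)$ — whereas the reduction to a $K(G,1)$ and the vanishing of $c(u)$ are comparatively soft.
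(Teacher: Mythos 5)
You should note first that the paper does not prove Theorem \ref{thm:mmi} at all: it is imported as a known theorem of Matsumoto--Morita and Ivanov, so there is no internal argument to compare yours with. Your proof is correct, and it is essentially the classical argument behind the cited result, assembled from facts the paper itself records elsewhere: the identification $H^2_b(X;\mathbb{R})\cong H^2_b(\pi_1X;\mathbb{R})$ via Gromov's Mapping Theorem is exactly the one used in \S{}\ref{subsec:hb-gp}, the exact sequence $0\to\Hom(G,\mathbb{R})\to Q(G)\to H^2_b(G;\mathbb{R})\to H^2(G;\mathbb{R})$ appears in Remark \ref{rk:qm-h2b} (citing \cite{cal-scl}*{Theorem 2.50}), and the homogenisation bound $D(\overline{q})\leq 2D(q)$ is \cite{cal-scl}*{Lemma 2.58}. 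Each step checks out: a class of zero seminorm pairs trivially with all of $H_2(G;\mathbb{R})$, and since $\mathbb{R}$ is a field this kills its image under the comparison map; exactness then yields $u=\left[\delta\phi\right]$ with $\phi$ homogeneous; and writing an arbitrary bounded representative $\omega$ as $\delta(\phi+\psi)$ with $\psi$ bounded gives $D(\phi)\leq 2\inftynorm{\omega}$, hence $D(\phi)\leq 2\inftynorm{u}=0$, so $\phi$ is a homomorphism and $u=0$. Two small remarks: the reduction to a $K(G,1)$ tacitly assumes $X$ path-connected (harmless, since for disconnected $X$ one restricts to a component on which the class is nonzero, restriction being norm non-increasing, and the paper only applies the theorem to path-connected spaces); and countability of $\pi_1X$ plays no role in your argument --- the quasimorphism step works for arbitrary groups, the hypothesis being present only to match the framework in which the paper compares bounded cohomology of spaces and groups.
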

    
    Duality between the $\ell^\infty$-norm on bounded cohomology and the $\ell^1$-norm on singular homology plays a central role in this paper.
    It comes from the natural pairing
    \[
        \left<-,-\right>:C^*_b\left(X;\mathbb{R}\right)\times C_*^\textnormal{sg}\left(X;\mathbb{R}\right)\rightarrow R
    \]
    which is the restriction of the duality pairing $C^*_\textnormal{sg}\left(X;\mathbb{R}\right)\times C_*^\textnormal{sg}\left(X;\mathbb{R}\right)\rightarrow R$ given by $\left<\psi,c\right>\coloneqq\psi(c)$ for $\psi\in C^n_\textnormal{sg}\left(X;\mathbb{R}\right)$ and $c\in C_n^\textnormal{sg}\left(X;\mathbb{R}\right)$.
    This descends to a pairing
    \[
        \left<-,-\right>:H^*_b\left(X;\mathbb{R}\right)\times H_*\left(X;\mathbb{R}\right)\rightarrow\mathbb{R},
    \]
    which is called the \emph{Kronecker product}.
    The Banach space $H_b^n\left(X;\mathbb{R}\right)$ is dual to the seminormed space $H_n\left(X;\mathbb{R}\right)$ under this pairing:
    
    \begin{prop}[$\ell^1$--$\ell^\infty$ duality in bounded cohomology \cite{frigerio}*{Lemma 6.1}]
        Let $X$ be a topological space and $\alpha\in H_n\left(X;\mathbb{R}\right)$.
        Then the $\ell^1$-seminorm of $\alpha$ satisfies
        \[
            \normm{\alpha}_1=\sup\left\{\frac{\left<u,\alpha\right>}{\inftynorm{u}}\st{}u\in H^n_b(X;\mathbb{R}),\:\inftynorm{u}>0\right\}.
        \]
    \end{prop}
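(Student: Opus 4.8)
The plan is to establish the two inequalities of the duality formula separately. The inequality $\leq$ for the supremum is elementary, while the reverse direction, which must produce the optimal bounded cohomology class, rests on the Hahn--Banach theorem.

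For the elementary direction, I would fix $u\in H^n_b(X;\mathbb{R})$ with $\inftynorm{u}>0$, represent it by a bounded cocycle $\psi\in C^n_b(X;\mathbb{R})$, and let $z$ be any singular cycle in $C_n^{\textnormal{sg}}(X;\mathbb{R})$ representing $\alpha$. The Kronecker product is computed at the cochain level, and the triangle inequality gives $\left<u,\alpha\right>=\psi(z)\leq\inftynorm{\psi}\gromnorm{z}$. Taking the infimum over all cycles $z$ representing $\alpha$ yields $\left<u,\alpha\right>\leq\inftynorm{\psi}\gromnorm{\alpha}$, and then taking the infimum over all bounded cocycles $\psi$ representing $u$ yields $\left<u,\alpha\right>\leq\inftynorm{u}\gromnorm{\alpha}$, so $\left<u,\alpha\right>/\inftynorm{u}\leq\gromnorm{\alpha}$; hence the supremum is at most $\gromnorm{\alpha}$.

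For the reverse inequality, assume first that $\gromnorm{\alpha}>0$. The key point is that a linear functional on the normed space $\left(C_n^{\textnormal{sg}}(X;\mathbb{R}),\gromnorm{\cdot}\right)$ --- which is the free $\mathbb{R}$-vector space on the set $\mathcal{S}_n$ of singular $n$-simplices equipped with its $\ell^1$-norm --- is bounded precisely when the corresponding function $\mathcal{S}_n\rightarrow\mathbb{R}$ lies in $C^n_b(X;\mathbb{R})$, in which case its operator norm equals the $\ell^\infty$-norm of that cochain. I would fix a cycle $z$ representing $\alpha$, write $B_n^{\textnormal{sg}}(X;\mathbb{R})=\partial C_{n+1}^{\textnormal{sg}}(X;\mathbb{R})$, and set $d\coloneqq\gromnorm{\alpha}=\inf\left\{\gromnorm{z+b}\st{}b\in B_n^{\textnormal{sg}}(X;\mathbb{R})\right\}$; note that $z\notin B_n^{\textnormal{sg}}(X;\mathbb{R})$ because $d>0$. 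On the subspace $\mathbb{R}z+B_n^{\textnormal{sg}}(X;\mathbb{R})$ the assignment $\lambda z+b\mapsto\lambda d$ is then a well-defined linear functional of operator norm $\leq1$, since $\gromnorm{\lambda z+b}\geq\left|\lambda\right|d$. By Hahn--Banach it extends to a functional $f$ on $C_n^{\textnormal{sg}}(X;\mathbb{R})$ with $\normm{f}_{\textnormal{op}}\leq1$ that still vanishes on $B_n^{\textnormal{sg}}(X;\mathbb{R})$ and satisfies $f(z)=d$. Let $\psi\in C^n_b(X;\mathbb{R})$ be the corresponding bounded cochain, so $\inftynorm{\psi}\leq1$. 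Since $\psi$ vanishes on $B_n^{\textnormal{sg}}(X;\mathbb{R})=\partial C_{n+1}^{\textnormal{sg}}(X;\mathbb{R})$, its coboundary $\delta\psi$ vanishes on all $(n+1)$-chains, hence $\delta\psi=0$, so $\psi$ is a bounded cocycle and defines a class $u\in H^n_b(X;\mathbb{R})$ with $\left<u,\alpha\right>=\psi(z)=d=\gromnorm{\alpha}$ and $\inftynorm{u}\leq\inftynorm{\psi}\leq1$. In particular $\left<u,\alpha\right>=\gromnorm{\alpha}>0$ forces $u\neq0$, so $\left<u,\alpha\right>/\inftynorm{u}\geq\gromnorm{\alpha}$, and combined with the elementary direction this gives equality.

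Finally, in the degenerate case $\gromnorm{\alpha}=0$, applying the elementary direction to both $\alpha$ and $-\alpha$ shows $\left|\left<u,\alpha\right>\right|\leq\inftynorm{u}\gromnorm{\alpha}=0$ for every admissible $u$, so every ratio appearing in the supremum vanishes and both sides equal $0$. I expect the main obstacle to be the Hahn--Banach step: one must check that the prescribed functional on $\mathbb{R}z+B_n^{\textnormal{sg}}(X;\mathbb{R})$ is well-defined with operator norm at most $1$, and --- crucially --- that vanishing on all of $B_n^{\textnormal{sg}}(X;\mathbb{R})$ (not merely on a single hyperplane) is exactly what forces the extension to be a cocycle; the boundedness of $\delta\psi$ is automatic from that of $\psi$ and needs no separate estimate.
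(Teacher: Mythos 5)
Your proof is correct, and it is essentially the standard Hahn--Banach duality argument: the paper itself gives no proof of this proposition but cites Frigerio's Lemma 6.1, whose proof is exactly this construction (the elementary estimate in one direction, and in the other a norm-one functional on $\mathbb{R}z+B_n^{\textnormal{sg}}(X;\mathbb{R})$ extended by Hahn--Banach and reinterpreted as a bounded cocycle). The only nitpick is at the very end: you need $\inftynorm{u}>0$ rather than merely $u\neq 0$ to form the ratio, since in general the $\ell^\infty$-seminorm can vanish on nonzero classes; but this is immediate, because $\inftynorm{u}=0$ would force $\left<u,\alpha\right>=0$ by your elementary direction, contradicting $\left<u,\alpha\right>=\gromnorm{\alpha}>0$.
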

    
    \subsection{Bounded cohomology of groups}\label{subsec:hb-gp}
    
    It follows from Gromov's Mapping Theorem \cite{frigerio}*{Corollary 5.11} that, for any continuous map $f:X\rightarrow Y$ between countable path-connected CW-complexes inducing an isomorphism on fundamental groups, the induced map $f^*:H^*_b(Y;\mathbb{R})\rightarrow H^*_b(X;\mathbb{R})$ is an isometric isomorphism (\emph{isometric} means that it preserves the $\ell^\infty$-seminorm).
    
    Hence, given a countable group $G$, one can define the \emph{bounded cohomology} of $G$ to be the bounded cohomology of any countable path-connected CW-complex $X$ with $\pi_1X=G$:
    \[
        H^*_b(G;\mathbb{R})\coloneqq H^*_b(X;\mathbb{R}).
    \]
    Such a space $X$ always exists --- for instance, one can take $X$ to be a (potentially infinite) presentation complex of $G$.
    Since there are isometric isomorphisms $H^*_b(X;\mathbb{R})\cong H^*_b\left(X';\mathbb{R}\right)$ for any two choices of $X,X'$ as above, there is a well-defined $\ell^\infty$-seminorm on $H^*_b(G;\mathbb{R})$.
    
    But the bounded cohomology of a group can be given a more algebraic interpretation as follows.
    The \emph{bar cochain complex} of $G$ with real coefficients is defined by
    \[
        C^n\left(G;\mathbb{R}\right)\coloneqq\mathbb{R}^{G^n},
    \]
    where $\mathbb{R}^{G^n}$ is the space of all maps $G^n\rightarrow\mathbb{R}$. Coboundary maps $d^n:C^{n-1}\left(G;\mathbb{R}\right)\rightarrow C^{n}\left(G;\mathbb{R}\right)$ are given by
    \begin{multline*}
        d^n\psi\left(g_1,\dots,g_n\right)\coloneqq\psi\left(g_2,\dots, g_n\right)-\psi\left(g_1g_2, g_3,\dots, g_n\right)+\psi\left(g_1, g_2g_3,\dots, g_n\right)\\
        -\cdots+(-1)^{n-1}\psi\left(g_1,\dots, g_{n-2}, g_{n-1}g_n\right)+(-1)^n\psi\left(g_1,\dots, g_{n-1}\right).
    \end{multline*}
    This is the dual of the chain complex $C_*\left(G;\mathbb{R}\right)$ introduced in \S{}\ref{subsec:1-chns}.
    
    Given a cochain $\psi\in C^n\left(G;\mathbb{R}\right)$, its $\ell^\infty$-norm is
    \[
        \inftynorm{\psi}\coloneqq\sup_{\left(g_1,\dots,g_n\right)\in G^n}\left|\psi\left(g_1,\dots,g_n\right)\right|\in\left[0,+\infty\right].
    \]
    The \emph{bounded cochain complex} of $G$ is
    \[
        C^*_b\left(G;\mathbb{R}\right)\coloneqq\left\{\psi\in C^n\left(G;\mathbb{R}\right)\st{}\inftynorm{\psi}<\infty\right\}.
    \]
    It turns out that the bounded cohomology of $G$ can be defined as the cohomology of $C^*_b\left(G;\mathbb{R}\right)$, and we now explain how to write an explicit isomorphism between this cohomology and the bounded cohomology of a space $X$ with fundamental group $G$ as defined in \S{}\ref{subsec:hb-top}.
    
    Let $X$ be a countable path-connected CW-complex with a fixed basepoint $\omega$ and such that $\pi_1\left(X,\omega\right)=G$.
    Each element $g$ of $G$ can be represented by a loop $\gamma_g:S^1\rightarrow X$ based at $\omega$, which can also be described as a map $\sigma_g:\Delta^1\rightarrow X$, where $\Delta^1$ is a $1$-simplex (i.e.\ a segment), and $\sigma_g$ maps both endpoints of $\Delta^1$ to $\omega$.
    For all $g_1,g_2\in G$, the concatenation $\sigma_{g_1}\cdot\sigma_{g_2}$ is homotopic (with fixed endpoints) to $\sigma_{g_1g_2}$, and one can construct a map $\sigma_{g_1,g_2}:\Delta^2\rightarrow X$ (where $\Delta^2$ is a $2$-simplex) such that the restrictions of $\sigma_{g_1,g_2}$ to its three faces are $\sigma_{g_2}$, $\sigma_{g_1g_2}^{-1}$, and $\sigma_{g_1}$ (where $\sigma^{-1}$ is the singular simplex $\sigma$ with reversed orientation), as in Figure \ref{fig:constr-che}.
        \begin{figure}[htb]
            \centering
            \begin{tikzpicture}[every node/.style={rectangle,draw=none,fill=none},scale=1.2]
                \foreach \i in {0,1,2}{\coordinate (c\i) at (\i*360/3:4.5em);}
                
                \draw [draw=none,pattern={Lines[angle=45,distance=7pt]},pattern color=gray] (c0) -- (c1) -- (c2) -- cycle;
                
                \foreach \i in {0,1,2}{\node [draw,circle,fill,inner sep=1pt] (x\i) at (c\i) {};}
                
                \draw [thick] (x0) -- (x1) coordinate [midway] (mid);
                \draw [thick,->] (x1) -- (mid) node [above right] {${\sigma_{g_1g_2}}$};
                
                \draw [thick] (x1) -- (x2) coordinate [midway] (mid);
                \draw [thick,->] (x1) -- (mid) node [left] {${\sigma_{g_1}}$};
                
                \draw [thick] (x2) -- (x0) coordinate [midway] (mid);
                \draw [thick,->] (x2) -- (mid) node [below right] {${\sigma_{g_2}}$};
                
                \draw node [rectangle,fill=white,draw=none,inner sep=1pt,rounded corners=2pt] at (-.1,0) {$\sigma_{g_1,g_2}$};
                
                \draw node at (.7,0) {\Huge$\circlearrowleft$};
            \end{tikzpicture}
            \caption{Construction of the chain map $h_*:C_*\left(G;\mathbb{R}\right)\rightarrow C_*^\textnormal{sg}\left(X;\mathbb{R}\right)$.}
            \label{fig:constr-che}
        \end{figure}
    We can iterate this construction and choose, for each $n$-tuple $\left(g_1,\dots,g_n\right)\in G^n$, a singular simplex $\sigma_{g_1,\dots,g_n}$ whose restriction to its $i$-th face is $\sigma_{g_1,\dots,g_ig_{i+1},\dots,g_n}^{\varepsilon_i}$ (respectively $\sigma_{g_2,\dots,g_n}^{\varepsilon_0}$ for $i=0$ and $\sigma_{g_1,\dots,g_{n-1}}^{\varepsilon_n}$ for $i=n$), with $\varepsilon_i=(-1)^i$.
    
    If $X$ is a $K(G,1)$ space, then the map $h:\left(g_1,\dots,g_n\right)\mapsto\sigma_{g_1,\dots,g_n}$ induces a chain homotopy equivalence $h_*:C_*\left(G;\mathbb{R}\right)\xrightarrow{\sim}C_*^\textnormal{sg}\left(X;\mathbb{R}\right)$ \cite{brown}*{\S{}I.4} and therefore a cochain homotopy equivalence
    \[
        h^*:C^*_\textnormal{sg}\left(X;\mathbb{R}\right)\xrightarrow{\sim}C^*\left(G;\mathbb{R}\right),
    \]
    which induces an isomorphism
    \[
        H^*\left(C^*\left(G;\mathbb{R}\right)\right)\cong H^*\left(X;\mathbb{R}\right).
    \]
    The image under $h^*$ of the bounded cochain complex $C^*_b\left(X;\mathbb{R}\right)$ is $C^*_b\left(G;\mathbb{R}\right)$.
    It follows that $h$ also induces a cochain homotopy equivalence $h^*:C^*_b\left(X;\mathbb{R}\right)\xrightarrow{\sim}C^*_b\left(G;\mathbb{R}\right)$, inducing an isomorphism
    \[
        H^*\left(C^*_b\left(G;\mathbb{R}\right)\right)\cong H^*_b\left(X;\mathbb{R}\right),
    \]
    which is in fact an isometric isomorphism \cite{frigerio}*{Theorem 5.9}.
    We will denote this cohomology by $H^*_b\left(G;\mathbb{R}\right)$ and interpret it using both points of view.
    
    \begin{rmk}\label{rk:qm-h2b}
        There is a connection between quasimorphisms and bounded cohomology: a quasimorphism $\phi:G\rightarrow\mathbb{R}$ can be seen as an element of $C^1\left(G;\mathbb{R}\right)$, and its coboundary $d^2\phi$ is given by
        \[
            d^2\phi\left(g,h\right)=\phi(g)-\phi(gh)+\phi(h).
        \]
        Hence, the quasimorphism condition means exactly that $d^2\phi$ is a \emph{bounded} cochain, and in fact a bounded cocycle.
        Therefore, it defines a class $\left[d^2\phi\right]\in H^2_b\left(G;\mathbb{R}\right)$.
        This gives a morphism $\left[d^2-\right]:Q(G)\rightarrow H^2_b\left(G;\mathbb{R}\right)$ whose kernel is the subspace $\Hom\left(G,\mathbb{R}\right)$ of $Q(G)$ consisting of homomorphisms to $\mathbb{R}$.
        In fact, this extends to an exact sequence \cite{cal-scl}*{Theorem 2.50}
        \[
            0\rightarrow\Hom\left(G,\mathbb{R}\right)\rightarrow Q(G)\xrightarrow{\left[d^2-\right]}H^2_b\left(G;\mathbb{R}\right)\rightarrow H^2\left(G;\mathbb{R}\right),
        \]
        where $H^2_b\left(G;\mathbb{R}\right)\rightarrow H^2\left(G;\mathbb{R}\right)$ is the map induced by the inclusion $C^*_b\left(G;\mathbb{R}\right)\hookrightarrow C^*\left(G;\mathbb{R}\right)$.
    \end{rmk}
    
    \subsection{Bavard duality for the relative Gromov seminorm}
    
    Our aim is now to use bounded cohomology in order to obtain a statement analogous to Bavard duality (Theorem \ref{thm:bavard}) for the relative Gromov seminorm on $H_2\left(X,\gamma\right)$, where $\gamma:\coprod S^1\rightarrow X$ is a collection of loops in a path-connected topological space $X$.
    
    There is a notion of \emph{relative bounded cohomology}, giving rise to a long exact sequence as in the case of singular cohomology --- we refer the reader to Frigerio's book \cite{frigerio}*{\S{}5.7} for a definition.
    The duality principle \cite{frigerio}*{Lemma 6.1} then implies that $H^2_b\left(X,\gamma\right)\coloneqq H^2_b\left(X_\gamma,\coprod S^1\right)$ (with the $\ell^\infty$-seminorm) is the dual of $H_2\left(X,\gamma\right)$ (with the $\ell^1$-seminorm).
    But since $\pi_1S^1=\mathbb{Z}$ is amenable, $H^*_b\left(\coprod S^1\right)$ vanishes \cite{frigerio}*{Theorem 3.6} and the long exact sequence of $\left(X_\gamma,\coprod S^1\right)$ gives a natural isomorphism
    \[
        H^2_b\left(X,\gamma;\mathbb{R}\right)\cong H^2_b\left(X;\mathbb{R}\right).
    \]
    This isomorphism, together with the Kronecker product $H^2_b\left(X,\gamma\right)\times H_2\left(X,\gamma\right)\rightarrow\mathbb{R}$ (which is the relative analogue of the absolute Kronecker product introduced in \S{}\ref{subsec:hb-top}) defines a pairing
    \[
        \left<\cdot,\cdot\right>:H^2_b\left(X;\mathbb{R}\right)\times H_2\left(X,\gamma;\mathbb{R}\right)\rightarrow\mathbb{R}.
    \]
    
    It turns out that the $\ell^\infty$-seminorm (which is a norm in degree $2$ by Theorem \ref{thm:mmi}) is dual to the $\ell^1$-seminorm under this pairing:
    
    \begin{mthm}{B}[Bavard duality for the relative Gromov seminorm]\label{thm:bav-dual-rel-grom}
        Let $X$ be a countable CW-complex and $\gamma:\coprod S^1\rightarrow X$.
        Given a real class $\alpha\in H_2(X,\gamma;\mathbb{R})$, the relative Gromov seminorm of $\alpha$ is given by
        \[
            \gromnorm{\alpha}=\sup\left\{\frac{\left<u,\alpha\right>}{\inftynorm{u}}\st{}u\in H^2_b\left(X;\mathbb{R};\mathbb{R}\right)\smallsetminus\{0\}\right\}.
        \]
    \end{mthm}
    \begin{proof}
        Duality between $\ell^1$-homology and bounded cohomology \cite{frigerio}*{Lemma 6.1} implies that
        \[
            \gromnorm{\alpha}=\sup\left\{\frac{\left<u,\alpha\right>}{\inftynorm{u}}\st{}u\in H^2_b\left(X,\gamma;\mathbb{R}\right),\:\inftynorm{u}\neq0\right\}.
        \]
        In addition, a result proved independently by Bucher et al.\ \cite{bbfplus}*{Theorem 1.2} and by Kim and Kuessner \cite{kim-kuessner}*{Theorem 1.2} implies that the isomorphism $H^2_b\left(X,\gamma\right)\cong H^2_b\left(X\right)$ is isometric for the $\ell^\infty$-norm.
        Together with the fact that $\inftynorm{u}=0$ only if $u=0$ in $H^2_b(X)$ (Theorem \ref{thm:mmi}), this implies the result.
    \end{proof}
    
    We'll say that a class $u\in H^2_b\left(X;\mathbb{R}\right)$ is \emph{extremal} for $\alpha\in H_2\left(X,\gamma;\mathbb{R}\right)$ if it realises the supremum in Theorem \ref{thm:bav-dual-rel-grom}.
    Note that extremal classes exist for all $\alpha\in H_2\left(X,\gamma;\mathbb{R}\right)$ by the Hahn--Banach Theorem.
    
\section{An application to scl in graphs of groups}\label{sec:hnn}
    
    The aim of this section is to give an example in the context of graphs of groups where Theorem \ref{thm:bav-dual-rel-grom} can be used to understand the relative Gromov seminorm, which then yields computations of stable commutator length.
    
    \subsection{Failure of isometric embedding for scl}
    
    One of the fundamental facts in the theory of graphs of groups is that a vertex group embeds into the fundamental group of the graph of groups.
    It is natural at first to try to make this inclusion map $\scl$-preserving, but that unfortunately does not work in general, even if edge groups are amenable (and hence have vanishing stable commutator length \cite{cal-scl}*{Proposition 2.65}):
        
    \begin{ex}\label{ex:hnn-non-isom}
        Let $S$ be a closed genus-$3$ surface, and let $\beta$ be a non-separating simple closed curve in $S$.
        Consider the HNN-splitting $\pi_1S=\pi_1T\ast_\mathbb{Z}$ obtained by cutting $S$ along $\beta$, where $T$ is a genus-$2$ surface with two boundary components, and the HNN-extension identifies the two boundary components of $T$ --- see Figure \ref{fig:ex:hnn-non-isom}.
        \begin{figure}[htb]
            \centering
            \begin{subfigure}[c]{15em}
                \centering
                \begin{tikzpicture}[every node/.style={rectangle,draw=none,fill=none},xscale=.4, yscale=.53]                
                    \draw [thick] (6.25,2) to (0,2) (0,2) to[bend right=45] (-2,0) (-2,0) to[bend right=45] (0,-2) (0,-2) to (6.25,-2);
                    \draw [thick] (6.25,-2) to (8,-2) (8,-2) to[bend right=45] (10,0) (10,0) to[bend right=45] (8,2) (8,2) to (6.25,2);
                    
                    \draw [draw=none,fill=white] (0,0) ellipse[x radius=.73cm,y radius=.5cm];
                    \draw [thick] (-0.73,0) to[bend left=37] (0,.5) (0,.5) to[bend left=37] (0.73,0)
                        (-0.8,.3) to[bend right=45] (0,-.5) (0,-.5) to[bend right=45] (0.8,.3);
                    
                    \draw [xshift=4cm,draw=none,fill=white] (0,0) ellipse[x radius=.73cm,y radius=.5cm];
                    \draw [xshift=4cm,thick] (-0.73,0) to[bend left=37] (0,.5) (0,.5) to[bend left=37] (0.73,0)
                        (-0.8,.3) to[bend right=45] (0,-.5) (0,-.5) to[bend right=45] (0.8,.3);
                    
                    \draw [xshift=8cm,thick] (-0.73,0) to[bend left=37] (0,.5) (0,.5) to[bend left=37] (0.73,0)
                        (-0.8,.3) to[bend right=45] (0,-.5) (0,-.5) to[bend right=45] (0.8,.3);
                    
                    \draw [ultra thick,Plum] (5.75,-2) to[bend left=20] (5.75,2);
                    \draw [ultra thick,Plum,dashed] (5.75,-2) to[bend right=20] (5.75,2);
                    \draw [ultra thick,Plum,->] (5.35,-.001) -- (5.35,.001);
                    
                    \draw [ultra thick,NavyBlue] (8.73,0) to[bend left=20] (10,0);
                    \draw [ultra thick,NavyBlue,dashed] (8.73,0) to[bend right=20] (10,0);
                    
                    \draw node at (2,0) {\large$S$};
                    \draw node [Plum] at (5,.5) {$\gamma$};
                    \draw node [NavyBlue] at (9.4,.45) {$\beta$};
                \end{tikzpicture}
            \end{subfigure}
            {\LARGE $\phantom{a}=\phantom{a}$}
            \begin{subfigure}[c]{15em}
                \centering
                \begin{tikzpicture}[every node/.style={rectangle,draw=none,fill=none},xscale=.4, yscale=.53]                
                    \draw [thick] (6.25,2) to (0,2) (0,2) to[bend right=45] (-2,0) (-2,0) to[bend right=45] (0,-2) (0,-2) to (6.25,-2);
                    \draw [thick] (6.25,-2) to (8,-2) (8,-2)
                        (8,2) (8,2) to (6.25,2);
                    
                    \draw [draw=none,fill=white] (0,0) ellipse[x radius=.73cm,y radius=.5cm];
                    \draw [thick] (-0.73,0) to[bend left=37] (0,.5) (0,.5) to[bend left=37] (0.73,0)
                        (-0.8,.3) to[bend right=45] (0,-.5) (0,-.5) to[bend right=45] (0.8,.3);
                    
                    \draw [xshift=4cm,draw=none,fill=white] (0,0) ellipse[x radius=.73cm,y radius=.5cm];
                    \draw [xshift=4cm,thick] (-0.73,0) to[bend left=37] (0,.5) (0,.5) to[bend left=37] (0.73,0)
                        (-0.8,.3) to[bend right=45] (0,-.5) (0,-.5) to[bend right=45] (0.8,.3);
                    
                    \draw [xshift=8cm,thick] (-0.65,0) to[bend left=45] (0,.65)
                        (-0.65,0) to[bend right=45] (0,-.65);
                        
                    \draw [NavyBlue,thick] (8,2) to [bend right=25] (8,.65)
                        (8,2) to [bend left=25] (8,.65)
                        (8,-2) to [bend right=25] (8,-.65)
                        (8,-2) to [bend left=25] (8,-.65);
                    
                    \draw [ultra thick,Plum] (5.75,-2) to[bend left=20] (5.75,2);
                    \draw [ultra thick,Plum,dashed] (5.75,-2) to[bend right=20] (5.75,2);
                    \draw [ultra thick,Plum,->] (5.35,-.001) -- (5.35,.001);
                    
                    \draw node [draw,thick,NavyBlue,circle,fill=none,inner sep=7pt] at (10,0) {};
                    
                    \draw [thick,NavyBlue,<->] (8.5,1.25) to [bend left=45] (10,.85);
                    \draw [thick,NavyBlue,<->] (8.5,-1.25) to [bend right=45] (10,-.85);
                    
                    \draw node at (2,0) {\large$T$};
                    \draw node [Plum] at (5,.5) {$\gamma$};
                \end{tikzpicture}
            \end{subfigure}
            \caption{HNN-splitting of a closed genus-$3$ surface.}
            \label{fig:ex:hnn-non-isom}
        \end{figure}        
        Then the embedding $\pi_1T\hookrightarrow\pi_1S$ is \emph{not} $\scl$-preserving.
        To see this, consider the loop $\gamma$ represented in the picture.
        Note that $T$ is a surface with non-empty boundary, and $\gamma$ bounds an immersed (and in fact embedded) genus-$2$ surface with one boundary component in $T$, so a result of Calegari \cite{cal-scl}*{Lemma 4.62} (which also follows from Theorem \ref{thm:eub-extr} below) implies that $\scl_{\pi_1T}\left([\gamma]\right)=\frac{3}{2}$.
        In $S$ however, $\gamma$ bounds a genus-$1$ surface with one boundary component, so $\scl_{\pi_1S}\left([\gamma]\right)\leq\frac{1}{2}$.
        This shows that the morphism
        \[
            \pi_1T\hookrightarrow\pi_1T\ast_\mathbb{Z}
        \]
        is not $\scl$-preserving
    \end{ex}
    
    \subsection{Isometric embedding for the relative Gromov seminorm}
    
    Example \ref{ex:hnn-non-isom} shows that the inclusion map of a vertex group in a graph of groups can fail to be $\scl$-preserving.
    However, using Theorem \ref{thm:bav-dual-rel-grom}, we are able to translate an isometric embedding result of Bucher et al.\ \cite{bbfplus} in bounded cohomology into the fact that the inclusion map preserves the relative Gromov seminorm if edge groups are amenable:
    
    \begin{mthm}{C}[$\ell^1$-isometric embedding of vertex groups in graphs of groups]\label{thm:isom-grph-gps}
        Let $\mathcal{G}$ be a graph of groups whose underlying graph $\Gamma$ is finite, with countable vertex groups $\left\{G_v\right\}_{v\in V(\Gamma)}$, and amenable edge groups $\left\{G_e\right\}_{e\in E(\Gamma)}$.
        Fix a vertex $v$ and consider the inclusion map $i_v:G_v\hookrightarrow\pi_1\mathcal{G}$.
        Then for each class $[c]\in C_1^{\hmtp}\left(G_v;\mathbb{Z}\right)$, the embedding
        \[
            {i_v}_*:H_2\left(G_v,[c];\mathbb{R}\right)\hookrightarrow H_2\left(\pi_1\mathcal{G},\left[i_v(c)\right];\mathbb{R}\right).
        \]
        is isometric for $\gromnorm{\cdot}$.
    \end{mthm}
    \begin{proof}
        By \cite{bbfplus}*{Theorem 1.1}, there is an isometric embedding
        \[
            \textstyle\Theta:\bigoplus_v H^2_b\left(G_v;\mathbb{R}\right)\hookrightarrow H^2_b\left(\pi_1\mathcal{G};\mathbb{R}\right),
        \]
        which is a right inverse to
        \[
            \textstyle\bigoplus_vi_v^*:H^2_b\left(\pi_1\mathcal{G};\mathbb{R}\right)\twoheadrightarrow\bigoplus_v H^2_b\left(G_v;\mathbb{R}\right).
        \]
        Now let $[c]\in C_1^{\hmtp}\left(G_v;\mathbb{Z}\right)$ and $\alpha\in H_2\left(G_v,[c];\mathbb{R}\right)$.
        Bavard duality for $\gromnorm{\cdot}$ (Theorem \ref{thm:bav-dual-rel-grom}) implies that
        \[
            \gromnorm{\alpha}=\sup\left\{\frac{\left<u,\alpha\right>}{\inftynorm{u}}\st{}u\in H^2_b\left(G_v;\mathbb{R}\right)\right\}.
        \]
        Since $i_v^*\Theta u=u$ for all $u\in H^2_b\left(G_v;\mathbb{R}\right)$, and since $\Theta$ preserves $\inftynorm{\cdot}$, it follows that
        \begin{align*}
            \gromnorm{\alpha}
            &=\sup\left\{\frac{\left<i_v^*\Theta u,\alpha\right>}{\inftynorm{\Theta u}}\st{}u\in H^2_b\left(G_v;\mathbb{R}\right)\right\}
            =\sup\left\{\frac{\left<\Theta u,{i_v}_*\alpha\right>}{\inftynorm{\Theta u}}\st{}u\in H^2_b\left(G_v;\mathbb{R}\right)\right\}\\
            &\leq\sup\left\{\frac{\left<u',{i_v}_*\alpha\right>}{\inftynorm{u'}}\st{}u'\in H^2_b\left(\pi_1\mathcal{G};\mathbb{R}\right)\right\}
            =\gromnorm{{i_v}_*\alpha}.
        \end{align*}
        This proves that $\gromnorm{\alpha}\leq\gromnorm{{i_v}_*\alpha}$, and the reverse inequality follows from the fact that group homomorphisms are $\gromnorm{\cdot}$-non-increasing.
    \end{proof}
    
    With an extra homological condition, we can deduce an isometric embedding result for $\scl$:
    
    \begin{cor}[$\scl$-isometric embedding of vertex groups in graphs of groups]\label{cor:scl-isom-grph-gps}
        Let $\mathcal{G}$ be a graph of groups whose underlying graph $\Gamma$ is finite, with countable vertex groups $\left\{G_v\right\}_{v\in V(\Gamma)}$, and amenable edge groups $\left\{G_e\right\}_{e\in E(\Gamma)}$.
        Fix a vertex $v$ and assume that the inclusion-induced map ${i_v}_*:H_2\left(G_v;\mathbb{Q}\right)\rightarrow H_2\left(\pi_1\mathcal{G};\mathbb{Q}\right)$ is surjective.
        Then for every $[c]\in B_1^{\hmtp}\left(G_v;\mathbb{Z}\right)$,
        \[
            \scl_{\pi_1\mathcal{G}}\left(\left[{i_v}(c)\right]\right)=\scl_{G_v}\left([c]\right).
        \]
    \end{cor}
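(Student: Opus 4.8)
The plan is to combine Theorem \ref{thm:isom-grph-gps} with the topological description of $\scl$ in Proposition \ref{prop:scl-gromnorm}. First note that $\pi_1\mathcal{G}$ is countable (since $\Gamma$ is finite and the vertex groups are countable) and that $\left[i_v(c)\right]\in B_1^{\hmtp}\left(\pi_1\mathcal{G};\mathbb{Z}\right)$ because $[c]\in B_1^{\hmtp}\left(G_v;\mathbb{Z}\right)$, so both sides of the asserted equality are finite. Choosing a $K\left(G_v,1\right)$ space $X$ and a map $\gamma:\coprod S^1\to X$ representing $[c]$, Proposition \ref{prop:scl-gromnorm} gives
\[
    \scl_{G_v}\left([c]\right)=\frac{1}{4}\inf_{\alpha\in A}\gromnorm{\alpha}\qquad\text{and}\qquad\scl_{\pi_1\mathcal{G}}\left(\left[i_v(c)\right]\right)=\frac{1}{4}\inf_{\beta\in B}\gromnorm{\beta},
\]
where $A=\left\{\alpha\in H_2\left(G_v,[c];\mathbb{Q}\right)\st{}\partial\alpha=\left[\textstyle\coprod S^1\right]\right\}$ and $B$ is the analogous affine subspace of $H_2\left(\pi_1\mathcal{G},\left[i_v(c)\right];\mathbb{Q}\right)$. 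It therefore suffices to show that $\inf_{\alpha\in A}\gromnorm{\alpha}=\inf_{\beta\in B}\gromnorm{\beta}$.

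The map ${i_v}_*$ is compatible with the boundary maps $\partial$ of the long exact sequences of Proposition \ref{prop:les} for the two pairs, so it carries $A$ into $B$; since $\gromnorm{\cdot}$ is non-increasing under maps induced by group homomorphisms, this already gives $\inf_{\alpha\in A}\gromnorm{\alpha}\geq\inf_{\beta\in B}\gromnorm{\beta}$. For the reverse inequality I would prove that ${i_v}_*$ maps $A$ \emph{onto} $B$. First, $A$ is nonempty: since $[c]$ is a boundary, the map $\gamma_*:H_1\left(\coprod S^1;\mathbb{Q}\right)\to H_1\left(G_v;\mathbb{Q}\right)$ kills $\left[\textstyle\coprod S^1\right]$, which is therefore in the image of $\partial$ by exactness. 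Now fix $\alpha_0\in A$ and take any $\beta\in B$; then $\partial\left(\beta-{i_v}_*\alpha_0\right)=0$, so by exactness $\beta-{i_v}_*\alpha_0$ is the image of some $\xi\in H_2\left(\pi_1\mathcal{G};\mathbb{Q}\right)$ under the injection $H_2\left(\pi_1\mathcal{G};\mathbb{Q}\right)\hookrightarrow H_2\left(\pi_1\mathcal{G},\left[i_v(c)\right];\mathbb{Q}\right)$ of Proposition \ref{prop:les}. The hypothesis that ${i_v}_*:H_2\left(G_v;\mathbb{Q}\right)\to H_2\left(\pi_1\mathcal{G};\mathbb{Q}\right)$ is surjective lets us write $\xi={i_v}_*\eta$ for some $\eta\in H_2\left(G_v;\mathbb{Q}\right)$, and then $\beta={i_v}_*\left(\alpha_0+\eta\right)$ with $\alpha_0+\eta\in A$.

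Finally, Theorem \ref{thm:isom-grph-gps} states that ${i_v}_*$ is isometric for $\gromnorm{\cdot}$ on $H_2\left(G_v,[c];\mathbb{R}\right)$, and by Lemma \ref{lmm:l1-rat-classes} the same holds over $\mathbb{Q}$, so $\gromnorm{{i_v}_*\alpha}=\gromnorm{\alpha}$ for all $\alpha\in A$. Together with the surjection ${i_v}_*:A\twoheadrightarrow B$, this gives $\left\{\gromnorm{\alpha}\st{}\alpha\in A\right\}=\left\{\gromnorm{\beta}\st{}\beta\in B\right\}$, hence $\inf_{\alpha\in A}\gromnorm{\alpha}=\inf_{\beta\in B}\gromnorm{\beta}$ and the corollary follows. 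I expect the surjectivity ${i_v}_*:A\twoheadrightarrow B$ to be the only non-formal step: it is the short diagram chase above, and it is the only place where the hypothesis on $H_2$ is used, the rest being a formal consequence of Theorem \ref{thm:bav-dual-rel-grom} via Theorem \ref{thm:isom-grph-gps}.
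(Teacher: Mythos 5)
Your proof is correct and follows essentially the same route as the paper: reduce via Proposition \ref{prop:scl-gromnorm} to comparing infima of $\gromnorm{\cdot}$ over the affine sets of classes with boundary $\left[\coprod S^1\right]$, use the $H_2$-surjectivity hypothesis and the long exact sequence of Proposition \ref{prop:les} to lift each such class of $\left(\pi_1\mathcal{G},\left[i_v(c)\right]\right)$ to one of $\left(G_v,[c]\right)$, and conclude with the isometry of Theorem \ref{thm:isom-grph-gps}. The only cosmetic difference is that you carry out the lifting by an explicit exactness chase from a fixed $\alpha_0\in A$, where the paper invokes the Five Lemma on the commutative diagram of exact sequences.
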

    \begin{proof}
        Fix a $K\left(G_v,1\right)$ space $X_v$ for each vertex $v$ and a $K\left(G_e,1\right)$ space $X_e$ for each edge $e$, and form the corresponding graph of spaces $\mathcal{X}$, which is a $K\left(\pi_1\mathcal{G},1\right)$.
        Let $j_v:X_v\hookrightarrow\mathcal{X}$ be the inclusion map, so that $i_v={j_v}_*:G_v\rightarrow\pi_1\mathcal{G}$.
        
        Given a map $\gamma:\coprod S^1\rightarrow X_v$ representing a conjugacy class $[c]\in B_1^{\hmtp}\left(G_v;\mathbb{Z}\right)$, the map $\gamma_*:H_1\left(\coprod S^1\right)\rightarrow H_1\left(X_v\right)$ vanishes, and Proposition \ref{prop:les} gives a commutative diagram with exact rows (with omitted $\mathbb{Q}$-coefficients):
            \[
                \vcenter{\hbox{\begin{tikzpicture}[every node/.style={draw=none,fill=none,rectangle},xscale=1.1]
                    \node (Z) at (-1.5,-1.25) {$0$};
                    \node (Zp) at (-1.5,0) {$0$};
                    \node (A) at (0,-1.25) {$H_2\left(\mathcal{X}\right)$};
                    \node (Ap) at (0,0) {$H_2\left(X_v\right)$};
                    \node (B) at (2.25,-1.25) {$H_2\left(\mathcal{X},j_v\gamma\right)$};
                    \node (Bp) at (2.25,0) {$H_2\left(X_v,\gamma\right)$};
                    \node (C) at (4.5,-1.25) {$H_1\left(\coprod S_1\right)$};
                    \node (Cp) at (4.5,0) {$H_1\left(\coprod S^1\right)$};
                    \node (D) at (6,-1.25) {$0$};
                    \node (Dp) at (6,0) {$0$};
                    \node [rotate=90,xscale=2,yscale=1.2] at (4.5,-.625) {$=$};
                    
                    \draw [<-] (A) -> (Ap) node [midway,right] {${j_v}_*$};
                    \draw [<-] (B) -> (Bp) node [midway,right] {${j_v}_*$};
                    \draw [->] (Z) -> (A);
                    \draw [->] (Zp) -> (Ap);
                    \draw [->] (A) -> (B);
                    \draw [->] (Ap) -> (Bp);
                    \draw [->] (B) -> (C) node [midway,above] {$\partial$};
                    \draw [->] (Bp) -> (Cp) node [midway,above] {$\partial$};
                    \draw [->] (C) -> (D);
                    \draw [->] (Cp) -> (Dp);
                \end{tikzpicture}}}
            \]
        Now the map ${j_v}_*:H_2\left(X_v\right)\rightarrow H_2\left(\mathcal{X}\right)$ is surjective by assumption, so the Five Lemma implies that ${j_v}_*:H_2\left(X_v,\gamma\right)\rightarrow H_2\left(\mathcal{X},j_v\gamma\right)$ is surjective.
        Hence, given a class $\beta\in H_2\left(\mathcal{X},j_v\gamma;\mathbb{Q}\right)$ with $\partial\beta=\left[\coprod S^1\right]$, there exists $\alpha\in H_2\left(X_v,\gamma;\mathbb{Q}\right)$ with ${j_v}_*\alpha=\beta$.
        Since the diagram commutes, we have $\partial\alpha=\partial\beta=\left[\coprod S^1\right]$.
        Therefore, Proposition \ref{prop:scl-gromnorm} and Theorem \ref{thm:isom-grph-gps} yield
        \[
            \scl_{G_v}\left([c]\right)\leq\frac{1}{4}\gromnorm{\alpha}=\frac{1}{4}\gromnorm{{j_v}_*\alpha}=\frac{1}{4}\gromnorm{\beta}.
        \]
        Taking the infimum over $\beta$ gives $\scl_{G_v}\left([c]\right)\leq\scl_{\pi_1\mathcal{G}}\left(\left[i_v(c)\right]\right)$.
        The reverse inequality follows from the general fact that group homomorphisms are $\scl$-non-increasing.
    \end{proof}
    
    Note that Theorem \ref{thm:isom-grph-gps} and Corollary \ref{cor:scl-isom-grph-gps} recover the main theorems of \cite{m:isom-embed} on isometric embeddings of surfaces for the relative Gromov seminorm: indeed, given an oriented, compact, connected surface $S$ and a $\pi_1$-injective subsurface $T$, the fundamental group $\pi_1S$ splits as a graph of groups, with vertex groups given by $\pi_1T$ and the fundamental groups of all connected components of $S\smallsetminus T$, and with edge groups isomorphic to $\mathbb{Z}$ --- and corresponding to cutting $S$ along simple closed curves.
    
    \subsection{Spectral gaps in HNN-extensions}\label{subsec:spc-gap-hnn}
    
    Corollary \ref{cor:scl-isom-grph-gps} can be used for instance to estimate the spectral gap for $\scl$ of certain HNN-extensions:
        
    \begin{ex}[Dyck's surface]\label{ex:dyck}
        Let $\varDelta$ be the non-orientable surface given by the side-pairing of Figure \ref{fig:dyck-pairing}, so that $\pi_1\varDelta=\left<a,b,c\st{}\left[a,b\right]=c^2\right>$.
        \begin{figure}[htb]
                \centering
                \begin{tikzpicture}[every node/.style={rectangle,draw=none,fill=none},scale=.65]
                    \foreach \i in {0,1,...,5}{\coordinate (c\i) at (\i*360/6:4.5em);}
                    
                    \draw [draw=none,pattern={Lines[angle=45,distance=7pt]},pattern color=gray] (c0) -- (c1) -- (c2) -- (c3) -- (c4) -- (c5) -- cycle;
                    
                    \foreach \i in {0,1,...,5}{\node [draw,circle,fill,inner sep=1pt] (x\i) at (c\i) {};}
                    
                    \draw [red,thick] (x0) -- (x1) coordinate [midway] (mid);
                    \draw [red,thick,->] (x0) -- (mid) node [above right] {$a$};
                    
                    \draw [ForestGreen,thick] (x1) -- (x2) coordinate [midway] (mid);
                    \draw [ForestGreen,thick,->] (x1) -- (mid) node [above] {$b$};
                    
                    \draw [red,thick] (x2) -- (x3) coordinate [midway] (mid);
                    \draw [red,thick,->] (x3) -- (mid) node [above left] {$a$};
                    
                    \draw [ForestGreen,thick] (x3) -- (x4) coordinate [midway] (mid);
                    \draw [ForestGreen,thick,->] (x4) -- (mid) node [below left] {$b$};
                    
                    \draw [NavyBlue,thick] (x4) -- (x5) coordinate [midway] (mid);
                    \draw [NavyBlue,thick,->] (x5) -- (mid) node [below] {$c$};
                    
                    \draw [NavyBlue,thick] (x5) -- (x0) coordinate [midway] (mid);
                    \draw [NavyBlue,thick,->] (x0) -- (mid) node [below right] {$c$};
                \end{tikzpicture}
            \caption{Side-pairing for Dyck's surface $\varDelta$.}
            \label{fig:dyck-pairing}
        \end{figure}
        Dyck's Theorem \cite{dyck} asserts that $\varDelta\cong\mathbb{RP}^2\#\mathbb{RP}^2\#\mathbb{RP}^2$.
        
        Then for all $g\in\pi_1\varDelta\smallsetminus\{1\}$, there is an inequality
        \[
            \scl_{\pi_1\varDelta}\left([g]\right)\geq\frac{1}{4}.
        \]
        Moreover, this bound is sharp: $\scl_{\pi_1\varDelta}\left([c]\right)=\frac{1}{4}$.
    \end{ex}
    \begin{proof}
        The group $\pi_1\varDelta$ splits as an HNN-extension $\pi_1\varDelta=G\ast_{\mathbb{Z}}$, where
        \[
            G=\left<a_1,a_2,c\st{}a_1a_2=c^2\right>,
        \]
        and the HNN-extension is given by the isomorphism $\left<a_1\right>\cong\left<a_2\right>$ sending $a_1$ to $a_2$.
        
        Note that $G$ is a free group, and both $\left\{a_1,c\right\}$ and $\left\{a_2,c\right\}$ are free bases of $G$.
        It follows that $\left<a_1\right>$ and $\left<a_2\right>$ are \emph{left relatively convex} in $G$, meaning that the $G$-sets $G/\left<a_1\right>$ and $G/\left<a_2\right>$ admit $G$-invariant orders.
        This will allow us to apply results of Chen and Heuer \cite{chen-heuer} on spectral gaps in graphs of groups.
        
        For $g\in\pi_1\varDelta\smallsetminus\{1\}$, there are two cases:
        \begin{itemize}
            \item If $g$ is hyperbolic for the HNN-splitting $G\ast_\mathbb{Z}$, then since $\left<a_1\right>$ and $\left<a_2\right>$ are left relatively convex in $G$, it follows from \cite{chen-heuer}*{Theorem 5.19} that $\scl_{\pi_1\varDelta}\left([g]\right)\geq\frac{1}{2}$.
            \item If $g$ is elliptic, then $g$ is conjugate to a non-trivial element $g_0$ of $G$.
            Since $G$ is a free group, the Duncan--Howie Theorem \cite{duncan-howie} implies that $\scl_G\left(\left[g_0\right]\right)\geq\frac{1}{2}$.
            Now Corollary \ref{cor:scl-isom-grph-gps} implies that $\scl_{\pi_1\varDelta}\left([g]\right)=\scl_G\left(\left[g_0\right]\right)\geq\frac{1}{2}$ whenever some multiple of $\left[g_0\right]$ lies in $B_1^{\hmtp}\left(G;\mathbb{Z}\right)$.
        \end{itemize}
        It now follows that any $g\in\pi_1\varDelta$ for which $\scl_{\pi_1\varDelta}\left([g]\right)<\frac{1}{2}$ must be conjugate to some $g_0\in G$ whose image in $H_1\left(G;\mathbb{Q}\right)$ lies in $\Ker\left(H_1\left(G;\mathbb{Q}\right)\rightarrow H_1\left(\pi_1\varDelta;\mathbb{Q}\right)\right)$ (since $\left[g_0\right]$ has finite $\scl$ if and only if some multiple of $\left[g_0\right]$ lies in $B_1^{\hmtp}\left(G;\mathbb{Z}\right)$, if and only if the image of $\left[g_0\right]$ in $H_1\left(G;\mathbb{Q}\right)$ vanishes --- see Remark \ref{rk:compute-homol-homog}).
        But we have
        \[
            H_1\left(G;\mathbb{Q}\right)=\mathbb{Q}a_1\oplus\mathbb{Q}c
            \xrightarrow{\substack{a_1\mapsto a\\\phantom{_1}c\mapsto 0}}
            \mathbb{Q}a\oplus\mathbb{Q}b=H_1\left(\pi_1\varDelta;\mathbb{Q}\right),
        \]
        so $\Ker\left(H_1\left(G;\mathbb{Q}\right)\rightarrow H_1\left(\pi_1\varDelta;\mathbb{Q}\right)\right)=\mathbb{Q}c$.
        Therefore, any element $g$ of $\pi_1\varDelta$ with $\scl_{\pi_1\varDelta}\left([g]\right)<\frac{1}{2}$ must be conjugate into $\left<c\right>$.
        
        Now $\scl_{\pi_1\varDelta}\left([c]\right)=\frac{1}{2}\scl_{\pi_1\varDelta}\left(\left[[a,b]\right]\right)$ since $c^2=\left[a,b\right]$.
        But $\scl_{\pi_1\varDelta}\left(\left[[a,b]\right]\right)\geq\frac{1}{2}$ since $[a,b]$ is hyperbolic in $G\ast_\mathbb{Z}$, and it is a general fact that the $\scl$ of a commutator is at most $\frac{1}{2}$ (since any commutator is bounded by a genus-$1$ surface with one boundary component, see also \cite{culler}*{Example 2.6}), so $\scl_{\pi_1\varDelta}\left([c]\right)=\frac{1}{4}$ as wanted.
    \end{proof}
        
    In particular, the Duncan--Howie Theorem \cite{duncan-howie} implies that $\pi_1\varDelta$ is not residually free (this also follows from a result of Lyndon \cites{lyndon,lyndon-schuetzenberger}).
    Moreover, it follows from a theorem of Heuer \cite{heuer-raags} (see also \cite{m:ang-str}) that $\pi_1\varDelta$ is not a subgroup of any right-angled Artin group, and thus not special\footnote{To be more precise, $\pi_1\varDelta$ is not \emph{$A$-special} in the terminology of \cite{haglund-wise}.} in the sense of Haglund and Wise \cite{haglund-wise}.
    Note however that $\pi_1\varDelta$ is virtually special since its orientation double cover is an orientable closed surface of genus $2$.
    
\section{An algebraic interpretation à la Hopf}\label{sec:hopf}
    
    We now prove a relative version of the Hopf formula, and explain how this can be used to provide a purely algebraic interpretation of Theorem \ref{thm:bav-dual-rel-grom}.
    We focus on the special case of the homology of a group relative to the conjugacy class of an element (rather than that of a chain).
    An analogous Hopf formula could be given in the general case, but the notation would become cumbersome.
    
    \subsection{A relative Hopf formula}
    
    Recall that the classical Hopf formula computes $H_2(G)$ when $G$ is a group given by a presentation (see \cite{brown}*{Theorem II.5.3}):
    \begin{thm}[Hopf formula \cite{hopf}]\label{thm:hopf}
        Let $F$ be a free group, $R\trianglelefteq F$, and $G=F/R$.
        Then there is an isomorphism
        \[
            H_2\left(G;\mathbb{Z}\right)\cong R\cap[F,F]/[F,R].
        \]
    \end{thm}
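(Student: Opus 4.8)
The plan is to deduce the formula from the five-term exact sequence in homology attached to the short exact sequence $1\to R\to F\to G\to 1$, i.e.\ the low-degree part of the Lyndon--Hochschild--Serre spectral sequence (see \cite{brown}). It reads
\[
    H_2(F;\mathbb{Z})\longrightarrow H_2(G;\mathbb{Z})\longrightarrow H_1(R;\mathbb{Z})_G\longrightarrow H_1(F;\mathbb{Z})\longrightarrow H_1(G;\mathbb{Z})\longrightarrow 0,
\]
where $H_1(R;\mathbb{Z})_G$ denotes the $G$-coinvariants of $H_1(R;\mathbb{Z})$ for the action of $G=F/R$ induced by conjugation of $F$ on $R$.

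First I would identify the four left-hand terms. Since $F$ is free, a wedge of circles is a $1$-dimensional $K(F,1)$, so $H_2(F;\mathbb{Z})=0$ and the map $H_2(G;\mathbb{Z})\to H_1(R;\mathbb{Z})_G$ is injective. By Hurewicz, $H_1(R;\mathbb{Z})=R/[R,R]$, and a direct computation shows that its $G$-coinvariants are $R/[F,R]$: passing to coinvariants amounts to further quotienting by the classes of the commutators $[r,f]$ with $r\in R$ and $f\in F$, which together with $[R,R]$ generate $[F,R]$. Likewise $H_1(F;\mathbb{Z})=F/[F,F]$. The sequence thus becomes
\[
    0\longrightarrow H_2(G;\mathbb{Z})\longrightarrow R/[F,R]\xrightarrow{\ \iota\ }F/[F,F]\longrightarrow H_1(G;\mathbb{Z})\longrightarrow 0.
\]

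The one point that genuinely requires checking is that $\iota$ is the map \emph{induced by the inclusion} $R\hookrightarrow F$ on first homology (this is well-defined on the coinvariants $H_1(R;\mathbb{Z})_G$ because conjugation acts trivially on group homology). This is the standard description of the relevant edge homomorphism of the Lyndon--Hochschild--Serre spectral sequence, and I expect verifying it to be the main obstacle to a fully self-contained write-up --- the alternative being to construct the five-term sequence directly from a partial free resolution of $\mathbb{Z}$ over $\mathbb{Z}G$ and to read off $\iota$ from there. Granting it, the kernel of $\iota$ consists exactly of the cosets modulo $[F,R]$ of elements of $R\cap[F,F]$ (note $[F,R]\subseteq R\cap[F,F]$), so
\[
    H_2(G;\mathbb{Z})\;\cong\;\ker\iota\;=\;\big(R\cap[F,F]\big)/[F,R],
\]
as claimed. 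A more topological alternative, in the spirit of the rest of the paper, would start from the presentation $2$-complex $K$ of $G=F/R$: one has $H_2(K)=Z_2(K)$ since $K$ is $2$-dimensional, one builds a $K(G,1)$ from $K$ by attaching cells of dimension $\ge 3$, and one analyses how the $3$-cells cut $H_2(K)$ down to $H_2(G)$ --- but keeping track of the $\mathbb{Z}G$-module structure there is as delicate as pinning down $\iota$ above.
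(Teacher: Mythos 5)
Your argument is correct, but note that the paper does not actually prove Theorem \ref{thm:hopf}: it is quoted as a classical result with a pointer to \cite{brown}*{Theorem II.5.3}, so there is no in-paper proof to match. Your route is the standard algebraic one: the five-term exact sequence in homology for $1\to R\to F\to G\to 1$, together with $H_2(F;\mathbb{Z})=0$ for $F$ free, the identification $H_1(R;\mathbb{Z})_G\cong R/[F,R]$ (your computation of the coinvariants is right, since $[R,R]$ together with the commutators $[f,r]$ generate $[F,R]$), and the fact that the middle map is induced by the inclusion $R\hookrightarrow F$, whose kernel is visibly $\left(R\cap[F,F]\right)/[F,R]$. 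The one point you flag --- identifying that edge homomorphism --- is indeed the only delicate step, but it is completely standard and can be checked, as you say, from an explicit partial free resolution; it is not a gap. The contrast worth noting is with what the paper does prove, namely the relative version (Theorem \ref{thm:rel-hopf}): there the argument is topological, building for each expression as a product of commutators an admissible surface $\Sigma_{k,1}\rightarrow X$ and showing the resulting map $\Phi$ is a well-defined surjection with kernel $[F,R]$, with the classical formula recovered as the restriction to $R\cap[F,F]$ (Remark \ref{rk:rel-hopf}, and \cite{putman} for the explicit classical isomorphism). Your spectral-sequence argument is shorter and uses standard machinery, but it produces the isomorphism less explicitly; the paper's surface-theoretic construction is what makes the later evaluation against bounded cocycles in Theorem \ref{thm:bavard-hopf} possible, and your sketched topological alternative via the presentation $2$-complex is essentially a step in that direction.
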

    
    With the same setup as in Theorem \ref{thm:hopf}, our goal is to compute $H_2\left(G,[w];\mathbb{Z}\right)$, where $[w]\in C_1^{\hmtp}\left(G;\mathbb{Z}\right)$ is an integral conjugacy class represented by an element $w\in G$ (see Remark \ref{rk:homot-clss-chns}\ref{rk:homot-clss-chns:1} for the relation between conjugacy classes of chains and of elements).
    This is provided by the following theorem; our proof is topological and inspired by \cite{cal-scl}*{\S{}1.1.6} and \cite{putman}.
    
    \begin{thm}[Relative Hopf formula]\label{thm:rel-hopf}
        Let $F$ be a free group, $R\trianglelefteq F$, and $G=F/R$.
        Let $w$ be an infinite-order element of $G$, and let $\bar{w}\in F$ be a preimage of $w$ under $F\xrightarrow{p}F/R$.
        Then there is an isomorphism
        \[
            H_2\left(G,[w];\mathbb{Z}\right){\cong}\left<\bar{w}\right>R\cap[F,F]/[F,R].
        \]
    \end{thm}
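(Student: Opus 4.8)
The plan is to deduce the relative Hopf formula from three ingredients: the long exact sequence of the pair $(X,\gamma)$ (Proposition~\ref{prop:les}), the classical Hopf formula (Theorem~\ref{thm:hopf}), and an elementary computation inside $F$. Fix a $K(G,1)$ space $X$ and a loop $\gamma\colon S^1\to X$ representing the conjugacy class of $w$, so that $H_2(G,[w];\mathbb{Z})=H_2(X,\gamma;\mathbb{Z})$ by Definition~\ref{def:relhom-gp}. Proposition~\ref{prop:les} with integer coefficients gives an exact sequence
\[
0\to H_2(X;\mathbb{Z})\to H_2(X,\gamma;\mathbb{Z})\xrightarrow{\ \partial\ }H_1(S^1;\mathbb{Z})\xrightarrow{\ \gamma_*\ }H_1(X;\mathbb{Z}),
\]
exhibiting $H_2(G,[w];\mathbb{Z})$ as an extension of $\Ker\gamma_*$ by $H_2(X;\mathbb{Z})=H_2(G;\mathbb{Z})$. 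Since $\Ker\gamma_*$ is a subgroup of $H_1(S^1;\mathbb{Z})\cong\mathbb{Z}$, it is free, so the extension splits:
\[
H_2(G,[w];\mathbb{Z})\cong H_2(G;\mathbb{Z})\oplus\Ker\gamma_*.
\]
Identifying $H_1(S^1;\mathbb{Z})\cong\mathbb{Z}$ and $H_1(X;\mathbb{Z})\cong H_1(G;\mathbb{Z})=G/[G,G]$, the map $\gamma_*$ sends a generator to the image of $w$, so $\Ker\gamma_*=\Ker\big(\mathbb{Z}\xrightarrow{w}G/[G,G]\big)$.

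Now Theorem~\ref{thm:hopf} identifies $H_2(G;\mathbb{Z})$ with $R\cap[F,F]/[F,R]$, so it is enough to produce an isomorphism
\[
\frac{\langle\bar w\rangle R\cap[F,F]}{[F,R]}\ \cong\ \frac{R\cap[F,F]}{[F,R]}\ \oplus\ \Ker\big(\mathbb{Z}\xrightarrow{w}G/[G,G]\big).
\]
Here I would use that, because $w$ has infinite order, the subgroup $\langle\bar w\rangle R=R\langle\bar w\rangle\leq F$ (the preimage of $\langle w\rangle$ under $F\to G$) satisfies $\langle\bar w\rangle R/R\cong\mathbb{Z}$, and every one of its elements is uniquely of the form $\bar w^k r$ with $k\in\mathbb{Z}$ and $r\in R$; this defines a homomorphism $\nu\colon\langle\bar w\rangle R\to\mathbb{Z}$, $\bar w^k r\mapsto k$. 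The remaining verifications are routine: the restriction of $\nu$ to $\langle\bar w\rangle R\cap[F,F]$ has kernel $R\cap[F,F]$, and its image is $\Ker(\mathbb{Z}\xrightarrow{w}G/[G,G])$ --- since $\bar w^k r\in[F,F]$ says that the image of $\bar w^k$ in $F/[F,F]$ equals that of $r^{-1}$, which, as $r$ ranges over $R$, is possible exactly when $k$ times the image of $\bar w$ in $F/[F,F]$ lies in the image of $R$, i.e.\ when $k$ times the image of $w$ in $G/[G,G]$ vanishes. Dividing by $[F,R]\subseteq R\cap[F,F]$ then yields a short exact sequence with free cokernel, hence a splitting; combining this with the two isomorphisms above proves the theorem.

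I do not expect a serious obstacle in the argument above, but I do expect a mild defect: the isomorphism it produces is assembled from choices of splittings and is not manifestly natural, whereas a genuine ``Hopf formula'' should come with a canonical map. To get an explicit isomorphism restricting to the classical Hopf isomorphism on the summand $H_2(G;\mathbb{Z})\hookrightarrow H_2(G,[w];\mathbb{Z})$, I would instead imitate the topological proof of Theorem~\ref{thm:hopf} (compare \cite{cal-scl}*{\S{}1.1.6} and \cite{putman}) in the relative setting: build $X$ from a presentation complex $\langle S\mid (r_j)_j\rangle$ of $G$ by attaching cells of dimension $\geq 3$, use the identification $H_2(X,\gamma;\mathbb{Z})\cong H_2(X\cup_\gamma D^2;\mathbb{Z})$ with the extra $2$-cell $e_{\bar w}$ attached along the edge-loop $\bar w$, and send a cellular relative $2$-cycle $\sum_j n_j e_j+m\,e_{\bar w}$ to the class of $\bar w^{\,m}\prod_j r_j^{\,n_j}$ in $\langle\bar w\rangle R\cap[F,F]/[F,R]$. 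The hard part is then exactly the same bookkeeping as in the absolute Hopf formula --- showing this assignment is independent of the conjugating elements implicit in the product and descends to an isomorphism on $H_2$ --- with the hypothesis that $w$ has infinite order ensuring that the coefficient $m$, and hence the power of $\bar w$, is unambiguous.
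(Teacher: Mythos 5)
Your argument is correct, but it takes a genuinely different route from the paper. The paper proves the theorem by constructing an explicit map $\Phi:\left<\bar{w}\right>R\cap[F,F]\rightarrow H_2(X,\gamma;\mathbb{Z})$: a product of $k$ commutators equal to $\bar{w}^nr$ is realised as an admissible map of a genus-$k$ surface with one boundary component whose boundary wraps $n$ times around $\gamma$ (the infinite order of $w$ making $n$ unambiguous), well-definedness is checked via the vanishing of $H_2$ of a free group, surjectivity comes from the topological interpretation of the relative Gromov seminorm (Proposition \ref{prop:ell1-gromnorm} and Lemma \ref{lmm:simple-incompr-adm-srf}), and the kernel is identified with $[F,R]$ by reducing to the classical Hopf formula. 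You instead split off the boundary contribution on both sides: the integral long exact sequence of $(X,\gamma)$ together with freeness of $\Ker\gamma_*\subseteq\mathbb{Z}$ gives $H_2(G,[w];\mathbb{Z})\cong H_2(G;\mathbb{Z})\oplus\Ker\left(\mathbb{Z}\xrightarrow{w}G^{\mathrm{ab}}\right)$, while your homomorphism $\nu:\bar{w}^kr\mapsto k$ (well defined exactly because $w$ has infinite order) gives the matching decomposition of $\left<\bar{w}\right>R\cap[F,F]/[F,R]$; for that last step you should note explicitly that this quotient is abelian (the image of $R$ is central in $F/[F,R]$, so elements $\bar{w}^kr$ commute modulo $[F,R]$), since otherwise a splitting of the sequence would only give a semidirect product. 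With that small addition your proof establishes the stated isomorphism, and it is shorter and purely algebraic; what the paper's construction buys is a canonical, explicit isomorphism $\Psi$, which is precisely what is used later in Theorem \ref{thm:bavard-hopf}, so your own diagnosis of the defect is accurate, and your sketched remedy via an explicit cellular/presentation-complex map is essentially the paper's construction in a different model (surfaces labelled by commutator expressions rather than relator $2$-cells).
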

    \begin{proof}
        Let $X$ be a $K(G,1)$ with a fixed basepoint $x_0$ and let $\gamma:S^1\rightarrow X$ be a loop based at $x_0$ representing $w$.
        Then $H_2\left(G,[w]\right)=H_2\left(X,\gamma\right)$ (see Definition \ref{def:relhom-gp}), and we construct a morphism
        \[
            \Phi:\left<\bar{w}\right>R\cap[F,F]\rightarrow H_2\left(X,\gamma;\mathbb{Z}\right)
        \]
        as follows.
        Let $\bar{g}\in\left<\bar{w}\right>R\cap\left[F,F\right]$.
        Since $\bar{g}\in[F,F]$, one can write
        \[
            \bar{g}=\left[\bar{a}_1,\bar{b}_1\right]\cdots\left[\bar{a}_k,\bar{b}_k\right],
        \]
        with $\bar{a}_i,\bar{b}_i\in F$.
        Set $a_i=p\left(\bar{a}_i\right)\in G$, $b_i=p\left(\bar{b}_i\right)\in G$ and $g=p\left(\bar{g}\right)\in G$.
        The assumption that $\bar{g}\in\left<\bar{w}\right>R$ in $F$ means that $g\in\left<w\right>$ in $G$, so one can write $g=w^n$ for some $n\in\mathbb{Z}$.
        Moreover, since $w$ has infinite order, the integer $n$ is uniquely determined by $\bar{g}$.
        Let $\Sigma_{k,1}$ be an oriented genus-$k$ surface with one boundary component.
        The surface $\Sigma_{k,1}$ has a cell structure with one $0$-cell $\bullet$, $(2k+1)$ $1$-cells with labels ${\alpha}_1,{\beta}_1,\dots,{\alpha}_k,{\beta_k},{\delta}$, and one $2$-cell glued along the word ${\delta}^{-1}\left[{\alpha}_1,{\beta}_1\right]\cdots\left[{\alpha}_k,{\beta}_k\right]$ --- see Figure \ref{fig:cell-struct-sigma}.
        \begin{figure}[htb]
            \centering
            \begin{subfigure}[c]{10em}
                \centering
                \begin{tikzpicture}[every node/.style={rectangle,draw=none,fill=none},scale=.65]
                    \foreach \i in {0,1,...,8}{\coordinate (c\i) at (\i*360/9:4.5em);}
                    
                    \draw [draw=none,pattern={Lines[angle=45,distance=7pt]},pattern color=gray] (c0) -- (c1) -- (c2) -- (c3) -- (c4) -- (c5) -- (c6) -- (c7) -- (c8) -- cycle;
                    
                    \foreach \i in {0,1,...,8}{\node [draw,circle,fill,inner sep=1pt] (x\i) at (c\i) {};}
                    
                    \draw [thick] (x0) -- (x8) coordinate [midway] (mid);
                    \draw [thick,->] (x0) -- (mid) node [right] {${\delta}$};
                    
                    \draw [red,thick] (x0) -- (x1) coordinate [midway] (mid);
                    \draw [red,thick,->] (x0) -- (mid) node [right] {${\alpha}_1$};
                    
                    \draw [red,thick] (x2) -- (x3) coordinate [midway] (mid);
                    \draw [red,thick,->] (x3) -- (mid) node [above] {${\alpha}_1$};
                    
                    \draw [red,thick] (x4) -- (x5) coordinate [midway] (mid);
                    \draw [red,thick,->] (x4) -- (mid) node [left] {${\alpha}_2$};
                    
                    \draw [red,thick] (x6) -- (x7) coordinate [midway] (mid);
                    \draw [red,thick,->] (x7) -- (mid) node [below] {${\alpha}_2$};
                    
                    \draw [ForestGreen,thick] (x1) -- (x2) coordinate [midway] (mid);
                    \draw [ForestGreen,thick,->] (x1) -- (mid) node [above right] {${\beta}_1$};
                    
                    \draw [ForestGreen,thick] (x3) -- (x4) coordinate [midway] (mid);
                    \draw [ForestGreen,thick,->] (x4) -- (mid) node [above left] {${\beta}_1$};
                    
                    \draw [ForestGreen,thick] (x5) -- (x6) coordinate [midway] (mid);
                    \draw [ForestGreen,thick,->] (x5) -- (mid) node [below left] {${\beta}_2$};
                    
                    \draw [ForestGreen,thick] (x7) -- (x8) coordinate [midway] (mid);
                    \draw [ForestGreen,thick,->] (x8) -- (mid) node [below right] {${\beta}_2$};
                    
                    \draw node at (0,0) {\Huge$\circlearrowleft$};
                \end{tikzpicture}
            \end{subfigure}
            \quad{\LARGE$=$}\quad
            \begin{subfigure}[c]{18em}
                \centering
                \begin{tikzpicture}[every node/.style={rectangle,draw=none,fill=none},scale=.65,rotate=180]
                    \draw [thick] (-2.5,0) to[bend right=45] (-.5,-2)
                        (-.5,-2) to (4.5,-2)
                        (4.5,-2) to[bend right=45] (6.5,0)
                        (6.5,0) to[bend right=45] (4.5,2)
                        (4.5,2) to (3.5,2)
                        (3.5,2) to [bend left=45] (3,2.5)
                        (1,2.5) to [bend left=45] (.5,2)
                        (.5,2) to (-.5,2)
                        (-.5,2) to [bend right=45] (-2.5,0);
                    
                    \draw [thick] (3,2.5) to[bend left=15] (1,2.5)
                        (3,2.5) to[bend right=15] (1,2.5) node [right] {${\delta}$};
                    \draw [thick,<-] (1.999,2.65) -- (2.001,2.65);
                    
                    \node [draw,circle,fill,inner sep=1.5pt] (x0) at (2,2.35) {};
                    
                    \draw [red,thick] (x0) to[bend right=25] (4,1.5)
                        (5.5,0) to [bend left=45] (4,-1.5)
                        (4,-1.5) to [bend left=45] (2.5,0)
                        (2.5,0) to[bend right=5] (x0);
                    \draw [red,thick,->] (4,1.5) to[bend left=45] (5.5,0)
                        node [left] {\small$\alpha_1$};
                        
                    \draw [red,thick] (x0) to[bend left=25] (0,1.5)
                        (0,1.5) to[bend right=45] (-1.5,0)
                        (0,-1.5) to [bend right=45] (1.5,0)
                        (1.5,0) to[bend left=5] (x0);
                    \draw [red,thick,->] (0,-1.5) to [bend left=45] (-1.5,0)
                        node [above left] {\small$\alpha_2$};
                        
                    \draw [ForestGreen,thick] (x0) -- (3.27,0) coordinate [midway] (midb11);
                    \draw [ForestGreen,thick,-<] (x0) to (midb11);
                    \draw [ForestGreen,thick] (2,-2) -- (x0) coordinate [midway] (midb12);
                    \draw [ForestGreen,dashed] (3.27,0) to[bend left=25] (2,-2);
                    \draw [ForestGreen,thick,-<] (2,-2) to (midb12)
                        node [above left,xshift=.35em] {\small$\beta_1$};
                        
                    \draw [ForestGreen,thick] (x0) -- (.73,0) coordinate [midway] (midb2);
                    \draw [ForestGreen,thick,-<] (x0) to (midb2);
                    \draw [ForestGreen,dashed] (.73,0) to[bend left=25] (0,-2);
                    \draw [ForestGreen,thick] (0,-2) to[bend left=45] (-2,0)
                        node [above right,xshift=-.35em] {\small$\beta_2$};
                    \draw [ForestGreen,thick,<-] (-2,0) to[bend left=40] (0,1.8);
                    \draw [ForestGreen,thick] (0,1.8) to[bend right=10] (x0);
                    
                    \draw [thick] (-0.73,0) to[bend left=37] (0,.5) (0,.5) to[bend left=37] (0.73,0)
                        (-0.8,.3) to[bend right=45] (0,-.5) (0,-.5) to[bend right=45] (0.8,.3);
                    
                    \draw [xshift=4cm,thick] (-0.73,0) to[bend left=37] (0,.5) (0,.5) to[bend left=37] (0.73,0)
                        (-0.8,.3) to[bend right=45] (0,-.5) (0,-.5) to[bend right=45] (0.8,.3);
                \end{tikzpicture}
            \end{subfigure}
            \caption{The cell structure on $\Sigma_{k,1}$ (with $k=2$).}
            \label{fig:cell-struct-sigma}
        \end{figure}
        
        First pick a degree-$n$ map $\partial f:\partial\Sigma_{k,1}\rightarrow S^1$.        
        Then define a map $f^{(1)}:\Sigma_{k,1}^{(1)}\rightarrow X$ on the $1$-skeleton of $\Sigma_{k,1}$ by sending $\bullet$ to the basepoint $x_0$ of $X$, each $1$-cell ${\alpha}_i$ to a loop representing $a_i$ in $\pi_1\left(X,x_0\right)=G$, each ${\beta}_i$ to a loop representing $b_i$, and define $f^{(1)}$ on $\delta$ by $f^{(1)}_{\left|\delta\right.}=\gamma\circ\partial f$; in particular, $\delta$ is mapped to a loop representing $g=w^n$.
        Since $g=\left[a_1,b_1\right]\cdots\left[a_k,b_k\right]$ in $G=\pi_1\left(X,x_0\right)$, the map $f^{(1)}:\Sigma_{k,1}^{(1)}\rightarrow X$ can be extended over the $2$-cell of $\Sigma_{k,1}$ to $f:\Sigma_{k,1}\rightarrow X$.
        Now the data of $f$ and $\partial f$ define an admissible surface
        \[
            f:\left(\Sigma_{k,1},\partial\Sigma_{k,1}\right)\rightarrow\left(X,\gamma\right).
        \]
        Note moreover that the homotopy types of $f$ and $\partial f$ are uniquely determined by the choice of an expression $\left[\bar{a}_1,\bar{b}_1\right]\cdots\left[\bar{a}_k,\bar{b}_k\right]$; this would fail if $w$ had torsion because in that case, the integer $n$ is not unique --- see Remark \ref{rk:rel-hopf}\ref{rk:rel-hopf-2} below.
        
        Now we define $\Phi(\bar{g})$ by
        \[
            \Phi(\bar{g})\coloneqq f_*\left[\Sigma_{k,1}\right]\in H_2\left(X,\gamma;\mathbb{Z}\right),
        \]
        where $\left[\Sigma_{k,1}\right]\in H_2\left(\Sigma_{k,1},\partial\Sigma_{k,1};\mathbb{Z}\right)$ is the integral fundamental class of $\Sigma_{k,1}$ (note that $\Sigma_{k,1}$ was chosen with an orientation).
        
        The construction of $\Phi\left(\bar{g}\right)$ explained above depends \emph{a priori} on the choice of an expression $\bar{g}=\left[\bar{a}_1,\bar{b}_1\right]\cdots\left[\bar{a}_k,\bar{b}_k\right]$, which might not be unique.
        For now, we see $\Phi$ as a map defined on the monoid $\Theta$ of all formal expressions $\left[\bar{a}_1,\bar{b}_1\right]\cdots\left[\bar{a}_k,\bar{b}_k\right]$ whose image in $F$ lie in $\left<\bar{w}\right>R$, and we'll show that this induces a well-defined map on $\left<\bar{w}\right>R\cap[F,F]$.
        
        \begin{claim}
            The map $\Phi:\Theta\rightarrow H_2\left(X,\gamma;\mathbb{Z}\right)$ is a monoid homomorphism.
        \end{claim}
        \begin{proof}[Proof of the claim]
            Consider two formal expressions $\theta=\left[\bar{a}_1,\bar{b}_1\right]\cdots\left[\bar{a}_k,\bar{b}_k\right]$ and $\theta'=\left[\bar{a}'_1,\bar{b}'_1\right]\cdots\left[\bar{a}'_\ell,\bar{b}'_\ell\right]$ in $\Theta$.
            As explained above, this gives rise to admissible surfaces $f:\left(\Sigma_{k,1},\partial\Sigma_{k,1}\right)\rightarrow(X,\gamma)$ and $f':\left(\Sigma_{\ell,1},\partial\Sigma_{\ell,1}\right)\rightarrow(X,\gamma)$, and we have $\Phi(\theta)=f_*\left[\Sigma_{k,1}\right]$ and $\Phi\left(\theta'\right)=f'_*\left[\Sigma_{\ell,1}\right]$.
            Consider the wedge sum
            \[
                \Sigma_\vee\coloneqq\Sigma_{k,1}\vee\Sigma_{\ell,1}.
            \]
            The maps $f$ and $f'$ naturally induce $f_\vee:\Sigma_\vee\rightarrow X$, and the fundamental classes of $\Sigma_{k,1}$ and $\Sigma_{\ell,1}$ sum to a class $\left[\Sigma_\vee\right]\in H_2\left(\Sigma_\vee,\partial\Sigma_\vee;\mathbb{Z}\right)$, where we define $\partial\Sigma_\vee=\partial\Sigma_{k,1}\vee\partial\Sigma_{\ell,1}\subseteq\Sigma_\vee$.
            Hence,
            \[
                \Phi\left(\theta_1\right)+\Phi\left(\theta_2\right)=\left(f_\vee\right)_*\left[\Sigma_\vee\right].
            \]
            Now there is a homotopy equivalence $\left(\Sigma_\vee,\partial\Sigma_\vee\right)\simeq\left(\Sigma_{k+\ell,1},\partial\Sigma_{k+\ell,1}\right)$, as illustrated in Figure \ref{fig:hopf-monoid-hom}.
            \begin{figure}[htb]
                \centering
                \begin{subfigure}[c]{15em}
                    \centering
                    \begin{tikzpicture}[every node/.style={rectangle,draw=none,fill=none},scale=.5]
                        \foreach \i in {0,1,...,8}{\coordinate (c\i) at ({2*cos(\i*360/9)},{2*sin(\i*360/9)});}
                        
                        \draw [draw=none,pattern={Lines[angle=45,distance=7pt]},pattern color=gray] (c0) -- (c1) -- (c2) -- (c3) -- (c4) -- (c5) -- (c6) -- (c7) -- (c8) -- cycle;
                        
                        \foreach \i in {0,1,...,8}{\node [draw,circle,fill,inner sep=1pt] (x\i) at (c\i) {};}
                        
                        \draw [thick] (x0) -- (x8) coordinate [midway] (mid);
                        \draw [thick,->] (x0) -- (mid) node [left] {$\delta'$};
                        
                        \draw [red,thick] (x0) -- (x1) coordinate [midway] (mid);
                        \draw [red,thick,->] (x0) -- (mid) node [left] {$\alpha'_1$};
                        
                        \draw [red,thick] (x2) -- (x3) coordinate [midway] (mid);
                        \draw [red,thick,->] (x3) -- (mid) node [above] {$\alpha'_1$};
                        
                        \draw [red,thick] (x4) -- (x5) coordinate [midway] (mid);
                        \draw [red,thick,->] (x4) -- (mid) node [left] {$\alpha'_2$};
                        
                        \draw [red,thick] (x6) -- (x7) coordinate [midway] (mid);
                        \draw [red,thick,->] (x7) -- (mid) node [below] {$\alpha'_2$};
                        
                        \draw [ForestGreen,thick] (x1) -- (x2) coordinate [midway] (mid);
                        \draw [ForestGreen,thick,->] (x1) -- (mid) node [above] {$\beta'_1$};
                        
                        \draw [ForestGreen,thick] (x3) -- (x4) coordinate [midway] (mid);
                        \draw [ForestGreen,thick,->] (x4) -- (mid) node [above left] {$\beta'_1$};
                        
                        \draw [ForestGreen,thick] (x5) -- (x6) coordinate [midway] (mid);
                        \draw [ForestGreen,thick,->] (x5) -- (mid) node [below left] {$\beta'_2$};
                        
                        \draw [ForestGreen,thick] (x7) -- (x8) coordinate [midway] (mid);
                        \draw [ForestGreen,thick,->] (x8) -- (mid) node [below] {$\beta'_2$};
                        
                        \draw node at (0,0) {\Huge$\circlearrowleft$};
                        
                        \foreach \i in {0,1,...,8}{\coordinate (c\i) at ({4+2*cos(180-\i*360/9)},{2*sin(180-\i*360/9)});}
                        
                        \draw [draw=none,pattern={Lines[angle=45,distance=7pt]},pattern color=gray] (c0) -- (c1) -- (c2) -- (c3) -- (c4) -- (c5) -- (c6) -- (c7) -- (c8) -- cycle;
                        
                        \foreach \i in {0,1,...,8}{\node [draw,circle,fill,inner sep=1pt] (x\i) at (c\i) {};}
                        
                        \draw [thick] (x0) -- (x8) coordinate [midway] (mid);
                        \draw [thick,->] (x8) -- (mid) node [right] {$\delta$};
                        
                        \draw [ForestGreen,thick] (x0) -- (x1) coordinate [midway] (mid);
                        \draw [ForestGreen,thick,->] (x0) -- (mid) node [right] {$\beta_2$};
                        
                        \draw [ForestGreen,thick] (x2) -- (x3) coordinate [midway] (mid);
                        \draw [ForestGreen,thick,->] (x3) -- (mid) node [above] {$\beta_2$};
                        
                        \draw [ForestGreen,thick] (x4) -- (x5) coordinate [midway] (mid);
                        \draw [ForestGreen,thick,->] (x4) -- (mid) node [right] {$\beta_1$};
                        
                        \draw [ForestGreen,thick] (x6) -- (x7) coordinate [midway] (mid);
                        \draw [ForestGreen,thick,->] (x7) -- (mid) node [below] {$\beta_1$};
                        
                        \draw [red,thick] (x1) -- (x2) coordinate [midway] (mid);
                        \draw [red,thick,->] (x1) -- (mid) node [above] {$\alpha_2$};
                        
                        \draw [red,thick] (x3) -- (x4) coordinate [midway] (mid);
                        \draw [red,thick,->] (x4) -- (mid) node [above right] {$\alpha_2$};
                        
                        \draw [red,thick] (x5) -- (x6) coordinate [midway] (mid);
                        \draw [red,thick,->] (x5) -- (mid) node [below right] {$\alpha_1$};
                        
                        \draw [red,thick] (x7) -- (x8) coordinate [midway] (mid);
                        \draw [red,thick,->] (x8) -- (mid) node [below] {$\alpha_1$};
                        
                        \draw node at (4,0) {\Huge$\circlearrowleft$};
                    \end{tikzpicture}
                \end{subfigure}
                \quad{\LARGE$\simeq$}\quad
                \begin{subfigure}[c]{15em}
                    \centering
                    \begin{tikzpicture}[every node/.style={rectangle,draw=none,fill=none},scale=.5]
                        \foreach \i in {0,1,...,9}{\coordinate (c\i) at ({2*cos((\i+.5)*360/10)},{2*sin((\i+.5)*360/10)});}
                        
                        \draw [draw=none,pattern={Lines[angle=45,distance=7pt]},pattern color=gray] (c0) -- (c1) -- (c2) -- (c3) -- (c4) -- (c5) -- (c6) -- (c7) -- (c8) -- (c9) -- cycle;
                        
                        \foreach \i in {0,1,...,9}{\node [draw,circle,fill,inner sep=1pt] (x\i) at (c\i) {};}
                        
                        \draw [thick] (x8) -- (x9) coordinate [midway] (mid);
                        \draw [thick,->] (x9) -- (mid) node [left] {$\delta'$};
                        
                        \draw [red,thick] (x0) -- (x1) coordinate [midway] (mid);
                        \draw [red,thick,->] (x0) -- (mid) node [left] {$\alpha'_1$};
                        
                        \draw [red,thick] (x2) -- (x3) coordinate [midway] (mid);
                        \draw [red,thick,->] (x3) -- (mid) node [above] {$\alpha'_1$};
                        
                        \draw [red,thick] (x4) -- (x5) coordinate [midway] (mid);
                        \draw [red,thick,->] (x4) -- (mid) node [left] {$\alpha'_2$};
                        
                        \draw [red,thick] (x6) -- (x7) coordinate [midway] (mid);
                        \draw [red,thick,->] (x7) -- (mid) node [below] {$\alpha'_2$};
                        
                        \draw [ForestGreen,thick] (x1) -- (x2) coordinate [midway] (mid);
                        \draw [ForestGreen,thick,->] (x1) -- (mid) node [above] {$\beta'_1$};
                        
                        \draw [ForestGreen,thick] (x3) -- (x4) coordinate [midway] (mid);
                        \draw [ForestGreen,thick,->] (x4) -- (mid) node [above left] {$\beta'_1$};
                        
                        \draw [ForestGreen,thick] (x5) -- (x6) coordinate [midway] (mid);
                        \draw [ForestGreen,thick,->] (x5) -- (mid) node [below left] {$\beta'_2$};
                        
                        \draw [ForestGreen,thick] (x7) -- (x8) coordinate [midway] (mid);
                        \draw [ForestGreen,thick,->] (x8) -- (mid) node [below] {$\beta'_2$};
                        
                        \draw node at (0,0) {\Huge$\circlearrowleft$};
                        \foreach \i in {0,1,...,9}{\coordinate (c\i) at ({4*cos(18)+2*cos(180-(\i+.5)*360/10)},{2*sin(180-(\i+.5)*360/10)});}
                        
                        \draw [draw=none,pattern={Lines[angle=45,distance=7pt]},pattern color=gray] (c0) -- (c1) -- (c2) -- (c3) -- (c4) -- (c5) -- (c6) -- (c7) -- (c8) -- (c9) -- cycle;
                        
                        \foreach \i in {0,1,...,9}{\node [draw,circle,fill,inner sep=1pt] (x\i) at (c\i) {};}
                        
                        \draw [thick] (x8) -- (x9) coordinate [midway] (mid);
                        \draw [thick,->] (x8) -- (mid) node [right] {$\delta$};
                        
                        \draw [ForestGreen,thick] (x0) -- (x1) coordinate [midway] (mid);
                        \draw [ForestGreen,thick,->] (x0) -- (mid) node [right] {$\beta_2$};
                        
                        \draw [ForestGreen,thick] (x2) -- (x3) coordinate [midway] (mid);
                        \draw [ForestGreen,thick,->] (x3) -- (mid) node [above] {$\beta_2$};
                        
                        \draw [ForestGreen,thick] (x4) -- (x5) coordinate [midway] (mid);
                        \draw [ForestGreen,thick,->] (x4) -- (mid) node [right] {$\beta_1$};
                        
                        \draw [ForestGreen,thick] (x6) -- (x7) coordinate [midway] (mid);
                        \draw [ForestGreen,thick,->] (x7) -- (mid) node [below] {$\beta_1$};
                        
                        \draw [red,thick] (x1) -- (x2) coordinate [midway] (mid);
                        \draw [red,thick,->] (x1) -- (mid) node [above] {$\alpha_2$};
                        
                        \draw [red,thick] (x3) -- (x4) coordinate [midway] (mid);
                        \draw [red,thick,->] (x4) -- (mid) node [above right] {$\alpha_2$};
                        
                        \draw [red,thick] (x5) -- (x6) coordinate [midway] (mid);
                        \draw [red,thick,->] (x5) -- (mid) node [below right] {$\alpha_1$};
                        
                        \draw [red,thick] (x7) -- (x8) coordinate [midway] (mid);
                        \draw [red,thick,->] (x8) -- (mid) node [below] {$\alpha_1$};
                        
                        \draw node at (4,0) {\Huge$\circlearrowleft$};
                    \end{tikzpicture}
                \end{subfigure}
                \caption{The homotopy equivalence $\Sigma_{k,1}\vee\Sigma_{\ell,1}\simeq\Sigma_{k+\ell,1}$ (here $k=\ell=2$).}
                \label{fig:hopf-monoid-hom}
            \end{figure}
            This yields an admissible surface $\left(\Sigma_{k+\ell,1},\partial\Sigma_{k+\ell,1}\right)\rightarrow(X,\gamma)$ representing the class $\Phi\left(\theta\right)+\Phi\left(\theta'\right)$ in $H_2\left(X,\gamma;\mathbb{Z}\right)$.
            But note that this admissible surface is exactly the one obtained when the above construction is applied to $\theta\theta'$.
            This proves that $\Phi\left(\theta\right)+\Phi\left(\theta'\right)=\Phi\left(\theta\theta'\right)$, so $\Phi$ is a monoid homomorphism.
        \end{proof}
        
        Using the claim, we now prove that $\Phi$ induces a well-defined map on $\left<\bar{w}\right>R\cap[F,F]$.
        Consider two formal expressions $\theta,\theta'\in\Theta$ defining the same element of $\left<\bar{w}\right>R\cap[F,F]$.
        Write $\theta=\left[\bar{a}_1,\bar{b}_1\right]\cdots\left[\bar{a}_k,\bar{b}_k\right]$, and consider its formal inverse $\theta^{-1}=\left[\bar{b}_k,\bar{a}_k\right]\cdots\left[\bar{b}_1,\bar{a}_1\right]\in\Theta$ (which, despite our choice of notation, is not an inverse of $\theta$ in the monoid $\Theta$!).
        Then the formal expression $\theta^{-1}\theta'$ represents the trivial element of $F$.
        This means that the above construction for the formal expression $\theta^{-1}\theta'$ can actually be performed when the $K(G,1)$ space $X$ is replaced with a $K(F,1)$ space $X_F$.
        In other words, the admissible surface $f:\left(\Sigma_{k,1},\partial\Sigma_{k,1}\right)\rightarrow\left(X,\gamma\right)$ associated to $\theta^{-1}\theta'$ factors through the map $X_F\rightarrow X$ induced by $F\rightarrow G$.
        Moreover, the image of $\partial\Sigma_{k,1}$ is nullhomotopic in $X_F$, from which it follows that
        \[
            f_*\left[\Sigma_{k,1}\right]\in H_2\left(X_F;\mathbb{Z}\right)\subseteq H_2\left(X_F,\bar{\gamma};\mathbb{Z}\right),
        \]
        where $\bar{\gamma}:S^1\rightarrow X_F$ is a representative of $\bar{w}\in F$.
        But $H_2\left(X_F;\mathbb{Z}\right)=H_2\left(F;\mathbb{Z}\right)=0$ since $F$ is a free group, so $\left[\Sigma_{k,1}\right]$ maps to zero in $H_2\left(X_F,\bar{\gamma};\mathbb{Z}\right)$, and hence also in $H_2\left(X,\gamma;\mathbb{Z}\right)$.
        Therefore, it follows from the claim that
        \[
            0=\Phi\left(\theta^{-1}\theta'\right)=\Phi\left(\theta^{-1}\right)+\Phi\left(\theta'\right),
        \]
        and it is clear from the construction that $\Phi\left(\theta^{-1}\right)=-\Phi\left(\theta\right)$, so $\Phi\left(\theta\right)=\Phi\left(\theta'\right)$ as wanted.
        This proves that $\Phi$ induces a well-defined map
        \[
            \Phi:\left<\bar{w}\right>R\cap[F,F]\rightarrow H_2\left(X,\gamma;\mathbb{Z}\right),
        \]
        which is a group homomorphism by the claim.
        
        The homomorphism $\Phi$ is surjective since every element of $H_2\left(X,\gamma;\mathbb{Z}\right)$ can be represented by a map from an orientable compact connected surface with one boundary component --- this follows from Proposition \ref{prop:ell1-gromnorm} and Lemma \ref{lmm:simple-incompr-adm-srf}.
        
        It remains to show the following:
        \begin{claim}
            $\Ker\Phi=\left[F,R\right]$.
        \end{claim}
        \begin{proof}[Proof of the claim]
            To prove that $\left[F,R\right]\subseteq\Ker\Phi$, it suffices to show that for every $\bar{g}\in F$ and $\bar{r}\in R$, we have $\left[\bar{g},\bar{r}\right]\in\Ker\Phi$.
            But $\Phi\left(\left[\bar{g},\bar{r}\right]\right)=f_*\left[\Sigma_{1,1}\right]$, where $\Sigma_{1,1}$ is a torus with one boundary component, with equator mapping to $\bar{g}$ and meridian mapping to $\bar{r}$.
            Since the image of $\bar{r}$ in $G$ is trivial, we may cut $\Sigma_{1,1}$ along the meridian and fill in the two resulting discs, obtaining a map $f_1:\left(D^2,\partial D^2\right)\rightarrow\left(X,\gamma\right)$.
            We can glue $f_1$ to itself with reversed orientation along $\partial D^2$ to obtain $f_2:S^2\rightarrow X$.
            But $X$ is assumed to be a $K(G,1)$ so it is aspherical, and $f_2$ is nullhomotopic.
            Therefore, $f_1$ is also nullhomotopic, and $f_*\left[\Sigma_{1,1}\right]=\left(f_1\right)_*\left[D^2\right]=0$.
            This proves that $\Phi\left(\left[\bar{g},\bar{r}\right]\right)=0$, so $\left[F,R\right]\subseteq\Ker\Phi$.
            
            Conversely, let $\bar{g}\in\Ker\Phi$.
            Let $f:\left(\Sigma,\partial\Sigma\right)\rightarrow(X,\gamma)$ be an admissible surface associated to an expression of $\bar{g}$ as a product of commutators by the above construction, with $\Sigma=\Sigma_{k,1}$.
            The assumption that $\bar{g}\in\Ker\Phi$ means that $f_*\left[\Sigma\right]=0$, so the map $f_*:H_2\left(\Sigma,\partial\Sigma;\mathbb{Z}\right)\rightarrow H_2\left(X,\gamma;\mathbb{Z}\right)$ is zero.
            Long exact sequences of pairs give a commutative diagram with exact rows (with omitted $\mathbb{Z}$-coefficients):
            \[
                \vcenter{\hbox{\begin{tikzpicture}[every node/.style={draw=none,fill=none,rectangle}]
                    \node (Z) at (-2.25,-1.25) {$0$};
                    \node (A) at (0,-1.25) {$H_2\left(X\right)$};
                    \node (Ap) at (0,0) {$0$};
                    \node (B) at (2.25,-1.25) {$H_2\left(X,\gamma\right)$};
                    \node (Bp) at (2.25,0) {$H_2\left(\Sigma,\partial\Sigma\right)$};
                    \node (C) at (4.5,-1.25) {$H_1\left(S_1\right)$};
                    \node (Cp) at (4.5,0) {$H_1\left(\partial\Sigma\right)$};
                    \node (D) at (6.75,-1.25) {$H_1(X)$};
                    \node (Dp) at (6.75,0) {$H_1(\Sigma)$};
                    \node (E) at (9,-1.25) {$\cdots$};
                    \node (Ep) at (9,0) {$\cdots$};
                    
                    \draw [->] (Z) -> (A);
                    \draw [->] (A) -> (B);
                    \draw [->] (Ap) -> (Bp);
                    \draw [<-] (B) -> (Bp) node [midway,right] {$0$};
                    \draw [->] (B) -> (C) node [midway,above] {$\partial$};
                    \draw [->] (Bp) -> (Cp) node [midway,above] {$\partial$};
                    \draw [<-] (C) -> (Cp) node [midway,right] {$f_*$};
                    \draw [->] (C) -> (D);
                    \draw [->] (Cp) -> (Dp);
                    \draw [<-] (D) -> (Dp) node [midway,right] {$f_*$};
                    \draw [->] (D) -> (E);
                    \draw [->] (Dp) -> (Ep);
                \end{tikzpicture}}}
            \]
            If $f_*:H_1\left(\partial\Sigma\right)\rightarrow H_1\left(S^1\right)$ were nonzero, then since $H_1\left(\partial\Sigma\right)\cong H_1\left(S^1\right)\cong\mathbb{Z}$, the map $f_*:H_1\left(\partial\Sigma\right)\rightarrow H_1\left(S^1\right)$ would in fact be injective.
            But $f_*\circ\partial=0$, so the map $\partial:H_2\left(\Sigma,\partial\Sigma\right)\rightarrow H_1\left(\partial\Sigma\right)$ would be zero, implying by exactness that $H_1\left(\partial\Sigma\right)=0$ since $H_1\left(\partial\Sigma\right)\rightarrow H_1\left(\Sigma\right)$ is zero.
            This is a contradiction, and therefore the map
            \[
                f_*:H_1\left(\partial\Sigma\right)\rightarrow H_1\left(S^1\right)
            \]
            is zero.
            Therefore, the restriction of $f$ to $\partial\Sigma$ is nullhomotopic, which implies in particular that the image of $\bar{g}$ in $G$ is trivial, i.e.\ $\bar{g}\in R\cap[F,F]$.
            Therefore, we are reduced to the setting of the classical Hopf formula (Theorem \ref{thm:hopf}), i.e.\ $\bar{g}\in R\cap[F,F]$ and $\Phi\left(\bar{g}\right)=1$ in $H_2\left(X;\mathbb{Z}\right)$.
            Since $\Phi$ coincides with the morphism giving the classical Hopf formula (see for instance \cite{putman}), it follows that $\bar{g}\in[F,R]$.
        \end{proof}
        
        We have constructed a surjective group homomorphism $\Phi:\left<\bar{w}\right>R\cap[F,F]\rightarrow H_2\left(X,\gamma;\mathbb{Z}\right)$ with $\Ker\Phi=\left[F,R\right]$, so $\Phi$ induces the desired isomorphism.
    \end{proof}
    
    \begin{rmk}
        \begin{enumerate}
            \item In the proof of Theorem \ref{thm:rel-hopf}, the assumption that $X$ is a $K(G,1)$ is essential.
            This is why --- contrary to the rest of this paper --- we state the theorem in terms of the relative homology of groups, rather than topological spaces.
            \item Theorem \ref{thm:rel-hopf} becomes false if $w$ has finite order $q$ in $G$.
            Indeed, the (absolute) Hopf formula (Theorem \ref{thm:hopf}) says that $H_2\left(G;\mathbb{Z}\right)$ is isomorphic to $R\cap\left[F,F\right]/\left[F,R\right]$, which has finite index in the right-hand side $\left<\bar{w}\right>R\cap\left[F,F\right]/\left[F,R\right]$ of Theorem \ref{thm:rel-hopf} when $w$ has finite order.
            But we know from Example \ref{ex:rel-hom-chn}\ref{ex:rel-hom-chn-4} that
            \[
                H_2\left(G,[w];\mathbb{Z}\right)\cong H_2\left(G;\mathbb{Z}\right)\oplus\mathbb{Z},
            \]
            so $H_2\left(G;\mathbb{Z}\right)$ must have infinite index in $H_2\left(G,[w];\mathbb{Z}\right)$!
            
            Let us explain briefly where the missing homology classes are.
            Pick $\gamma:S^1\rightarrow X$ a loop at $x_0\in X$ representing $w$.
            Then the homology class corresponding to the integer $n\in\mathbb{Z}$ in the $\mathbb{Z}$-summand of $H_2\left(G,\left[w\right];\mathbb{Z}\right)$ is represented by an admissible surface $f:\left(D^2,\partial D^2\right)\rightarrow\left(X,\gamma\right)$, where $D^2$ is the disc, $\partial f:\partial D^2\rightarrow S^1$ is a map of degree $nq$, and $f:D^2\rightarrow X$ is an extension of $\gamma\circ\partial f$ to the disc (which exists since $\gamma^{nq}$ is nullhomotopic).
            
            Note that the underlying maps $f:D^2\rightarrow X$ of the admissible surfaces representing these `missing classes' are nullhomotopic since the disc is contractible.
            This underlines the importance of defining an admissible surface as the data of both maps $f$ \emph{and} $\partial f$.\label{rk:rel-hopf-2}
            
            \item One can recover the classical Hopf formula (Theorem \ref{thm:hopf}) from our proof by observing that our isomorphism
            \[
                \left<\bar{w}\right>R\cap[F,F]/\left[F,R\right]\xrightarrow{\cong}H_2\left(G,\left[w\right];\mathbb{Z}\right)
            \]
            sends $R\cap[F,F]/\left[F,R\right]$ to $H_2\left(G;\mathbb{Z}\right)\subseteq H_2\left(G,\left[w\right];\mathbb{Z}\right)$.
            In other words, our construction maps an element $\bar{g}\in\left<\bar{w}\right>R\cap[F,F]$ to an absolute homology class if and only if $\bar{g}$ has trivial image in $G$.
        \end{enumerate}
        \label{rk:rel-hopf}
    \end{rmk}
    
    \subsection{Bavard duality through the lens of the Hopf formula}
    
    We next explain how to obtain an algebraic restatement of Theorem \ref{thm:bav-dual-rel-grom} using the relative Hopf formula (Theorem \ref{thm:rel-hopf}).
    
    Recall from \S{}\ref{subsec:hb-gp} the definition of the bounded cochain complex $C^*_b\left(G;\mathbb{R}\right)$.
    We denote by $Z^2_b\left(G;\mathbb{R}\right)$ the subspace of $C^2_b\left(G;\mathbb{R}\right)$ consisting of \emph{bounded $2$-cocycles} on $G$, i.e.\ bounded maps $\psi:G^2\rightarrow\mathbb{R}$ such that
    \[
        \psi\left(g_2,g_3\right)-\psi\left(g_1g_2,g_3\right)+\psi\left(g_1,g_2g_3\right)-\psi\left(g_1,g_2\right)=0
    \]
    for all $g_1,g_2,g_3\in G$.
    
    \begin{mthm}{D}[Bavard duality via the Hopf formula]\label{thm:bavard-hopf}
        Let $F$ be a (countable) free group, $R\trianglelefteq F$, and $G=F/R$.
        Let $w\in G$ and let $\bar{w}\in F$ be a preimage of $w$ under $F\xrightarrow{p}G$.
        
        Let $\alpha\in H_2\left(G,\left[w\right];\mathbb{Z}\right)$ and let
        \[
            \left[\bar{a}_1,\bar{b}_1\right]\cdots\left[\bar{a}_k,\bar{b}_k\right]\in\left<\bar{w}\right>R\cap[F,F],
        \]
        be a representative of $\Psi(\alpha)$, where $\Psi:H_2\left(G,\left[w\right];\mathbb{Z}\right)\xrightarrow{\cong}\left<\bar{w}\right>R\cap\left[F,F\right]/[F,R]$ is the isomorphism of Theorem \ref{thm:rel-hopf}.
        Set $a_i=p\left(\bar{a}_i\right)\in G$ and $b_i=p\left(\bar{b}_i\right)\in G$.
        
        Then
        \begin{multline*}
            \gromnorm{\iota\alpha}=\sup\left\{\frac{1}{\inftynorm{\psi}}\left(\psi\left(a_1,b_1\right)+\psi\left(a_1b_1,a_1^{-1}\right)+\psi\left(a_1b_1a_1^{-1},b_1^{-1}\right)\right.\right.\\
            +\psi\left(\left[a_1,b_1\right],a_2\right)+\psi\left(\left[a_1,b_1\right]a_2,b_2\right)+\psi\left(\left[a_1,b_1\right]a_2b_2,a_2^{-1}\right)+\cdots\\
            \left.\vphantom{\frac{1}{\inftynorm{\psi}}}\left.+\psi\left(\left[a_1,b_1\right]\cdots\left[a_{k-1},b_{k-1}\right]a_kb_ka_{k}^{-1},b_k^{-1}\right)\right)\st{}\psi\in Z^2_b\left(G;\mathbb{R}\right)\smallsetminus\{0\}\right\},
        \end{multline*}
        where $\iota:H_2\left(G,\left[w\right];\mathbb{Z}\right)\rightarrow H_2\left(G,\left[w\right];\mathbb{Q}\right)$ is the change-of-coefficients map.
    \end{mthm}
    \begin{proof}
        Let $X$ be a $K(G,1)$ and let $\gamma:S^1\rightarrow X$ represent $w$.
        Recall that the isomorphism $\Psi:H_2\left(G,\left[w\right];\mathbb{Z}\right)\xrightarrow{\cong}\left<\bar{w}\right>R\cap\left[F,F\right]/[F,R]$ was constructed in the proof of Theorem \ref{thm:rel-hopf} by starting with a product of $k$ commutators in $\left<\bar{w}\right>R\cap\left[F,F\right]$, labelling the edges in a cellular decomposition of the compact surface $\Sigma_{k,1}$ with those commutators, mapping $\Sigma_{k,1}$ to $X$ and considering the image of the fundamental class $\left[\Sigma_{k,1}\right]$ in $H_2\left(X,\gamma\right)=H_2\left(G,\left[w\right]\right)$.
        We will now be a bit more specific about the choice of the map $\Sigma_{k,1}\rightarrow X$.
        We start by picking singular simplices $\sigma_{g_1,\dots,g_n}:\Delta^n\rightarrow X$ for each $n$-uple $\left(g_1,\dots,g_n\right)\in G^n$ as in \S{}\ref{subsec:hb-gp} (see in particular Figure \ref{fig:constr-che}), so that the map $\left(g_1,\dots,g_n\right)\mapsto\sigma_{g_1,\dots,g_n}$ induces a chain homotopy equivalence $C_*\left(G;\mathbb{R}\right)\xrightarrow{\sim}C_*^\textnormal{sg}\left(X;\mathbb{R}\right)$.
        Take a one-vertex triangulation of $\Sigma_{k,1}$ as in Figure \ref{fig:trig-sigma}.
        \begin{figure}[htb]
            \centering
            \begin{tikzpicture}[every node/.style={rectangle,draw=none,fill=none},scale=.85]
                \foreach \i in {0,1,...,8}{\coordinate (c\i) at (\i*360/9:4.5em);}
                
                \draw [draw=none,pattern={Lines[angle=45,distance=7pt]},pattern color=gray] (c0) -- (c1) -- (c2) -- (c3) -- (c4) -- (c5) -- (c6) -- (c7) -- (c8) -- cycle;
                
                \foreach \i in {0,1,...,8}{\node [draw,circle,fill,inner sep=1pt] (x\i) at (c\i) {};}
                
                \draw [thick] (x0) -- (x8) coordinate [midway] (mid);
                \draw [thick,->] (x0) -- (mid) node [right] {${\delta}$};
                
                \draw [red,thick] (x0) -- (x1) coordinate [midway] (mid);
                \draw [red,thick,->] (x0) -- (mid) node [right] {${\alpha}_1$};
                
                \draw [red,thick] (x2) -- (x3) coordinate [midway] (mid);
                \draw [red,thick,->] (x3) -- (mid) node [above] {${\alpha}_1$};
                
                \draw [red,thick] (x4) -- (x5) coordinate [midway] (mid);
                \draw [red,thick,->] (x4) -- (mid) node [left] {${\alpha}_2$};
                
                \draw [red,thick] (x6) -- (x7) coordinate [midway] (mid);
                \draw [red,thick,->] (x7) -- (mid) node [below] {${\alpha}_2$};
                
                \draw [ForestGreen,thick] (x1) -- (x2) coordinate [midway] (mid);
                \draw [ForestGreen,thick,->] (x1) -- (mid) node [above right] {${\beta}_1$};
                
                \draw [ForestGreen,thick] (x3) -- (x4) coordinate [midway] (mid);
                \draw [ForestGreen,thick,->] (x4) -- (mid) node [above left] {${\beta}_1$};
                
                \draw [ForestGreen,thick] (x5) -- (x6) coordinate [midway] (mid);
                \draw [ForestGreen,thick,->] (x5) -- (mid) node [below left] {${\beta}_2$};
                
                \draw [ForestGreen,thick] (x7) -- (x8) coordinate [midway] (mid);
                \draw [ForestGreen,thick,->] (x8) -- (mid) node [below right] {${\beta}_2$};
                
                \foreach \i in {2,...,7}{\draw [thick] (x0) -- (x\i);}
            \end{tikzpicture}
            \caption{One-vertex triangulation of $\Sigma_{k,1}$.}
            \label{fig:trig-sigma}
        \end{figure}
        We can construct the map $f:\Sigma_{k,1}\rightarrow X$ explicitly by sending each triangle of $\Sigma_{k,1}$ to the correct singular $2$-simplex among the $\sigma_{g_1,g_2}$'s.
        We obtain in particular that
        \begin{multline}\label{eq:alpha-sing-spxs}
            \alpha=f_*\left[\Sigma_{k,1}\right]=\left[\sigma_{a_1,b_1}+\sigma_{a_1b_1,a_1^{-1}}+\sigma_{a_1b_1a_1^{-1},b_1^{-1}}+\sigma_{\left[a_1,b_1\right],a_2}\right.\\
            \left.+\sigma_{\left[a_1,b_1\right]a_2,b_2}+\cdots+\sigma_{\left[a_1,b_1\right]\cdots\left[a_{k-1},b_{k-1}\right]a_kb_ka_k^{-1},b_k^{-1}}\right]\in H_2\left(X,\gamma;\mathbb{Z}\right).
        \end{multline}
        
        Now Bavard duality for $H_2\left(X,\gamma\right)$ (Theorem \ref{thm:bav-dual-rel-grom}) gives
        \[
            \gromnorm{\iota\alpha}=\sup\left\{\frac{\left<u,\alpha\right>}{\inftynorm{u}}\st{}u\in H^2_b\left(X;\mathbb{R}\right)\smallsetminus\{0\}\right\}.
        \]
        
        Pick some $u\in H^2_b\left(X;\mathbb{R}\right)\cong H^2_b\left(G;\mathbb{R}\right)$ and let $\psi\in Z^2_b\left(G;\mathbb{R}\right)$ be a $2$-cocycle such that $u=\left[\psi\right]$.
        The chain homotopy equivalence $\left(g_1,\dots,g_n\right)\mapsto\sigma_{g_1,\dots,g_n}$ tells one how to evaluate $\psi$ on singular (relative) $2$-cycles spanned by the $\sigma_{g_1,g_2}$'s in $C_2\left(X;\mathbb{R}\right)$: there is an equality
        \[
            \left<\psi,\sigma_{g_1,g_2}\right>=\psi\left(g_1,g_2\right).
        \]
        Therefore (\ref{eq:alpha-sing-spxs}) implies that the Kronecker product $\left<u,\alpha\right>$ is given by
        \begin{multline*}
            \left<u,\alpha\right>=\psi\left(a_1,b_1\right)+\psi\left(a_1b_1,a_1^{-1}\right)+\psi\left(a_1b_1a_1^{-1},b_1^{-1}\right)+\psi\left(\left[a_1,b_1\right],a_2\right)\\
            +\psi\left(\left[a_1,b_1\right]a_2,b_2\right)+\cdots+\psi\left(\left[a_1,b_1\right]\cdots\left[a_{k-1},b_{k-1}\right]a_kb_ka_{k}^{-1},b_k^{-1}\right).
        \end{multline*}
        The result follows, remembering that $\inftynorm{u}=\inf\left\{\inftynorm{\psi}\st{}[\psi]=u\right\}$.
    \end{proof}
    
    \begin{rmk}
        A quasimorphism $\phi\in Q(G)$ defines a bounded $2$-cocycle $d^2\phi\in Z^2_b(G;\mathbb{R})$ as explained in Remark \ref{rk:qm-h2b}.
        Given an integral class $\alpha\in H_2\left(G,\left[w\right];\mathbb{Z}\right)$ with $\partial\alpha=n\left[S^1\right]$, one can use the formula of Theorem \ref{thm:bavard-hopf} to obtain
        \[
            \left<\left[d^2\phi\right],\alpha\right>=n\cdot\phi\left(w\right).
        \]
        Using the lower bound on $\gromnorm{\cdot}$ given by Theorem \ref{thm:bavard-hopf} together with the connection between $\scl$ and $\gromnorm{\cdot}$ (Proposition \ref{prop:scl-gromnorm}), it follows that
        \[
            \scl\left([w]\right)\geq\frac{1}{2}\sup_{\phi\in Q(G)}\frac{\phi(w)}{2\inftynorm{d^2\phi}}.
        \]
        On the other hand, classical Bavard duality (Theorem \ref{thm:bavard}) says that
        \[
            \scl\left([w]\right)=\sup_{\phi\in Q(G)}\frac{\phi(w)}{2\inftynorm{d^2\phi}}.
        \]
        Feeding quasimorphisms into Theorem \ref{thm:bavard-hopf} has yielded a non-optimal lower bound on $\scl$.
        The reason for this is the difference between a cocycle $\psi\in Z^2_b\left(G;\mathbb{R}\right)$ and its class $[\psi]\in H^2_b\left(G;\mathbb{R}\right)$: given $\phi\in Q(G)$, there are inequalities \cite{cal-scl}*{Lemma 2.58}
        \[
            \frac{1}{2}\inftynorm{d^2\phi}\leq\inftynorm{\left[d^2\phi\right]}\leq\inftynorm{d^2\phi},
        \]
        and $\inftynorm{\left[d^2\phi\right]}$ might not be realised by the coboundary of a quasimorphism.
    \end{rmk}

\section{The bounded Euler class}\label{sec:euler}

    Calegari \cite{cal-fnb} exhibited a connection between the rotation quasimorphism, area, and stable commutator length in fundamental groups of compact hyperbolic surfaces with non-empty boundary.
    We explain how this generalises to a statement about the relative Gromov seminorm in possibly closed hyperbolic surface groups.
    
    \subsection{Equivariant (bounded) cohomology}
    
    To define the bounded Euler class, we will use the language of equivariant cohomology.
    
    Given a set $X$ with an action of a group $G$, a \emph{degree-$n$ homogeneous $G$-cochain} (with real coefficients) is a map $\psi:X^{n+1}\rightarrow\mathbb{R}$ which is invariant under the diagonal action of $G$ on $X^{n+1}$, in the sense that
    \[
        \psi\left(x_0,\dots,x_n\right)=\psi\left(gx_0,\dots,gx_n\right)
    \]
    for all $x_0,\dots,x_n\in X$ and $g\in G$.
    We denote by $C^n\left(G\curvearrowright X;\mathbb{R}\right)$ the $\mathbb{R}$-vector space of such cochains; they form a cochain complex $C^*\left(G\curvearrowright X;\mathbb{R}\right)$ with coboundary maps $d^n:C^{n-1}\left(G\curvearrowright X;\mathbb{R}\right)\rightarrow C^{n}\left(G\curvearrowright X;\mathbb{R}\right)$ given by
    \[
        d^n\psi\left(x_0,\dots,x_n\right)\coloneqq\sum_{i=0}^n(-1)^i\psi\left(x_0,\dots,\hat{x}_i,\dots,x_n\right),
    \]
    where the hat denotes omission.
    The cohomology of $C^*\left(G\curvearrowright X;\mathbb{R}\right)$ is denoted by $H^*\left(G\curvearrowright X;\mathbb{R}\right)$.
    
    \begin{rmk}
        \begin{enumerate}
            \item If $G$ acts on itself by (left) multiplication, then there is an isomorphism
            \[
                H^*\left(G\curvearrowright G;\mathbb{R}\right)\cong H^*\left(G;\mathbb{R}\right),
            \]
            which is induced by the map $\theta:C^*\left(G;\mathbb{R}\right)\rightarrow C^*\left(G\curvearrowright G;\mathbb{R}\right)$ given by
            \[
                \left(\theta\psi\right)\left(g_0,\dots,g_n\right)\coloneqq\psi\left(g_0^{-1}g_1,g_1^{-1}g_2,\dots,g_{n-1}^{-1}g_n\right)
            \]
            for $\psi\in C^n\left(G;\mathbb{R}\right)$ (see \S{}\ref{subsec:hb-gp} for our definition of $C^*\left(G;\mathbb{R}\right)$).\label{rk:equiv-cohom-1}
            
            \item Given a choice of basepoint $x$ in a $G$-set $X$, there is a morphism\label{rk:equiv-cohom-2}
            \[
                H^*\left(G\curvearrowright X;\mathbb{R}\right)\rightarrow H^*\left(G\curvearrowright G;\mathbb{R}\right)
            \]
            induced by the map $\varpi_{x}:C^*\left(G\curvearrowright X;\mathbb{R}\right)\rightarrow C^*\left(G\curvearrowright G;\mathbb{R}\right)$ given by
            \[
                \left(\varpi_{x}\psi\right)\left(g_0,\dots,g_n\right)=\psi\left(g_0x,\dots,g_nx\right)
            \]
            for $\psi\in C^n\left(G\curvearrowright X;\mathbb{R}\right)$.
            In fact, this morphism is independent of the choice of $x$.
            Combining this with \ref{rk:equiv-cohom-1}, we obtain a morphism
            \[
                \eta:H^*\left(G\curvearrowright X;\mathbb{R}\right)\rightarrow H^*\left(G;\mathbb{R}\right).
            \]
        \end{enumerate}
        \label{rk:equiv-cohom}
    \end{rmk}
    
    Similar to the definition of bounded cohomology for spaces and groups (see \S{}\ref{subsec:hb-top} and \S{}\ref{subsec:hb-gp}), there is a bounded version of equivariant cohomology: the complex $C^*_b\left(G\curvearrowright X;\mathbb{R}\right)$ of bounded homogeneous $G$-cochains is the subcomplex of $C^*\left(G\curvearrowright X;\mathbb{R}\right)$ consisting of \emph{bounded} $G$-equivariant maps $X^{n+1}\rightarrow\mathbb{R}$.
    The corresponding cohomology is denoted by $H^*_b\left(G\curvearrowright X;\mathbb{R}\right)$, and there is a morphism
    \[
        H^*_b\left(G\curvearrowright X;\mathbb{R}\right)\rightarrow H^*_b\left(G;\mathbb{R}\right)
    \]
    as in Remark \ref{rk:equiv-cohom}\ref{rk:equiv-cohom-2}.
    We refer the reader to \cite{bfh}*{\S{}3.1} for more details on equivariant cohomology.
    
    \subsection{Bounded Euler class of a circle action}\label{subsec:eub}
    
    A choice of hyperbolic structure on a connected surface $S$ defines an action of $\pi_1S$ on the hyperbolic plane $\mathbb{H}^2$.
    This induces an action on the boundary of $\mathbb{H}^2$, which is homeomorphic to the circle $S^1$.
    In general, the dynamics of an action of a group $G$ on the circle is encoded by the bounded Euler class, which is a class in $H^2_b\left(G\right)$ that was introduced by Ghys \cite{ghys:mexico} as a generalisation of Poincaré's rotation number \cites{poincare1,poincare2}.
    
    The bounded Euler class has several equivalent definitions \cite{bfh}, and for our purpose, it will be helpful to define it from the point of view of the orientation cocycle.
    
    Consider the action of the group $\Homeo^+\left(S^1\right)$ of orientation-preserving homeomorphisms of the circle on $S^1$.    
    The \emph{orientation cocycle} is the bounded $2$-cochain $\Or\in C_b^2\left(\Homeo^+\left(S^1\right)\curvearrowright S^1;\mathbb{R}\right)$ given by
    \[
        \Or\left(x,y,z\right)=\begin{cases}+1\quad&\textrm{if the triple $(x,y,z)\in\left(S^1\right)^3$ is positively oriented}\\-1\quad&\textrm{if the triple $(x,y,z)\in\left(S^1\right)^3$ is negatively oriented}\\0\quad&\textrm{if the triple $(x,y,z)\in\left(S^1\right)^3$ is degenerate}\end{cases}.
    \]
    This turns out to be a cocycle, so it defines $\left[\Or\right]\in H^2_b\left(\Homeo^+\left(S^1\right)\curvearrowright S^1;\mathbb{R}\right)$.
    \begin{defi}
        The \emph{universal real bounded Euler class for circle actions} is
        \[
            \eu_b^\mathbb{R}=-\frac{1}{2}\eta\left[\Or\right]\in H^2_b\left(\Homeo^+\left(S^1\right);\mathbb{R}\right),
        \]
        where $\eta:H^2_b\left(\Homeo^+\left(S^1\right)\curvearrowright S^1\right)\rightarrow H^2_b\left(\Homeo^+\left(S^1\right)\right)$ is the morphism described in Remark \ref{rk:equiv-cohom}\ref{rk:equiv-cohom-2}.
    \end{defi}
    
    Given an action $\rho:G\rightarrow\Homeo^+\left(S^1\right)$ of a group on the circle, the (real) \emph{bounded Euler class} of the action is
    \[
        \eu_b^\mathbb{R}(\rho)=\rho^*\eu_b^\mathbb{R}\in H^2_b\left(G;\mathbb{R}\right).
    \]
    This measures how far $\rho$ is from being a rotation action on $S^1$ \cite{frigerio}*{Corollary 10.27}.
    By definition, $\inftynorm{\eu_b^\mathbb{R}(\rho)}\leq\inftynorm{\eu_b^\mathbb{R}}\leq\frac{1}{2}\inftynorm{\Or}=\frac{1}{2}$.
    See \cites{ghys:em,bfh} for more details on the bounded Euler class.
    
    \subsection{Area of a relative \texorpdfstring{$2$}{2}-class}
    
    In \cite{cal-fnb}, Calegari defines a notion of area for a homologically trivial $\gamma:\coprod S^1\rightarrow S$ in an oriented, connected, hyperbolic surface $S$ with non-empty boundary.
    In his definition, it is crucial that $S$ has non-empty boundary because then $H_2(S)=0$, so the map $\partial:H_2\left(S,\gamma\right)\rightarrow H_1\left(\coprod S^1\right)$ is injective and there is a unique class $\alpha\in\partial^{-1}\left(\left[\coprod S^1\right]\right)$.
    We now explain how to generalise Calegari's notion of area to the case where $S$ is closed by defining the area of a class in $H_2(S,\gamma;\mathbb{R})$.
    
    Let $S$ be a hyperbolic surface with (possibly empty) geodesic boundary.
    Let $\gamma:\coprod S^1\rightarrow S$ be a collection of geodesic loops in $S$, and let $\alpha\in H_2\left(S,\gamma;\mathbb{R}\right)$.
    By definition, $H_2(S,\gamma;\mathbb{R})=H_2\left(S_\gamma,\coprod S^1;\mathbb{R}\right)$.
    The mapping cylinder $S_\gamma$ has no geometric structure allowing us to measure areas, but there is a map of pairs $\left(S_\gamma,\coprod S^1\right)\rightarrow\left(S,\gamma\left(\coprod S^1\right)\right)$ defined by collapsing the cylinder.
    This induces a morphism
    \[
        \textstyle H_2\left(S,\gamma;\mathbb{R}\right)\rightarrow H_2\left(S,\gamma\left(\coprod S^1\right);\mathbb{R}\right),
    \]
    and we'll measure the area of $\alpha$ in the image.
    We pick a cell structure on $S$ such that
    \begin{itemize}
        \item The $0$-skeleton of $S$ contains all multiple points of $\gamma$ (i.e. all points $p\in S$ for which there are $s\neq t\in\coprod S^1$ such that $p=\gamma(s)=\gamma(t)$),
        \item The $1$-skeleton of $S$ contains $\gamma\left(\coprod S^1\right)$, and
        \item Each $2$-cell is positively oriented (for the orientation inherited by $S$).
    \end{itemize}
    One can choose a cellular relative $2$-cycle $c$ representing the image of $\alpha$ in the homology $H_2\left(S,\gamma\left(\coprod S^1\right);\mathbb{R}\right)$, and $c$ is in fact unique because both $C_3^\textnormal{cell}\left(S\right)$ and $C_2^\textnormal{cell}\left(\gamma\left(\coprod S^1\right)\right)$ are zero.
    
    \begin{defi}
        Let $\gamma:\coprod S^1\rightarrow S$ be a collection of geodesic loops in a hyperbolic surface $S$.
        Given $\alpha\in H_2\left(S,\gamma;\mathbb{R}\right)$, the \emph{area} of $\alpha$ is defined by
        \[
            \area(\alpha)=\sum_\sigma\lambda_\sigma\area\left(\sigma\right),
        \]
        where $\sum_\sigma\lambda_\sigma\sigma\in Z_2^\textnormal{cell}\left(S,\gamma\left(\coprod S^1\right);\mathbb{R}\right)$ (with $\lambda_\sigma\in\mathbb{R}$ for each $2$-cell $\sigma$) is the unique cellular relative $2$-cycle representing the image of $\alpha$ in $H_2\left(S,\gamma\left(\coprod S^1\right);\mathbb{R}\right)$.
    \end{defi}
    
    \begin{rmk}\label{rk:isom-adm-surf-area}
        Let $f:\left(\Sigma,\partial\Sigma\right)\rightarrow\left(S,\gamma\right)$ be an admissible surface.
        Assume that $\Sigma$ is equipped with a hyperbolic structure with respect to which the map $f:\Sigma\rightarrow S$ is a local isometric embedding.
        Then there is an equality
        \[
            \area\left(f_*[\Sigma]\right)=\area(\Sigma),
        \]
        where $f_*[\Sigma]$ is seen as a class in $H_2\left(S,\gamma;\mathbb{R}\right)$.
    \end{rmk}

    \subsection{Pleated surfaces}
    
    In order to obtain good estimates on the Gromov seminorm for a hyperbolic surface $S$, it will be helpful to measure it with special admissible surfaces, called pleated surfaces.
    The heuristics behind pleated surfaces is the following: if $\Sigma$ is an orientable compact connected surface, then its simplicial volume is given by $\gromnorm{[\Sigma]}=-2\chi^-(\Sigma)$; however, there is no triangulation of $\Sigma$ realising this equality.
    Instead, the simplicial volume is realised by an \emph{ideal triangulation}.
    The idea is therefore to endow admissible surfaces $\Sigma$ with ideal triangulations that are compatible with the hyperbolic structure on $S$.
    
    Pleated surfaces, which were introduced by Thurston \cite{thurston}*{\S{}8.8}, will achieve this.
    
    A \emph{geodesic lamination} $\Lambda$ in a hyperbolic surface $\Sigma$ is a closed subset of $\Sigma$ which decomposes as a disjoint union of complete embedded geodesics.
    Each such geodesic is called a \emph{leaf} of $\Lambda$.
    
    \begin{defi}
        Let $M$ be a hyperbolic manifold.
        A \emph{pleated surface} in $M$ is a map $f:\Sigma\rightarrow M$, where $\Sigma$ is a finite-area hyperbolic surface, such that
        \begin{enumerate}
            \item $f$ sends each arc in $\Sigma$ to an arc of the same length in $M$,
            \item There is a geodesic lamination $\Lambda\subseteq\Sigma$ such that $f$ sends each leaf of $\Lambda$ to a geodesic of $M$, and $f$ is totally geodesic (i.e.\ sends every geodesic to a geodesic) on $\Sigma\smallsetminus\Lambda$, and
            \item If $\Sigma$ is non-compact, then $f$ sends each small neighbourhood of each cusp of $\Sigma$ to a small neighbourhood of a cusp of $M$.
        \end{enumerate}
        The geodesic lamination $\Lambda$ is called a \emph{pleating locus} for $f$.
    \end{defi}
    For a more detailed introduction to pleated surfaces in hyperbolic manifolds, we refer the reader to \cites{futer-schleimer,bonahon,notesonnotes}.
    
    We now show, following Calegari \cite{cal-scl}*{\S{}3.1.3}, how to obtain pleated admissible surfaces.
    The fundamental tool for this is Thurston's \emph{spinning construction}:
    \begin{lmm}[Thurston \cite{thurston}*{\S{}8.10}]\label{lmm:spinning}
        Let $P$ be a pair of pants (i.e.\ a compact hyperbolic surface of genus $0$ with three boundary components) and let $M$ be a compact hyperbolic surface or a closed hyperbolic manifold.
        Given a map $f:P\rightarrow M$, either
        \begin{enumerate}
            \item The image of $\pi_1P$ under $f_*$ is a cyclic subgroup of $\pi_1M$, or
            \item The map $f$ can be homotoped to a pleated surface.
        \end{enumerate}
    \end{lmm}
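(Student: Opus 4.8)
The plan is to reproduce Thurston's spinning/straightening construction \cite{thurston}*{\S{}8.8--8.10} (see also Calegari \cite{cal-scl}*{\S{}3.1.3}). Write $\pi_1P=\langle a,b\rangle\cong F_2$, where $a$, $b$, and $c\coloneqq(ab)^{-1}$ represent the three boundary loops of $P$, and set $A\coloneqq f_*(a)$, $B\coloneqq f_*(b)$, $C\coloneqq f_*(c)=(AB)^{-1}$ in $\Gamma\coloneqq\pi_1M$. If $\langle A,B\rangle\leq\Gamma$ is cyclic then conclusion (i) holds, so I assume henceforth that it is not cyclic; this forces $A$, $B$, $C$ all to be nontrivial (e.g.\ $C=1$ would give $B=A^{-1}$ and $\langle A,B\rangle$ cyclic). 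Since $M$ is closed or compact with geodesic boundary, $\Gamma$ is a discrete torsion-free group of isometries of $\mathbb{H}^m$ (with $m=2$ when $M$ is a surface) without parabolics, so each of $A$, $B$, $C$ is loxodromic with an axis and two well-defined fixed points on $\partial\mathbb{H}^m$.

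To build the pleated surface I would first equip $P$ with the hyperbolic structure with geodesic boundary whose three boundary lengths agree with the lengths of the closed geodesics of $M$ freely homotopic to the boundary loops of $f$ — this is the choice that makes the eventual map length-preserving on $\partial P$. A compact hyperbolic pair of pants carries a \emph{spun ideal triangulation} $\mathcal{T}$: two ideal triangles glued along three complete geodesic arcs, each spiralling onto the two boundary geodesics it runs between, with the spiralling direction on each boundary component fixed once and for all. The union $\Lambda$ of $\partial P$ with these three spiralling arcs is then a geodesic lamination whose two complementary regions are open ideal triangles. Lifting $\mathcal{T}$ to the developing region $D\subseteq\mathbb{H}^2$ of $P$, the set $V$ of ideal vertices of $\widetilde{\mathcal{T}}$ is $\pi_1P$-invariant, and each $v\in V$ is a prescribed fixed point of a conjugate $gwg^{-1}$ of a unique boundary element $w\in\{a,b,c\}$ (with the chosen orientations and spiralling directions).

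Next I would define $\Phi\colon V\to\partial\mathbb{H}^m$ equivariantly by sending such a $v$ to the corresponding fixed point of $f_*(g)\,f_*(w)\,f_*(g)^{-1}$; this is well-defined because $a,b,c$ are pairwise non-conjugate in $F_2$, centralisers in $F_2$ are cyclic, and conjugate loxodromics have correspondingly conjugate fixed points. One then checks, using discreteness of $\Gamma$ (two loxodromics in a torsion-free discrete parabolic-free group that share a fixed point share an axis, hence generate an infinite cyclic group), that the three $\Phi$-values at the ideal vertices of any triangle of $\widetilde{\mathcal{T}}$ are distinct — otherwise $\langle A,B\rangle$ would be cyclic, contrary to hypothesis. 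Each ideal triangle of $\widetilde{\mathcal{T}}$ therefore has a unique straightening: the map to $\mathbb{H}^m$ that is totally geodesic on the interior and takes the ideal vertices to their $\Phi$-images, which is in fact an isometric embedding onto the ideal triangle they span. These straightenings agree on shared edges and assemble into an $f_*$-equivariant map $\widetilde{f}\colon D\to\mathbb{H}^m$, descending to $\bar f\colon P\to M$.

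It remains to check that $\bar f$ is a pleated surface homotopic to $f$. By construction $\bar f$ is totally geodesic on the two components of $P\smallsetminus\Lambda$ and sends each leaf of $\Lambda$ to a geodesic of $M$ (the three boundary leaves to the closed geodesics representing $A$, $B$, $C$, and the three spiralling leaves to geodesics spiralling between them); since the complementary triangles are embedded isometrically and $\Lambda$ has empty interior, $\bar f$ sends each geodesic arc of $P$ to a path of the same length in $M$ — this is the length-preserving condition — while the cusp condition in the definition is vacuous because $P$ is compact. Finally $\bar f$ realises $f_*$ on fundamental groups, so since $P$ and $M$ are aspherical, $\bar f\simeq f$, and conclusion (ii) holds. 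I expect the bulk of the work to be in the spinning construction itself: exhibiting the spun ideal triangulation on a pair of pants with geodesic boundary, keeping the spiralling directions coherent, and verifying the length-preserving condition — in particular that the hyperbolic structure on $P$ must be, and can be, chosen with the matching boundary lengths — with the non-degeneracy of the straightened triangles being the other point requiring care, especially in the stated generality of a closed hyperbolic manifold rather than just a surface.
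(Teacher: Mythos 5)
Your overall route is the same as the paper's: spin a triangulation of the pair of pants so that its complement consists of two ideal triangles, identify degeneracy of the image data with $f_*(\pi_1P)$ being cyclic, and otherwise straighten each ideal triangle totally geodesically using fixed points at infinity of the images of boundary elements. However, one step would fail as written. You fix the hyperbolic structure on $P$ in advance (with boundary lengths matching the geodesic representatives in $M$) and then assert that the straightenings of the two ideal triangles --- each being the unique isometry onto the ideal triangle spanned by the prescribed $\Phi$-images of its vertices --- ``agree on shared edges''. In general they do not: on a common spiralling edge both restrict to isometries onto the geodesic joining the two image ideal points, but these two isometries differ by a translation along that geodesic, namely by the discrepancy between the shear coordinate of your pre-chosen structure along that edge and the shear dictated by the configuration of the four image ideal vertices. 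Arranging this agreement (and hence the length-preserving property) is exactly the content to be supplied; matching boundary lengths alone does not give it without an argument (one could extract it from the invertibility of the linear relation between the three shears of a spun-triangulated pair of pants and its three boundary lengths, but you would need to prove this). The paper, following Thurston and Calegari, sidesteps the issue by not prescribing the domain metric: it straightens the map $f$ itself (boundary components to the closed geodesics in their homotopy classes, leaves to the geodesics with the prescribed endpoints, complementary ideal triangles totally geodesic) and takes on $P$ the hyperbolic structure \emph{induced} by the straightened map, for which the boundary lengths automatically equal the translation lengths of the boundary images and the map is pleated by construction.

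A secondary point: your non-degeneracy argument (``otherwise $\langle A,B\rangle$ would be cyclic'') is complete only for a lift whose vertices are fixed points of $A$, $B$, $C$ themselves, where $ABC=1$ forces all of $\langle A,B\rangle$ to preserve the common axis. For the other lifts the coinciding vertices are fixed points of conjugates $hAh^{-1}$, $kBk^{-1}$ with $h,k$ in the image group, and a shared fixed point only gives that those two conjugates generate a cyclic subgroup; concluding that $\langle A,B\rangle$ itself is cyclic requires a further (short) argument, e.g.\ in the surface case via the fact that two-generator torsion-free discrete groups of isometries of $\mathbb{H}^2$ are free or cyclic together with an abelianisation computation. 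The paper's own proof is equally terse here --- it only treats the case where all three leaves share an axis --- but since you made the claim explicit, you should supply this step, especially in the stated generality of a closed hyperbolic manifold.
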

    \begin{proof}
        Consider a lift $\tilde{f}:\tilde{P}\rightarrow\tilde{M}$ of $f$ to universal covers.
        Note that $\tilde{M}$ is a convex subset of the hyperbolic $n$-space $\mathbb{H}^n$, and $\tilde{P}$ is a convex subset of $\mathbb{H}^2$.
        Pick a geodesic triangle $\Delta$ in $P$ with one vertex on each boundary component.
        This lifts to a geodesic triangle $\tilde{\Delta}$ in a fundamental domain of $\tilde{P}\subseteq\mathbb{H}^2$.
        
        Now the spinning construction consists in dragging vertices of $\tilde{\Delta}$ along the lifts of $\partial P$ to $\mathbb{H}^2$, and moving them to the boundary $\partial\mathbb{H}^2$.
        See Figure \ref{fig:spin}.
        \begin{figure}[htb]
            \centering
            \def\scale{.7}
            \begin{subfigure}[c]{14em}
                \centering
                \begin{tikzpicture}[every node/.style={draw,circle,fill,inner sep=1pt},scale=\scale]
                    \def\ra{3}
                    \def\th{60}
                    
                    \def\ang{-75}
                    \def\rb{6}
                    \def\di{sqrt(\ra*\ra+\rb*\rb)}
                    \def\alpha{atan(\ra/\rb)}
                    \draw [thick,red, domain=94.25:114.5, variable=\s]
                        plot ({\di*cos(\ang)+\rb*cos(\s)},{\di*sin(\ang)+\rb*sin(\s)});

                    \def\ang{45}
                    \def\rb{9.4}
                    \def\di{sqrt(\ra*\ra+\rb*\rb)}
                    \def\alpha{atan(\ra/\rb)}
                    \draw [thick,red, domain=218.25:232.5, variable=\s]
                        plot ({\di*cos(\ang)+\rb*cos(\s)},{\di*sin(\ang)+\rb*sin(\s)});
                    
                    \def\ang{172.1}
                    \def\rb{8.5}
                    \def\di{sqrt(\ra*\ra+\rb*\rb)}
                    \def\alpha{atan(\ra/\rb)}
                    \draw [thick,red, domain=-.5:-15.35, variable=\s]
                        plot ({\di*cos(\ang)+\rb*cos(\s)},{\di*sin(\ang)+\rb*sin(\s)});
                        
                    \def\rb{(\ra*sqrt(((1-cos(\th))^2+sin(\th)^2)/(2-(1-cos(\th))^2-sin(\th)^2)))}
                    \def\di{sqrt(\ra*\ra+\rb*\rb)}
                    \def\alpha{atan(\ra/\rb)}
                
                    \node[circle,draw=gray!60,ultra thick,fill=none,minimum size=2*\ra*\scale cm] () at (0,0) {};
                    
                    \draw node[rectangle,draw=none,fill=none] at (2.75,-2.75) {$\mathbb{H}^2$};
                    \draw node[red,rectangle,draw=none,fill=none] at (0,0) {$\tilde{\Delta}$};
                    
                    \foreach \i in {1,3,5}{
                        \draw [black, domain=0:1, variable=\t]
                            plot ({\di*cos(\th*\i)+\rb*cos(180+(2*\t-1)*\alpha+\th*\i)}, {\di*sin(\th*\i)+\rb*sin(180+(2*\t-1)*\alpha+\th*\i)});
                    }
                    
                    \foreach \i in {0,2,4}{
                        \coordinate (a\i0) at ({\di*cos(\th*\i)+\rb*cos(180+(-1)*\alpha+\th*\i)},{\di*sin(\th*\i)+\rb*sin(180+(-1)*\alpha+\th*\i)});
                        \fill [gray!60, domain=0:1, variable=\t]
                            plot ({\di*cos(\th*\i)+\rb*cos(180+(2*\t-1)*\alpha+\th*\i)}, {\di*sin(\th*\i)+\rb*sin(180+(2*\t-1)*\alpha+\th*\i)})
                            -- plot ({\ra*cos((2*\t-1)*(90-\alpha)+\th*\i)}, {\ra*sin((2*\t-1)*(90-\alpha)+\th*\i)})
                            -- cycle;
                    }
                    
                    \def\ang{-75}
                    \def\rb{6}
                    \def\di{sqrt(\ra*\ra+\rb*\rb)}
                    \def\alpha{atan(\ra/\rb)}
                    \draw node [red,label=right:{\textcolor{red}{$\tilde{v}_1$}}] (p1) at ({\di*cos(\ang)+\rb*cos(94.25)},{\di*sin(\ang)+\rb*sin(94.25)}) {};
                    \coordinate [xshift=-.25cm] (end1) at (a00);
                    \draw [thick,dashed,red,->] (p1)+(-.25,.25) to [bend left=20] (end1);
                    \def\ang{45}
                    \def\rb{9.4}
                    \def\di{sqrt(\ra*\ra+\rb*\rb)}
                    \def\alpha{atan(\ra/\rb)}
                    \draw node [red,label=above:{\textcolor{red}{$\tilde{v}_2$}}] (p2) at ({\di*cos(\ang)+\rb*cos(218)},{\di*sin(\ang)+\rb*sin(218)}) {};
                    \coordinate [xshift=.1cm,yshift=-.15cm] (end2) at (a20);
                    \draw [thick,dashed,red,->] (p2)+(-.15,-.25) to [bend left=20] (end2);
                    \def\ang{172.1}
                    \def\rb{8.5}
                    \def\di{sqrt(\ra*\ra+\rb*\rb)}
                    \def\alpha{atan(\ra/\rb)}
                    \draw node [red,label=below left:{\textcolor{red}{$\tilde{v}_3$}}] (p3) at ({\di*cos(\ang)+\rb*cos(-15.35)},{\di*sin(\ang)+\rb*sin(-15.35)}) {};
                    \coordinate [xshift=.1cm,yshift=.1cm] (end3) at (a40);
                    \draw [thick,dashed,red,->] (p3)+(.25,0) to [bend left=20] (end3);
                    
                    \foreach \i in {1,2,3}{
                        \draw node[rectangle,draw=none,fill=none] at ({2.25*cos(2*\th*(\i-1)+\th/2)},{2.25*sin(2*\th*(\i-1)+\th/2)}) {$\tilde{\partial}_\i$};
                    }
                \end{tikzpicture}
            \end{subfigure}
            \quad{\Huge$\rightsquigarrow$}\quad
            \begin{subfigure}[c]{14em}
                \centering
                \begin{tikzpicture}[every node/.style={draw,circle,fill,inner sep=1pt},scale=\scale]
                    \def\ra{3}
                    \def\th{60}
                    
                    \def\rb{(\ra*sqrt(((1-cos(\th))^2+sin(\th)^2)/(2-(1-cos(\th))^2-sin(\th)^2)))}
                    \def\di{sqrt(\ra*\ra+\rb*\rb)}
                    \def\alpha{atan(\ra/\rb)}
                
                    \node[circle,draw=gray!60,ultra thick,fill=none,minimum size=2*\ra*\scale cm] () at (0,0) {};
                    
                    \draw node[rectangle,draw=none,fill=none] at (2.75,-2.75) {$\mathbb{H}^2$};
                    
                    \foreach \i in {1,3,5}{
                        \draw [black, domain=0:1, variable=\t]
                            plot ({\di*cos(\th*\i)+\rb*cos(180+(2*\t-1)*\alpha+\th*\i)}, {\di*sin(\th*\i)+\rb*sin(180+(2*\t-1)*\alpha+\th*\i)});
                    }
                    
                    \def\ang{-15}
                    \def\rb{5.2}
                    \def\di{sqrt(\ra*\ra+\rb*\rb)}
                    \def\alpha{atan(\ra/\rb)}
                    \draw [thick,red, domain=0:1, variable=\t]
                        plot ({\di*cos(\ang)+\rb*cos((1-\t)*(180+\ang-\alpha)+\t*(180+\ang+\alpha))},{\di*sin(\ang)+\rb*sin((1-\t)*(180+\ang-\alpha)+\t*(180+\ang+\alpha))});
                    
                    \def\ang{105}
                    \def\rb{5.2}
                    \def\di{sqrt(\ra*\ra+\rb*\rb)}
                    \def\alpha{atan(\ra/\rb)}
                    \draw [thick,red, domain=0:1, variable=\t]
                        plot ({\di*cos(\ang)+\rb*cos((1-\t)*(180+\ang-\alpha)+\t*(180+\ang+\alpha))},{\di*sin(\ang)+\rb*sin((1-\t)*(180+\ang-\alpha)+\t*(180+\ang+\alpha))});
                    
                    \def\ang{225}
                    \def\rb{5.2}
                    \def\di{sqrt(\ra*\ra+\rb*\rb)}
                    \def\alpha{atan(\ra/\rb)}
                    \draw [thick,red, domain=0:1, variable=\t]
                        plot ({\di*cos(\ang)+\rb*cos((1-\t)*(180+\ang-\alpha)+\t*(180+\ang+\alpha))},{\di*sin(\ang)+\rb*sin((1-\t)*(180+\ang-\alpha)+\t*(180+\ang+\alpha))});
                    
                    \def\rb{(\ra*sqrt(((1-cos(\th))^2+sin(\th)^2)/(2-(1-cos(\th))^2-sin(\th)^2)))}
                    \def\di{sqrt(\ra*\ra+\rb*\rb)}
                    \def\alpha{atan(\ra/\rb)}
                    
                    \foreach \i [evaluate={\theta=\th*\i}] in {0,2,4}{
                        \fill [gray!60, domain=0:1, variable=\t]
                            plot ({\di*cos(\th*\i)+\rb*cos(180+(2*\t-1)*\alpha+\th*\i)}, {\di*sin(\th*\i)+\rb*sin(180+(2*\t-1)*\alpha+\th*\i)})
                            -- plot ({\ra*cos((2*\t-1)*(90-\alpha)+\th*\i)}, {\ra*sin((2*\t-1)*(90-\alpha)+\th*\i)})
                            -- cycle;
                    }
                \end{tikzpicture}
            \end{subfigure}
            \caption{Thurston's spinning construction.}
            \label{fig:spin}
        \end{figure}
        This construction is called \emph{spinning} because, in $P$, the triangle $\Delta$ has been spun around the boundary components of $P$.
        In this way, one obtains a geodesic lamination $\Lambda$ on $P$ with three leaves, whose complement consists of two open ideal triangles.
        
        There are two cases:
        \begin{enumerate}
            \item If $f(\Lambda)$ is degenerate (i.e.\ the images of the three leaves of $\Lambda$ have the same axis in $\tilde{M}$), then $f_*\left(\pi_1P\right)$ generates a cyclic subgroup of $\pi_1M$.
            
            \item Otherwise, construct a map $f':P\rightarrow M$ homotopic to $f$ as follows.
            For each boundary component $\partial_i$ of $P$, we define $f'\left(\partial_i\right)$ to be the unique closed geodesic in the homotopy class of $f\left(\partial_i\right)$.
            Each leaf $\lambda_i$ of $\Lambda$ is mapped under $f$ to a quasi-geodesic in $M$, which can be straightened to a geodesic $\gamma_i$.
            Set $f'\left(\lambda_i\right)=\gamma_i$.
            Finally, each component of $P\smallsetminus\Lambda$ is an open ideal triangle, and since its image is nondegenerate in $M$, there is a unique totally geodesic extension of $f'$ to this triangle.\qedhere
        \end{enumerate}
    \end{proof}
    
    Using Thurston's spinning construction, we can obtain pleated admissible surfaces.
    This is an adaptation of a lemma of Calegari \cite{cal-scl}*{Lemma 3.7}:
    
    \begin{lmm}[Pleated admissible surfaces]\label{lmm:pleated-adm-srf}
        Let $M$ be a compact hyperbolic surface or a closed hyperbolic manifold.
        Let $\gamma:\coprod S^1\rightarrow M$ be a collection of geodesic loops in $M$, no two components of which have the same image in $M$.
        Then for every rational class $\alpha\in H_2\left(M,\gamma;\mathbb{Q}\right)$ and for every $\varepsilon>0$, there is a pleated admissible surface $f:\left(\Sigma,\partial\Sigma\right)\rightarrow\left(M,\gamma\right)$ such that $f_*[\Sigma]=n(\Sigma)\alpha$ for some $n(\Sigma)\in\mathbb{N}_{\geq1}$, and
        \begin{equation}\label{eq:estim-gromnorm-2}
            \gromnorm{\alpha}\leq\frac{-2\chi^-(\Sigma)}{n(\Sigma)}\leq\gromnorm{\alpha}+\varepsilon.
        \end{equation}
    \end{lmm}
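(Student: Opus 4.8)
The plan is to adapt Calegari's proof of the corresponding statement for $\scl$ \cite{cal-scl}*{Lemma 3.7}, using Lemma \ref{lmm:simple-incompr-adm-srf} in place of extremal admissible surfaces. First I would apply Lemma \ref{lmm:simple-incompr-adm-srf} to obtain a simple, incompressible admissible surface $f\colon(\Sigma,\partial\Sigma)\to(M,\gamma)$ with $f_*[\Sigma]=n(\Sigma)\alpha$ for some $n(\Sigma)\in\mathbb{N}_{\geq1}$ and $\gromnorm{\alpha}\leq-2\chi^-(\Sigma)/n(\Sigma)\leq\gromnorm{\alpha}+\varepsilon$. The key remark is that the left-hand inequality holds for \emph{every} admissible surface representing $n(\Sigma)\alpha$, by the $(\leq)$ direction of Proposition \ref{prop:ell1-gromnorm}; so it is enough to modify $f$ into a \emph{pleated} admissible surface by operations that fix $f_*[\Sigma]$ as a multiple of $\alpha$ and do not increase $-2\chi^-(\Sigma)/n(\Sigma)$.

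Next I would reduce to the case in which every component of $\Sigma$ has negative Euler characteristic. Since $\Sigma$ is orientable, its components of non-negative Euler characteristic are spheres, discs, annuli, and tori; simplicity rules out annulus components; the sphere and disc components contribute trivially to $f_*[\Sigma]$, using the asphericity of $M$ and the fact that no component of $\gamma$ is nullhomotopic (so that the boundary degree of a disc component vanishes); and a torus component $K$ has $f_*\pi_1 K$ cyclic because $\pi_1 M$ contains no copy of $\mathbb{Z}^2$, whence $f_*[K]=0$. Discarding all such components changes neither $f_*[\Sigma]$ nor $\chi^-(\Sigma)=\chi(\Sigma)$; if nothing remains then $\alpha=0$ and there is nothing to prove.

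I would then fix a pants decomposition $\mathcal{C}$ of $\Sigma$ and a hyperbolic structure on $\Sigma$ with geodesic boundary in which each curve of $\mathcal{C}\cup\partial\Sigma$ is a simple closed geodesic, choosing the length parameters so that, after an auxiliary homotopy, $f$ restricts on each such curve to a locally isometric cover of the corresponding closed geodesic of $M$ — the degree being the exponent with which $f$ wraps that curve, which is nonzero on curves of $\mathcal{C}$ by incompressibility and which agrees from the two adjacent pairs of pants, so that this does define a structure on $\Sigma$. On each pair of pants $P$ of the decomposition I would run Thurston's spinning construction, Lemma \ref{lmm:spinning}, spinning the ideal triangulation of $P$ around all three boundary curves; since $f|_{\partial P}$ already lies on the relevant geodesics, this does not further change $f|_{\partial P}$. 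If $f_*\pi_1 P$ is non-cyclic, the construction homotopes $f|_P$ rel $\partial P$ to a map that is totally geodesic on the two complementary ideal triangles and sends leaves to geodesics of $M$; if $f_*\pi_1 P=\langle g\rangle$ is cyclic with $g\neq1$ (the case $g=1$ having been excluded above), I would homotope $f|_P$ rel $\partial P$ into the closed geodesic representing $g$, obtaining a degenerately pleated piece. Each of these homotopies is supported in the interior of the corresponding pair of pants, so they paste to a homotopy of $f$ leaving $f_*[\Sigma]$, $\chi^-(\Sigma)$, and $n(\Sigma)$ unchanged.

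Finally, the curves of $\mathcal{C}$ together with the spun laminations on the pieces assemble into a geodesic lamination $\Lambda\subseteq\Sigma$ whose complement is a union of ideal triangles, on which $f$ is totally geodesic and whose leaves $f$ sends to geodesics, while $f|_{\partial\Sigma}$ is a local isometry onto $\gamma$; hence $f\colon(\Sigma,\partial\Sigma)\to(M,\gamma)$ is the desired pleated admissible surface, with $f_*[\Sigma]=n(\Sigma)\alpha$ and $\gromnorm{\alpha}\leq-2\chi^-(\Sigma)/n(\Sigma)\leq\gromnorm{\alpha}+\varepsilon$. The main obstacle is the treatment of the pairs of pants $P$ with $f_*\pi_1 P$ cyclic: one must check that the associated degenerate ideal triangles are still admissible in the definition of a pleated surface — alternatively, one performs surgeries along the boundary curves of such pieces, as in \cite{cal-scl}, which do not increase $-\chi^-(\Sigma)/n(\Sigma)$ — and one must verify that the per-piece spinning constructions are mutually compatible along $\mathcal{C}$; both points are handled exactly as in Calegari's argument.
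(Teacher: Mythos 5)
Your overall skeleton matches the paper's: start from a simple, incompressible admissible surface given by Lemma \ref{lmm:simple-incompr-adm-srf} (noting that the left inequality in (\ref{eq:estim-gromnorm-2}) is automatic from Proposition \ref{prop:ell1-gromnorm}), take a pants decomposition, and apply Thurston's spinning construction (Lemma \ref{lmm:spinning}) piece by piece; your preliminary reduction discarding sphere, disc, annulus and torus components is a sensible supplement. But there is a genuine gap at exactly the point you defer: the pairs of pants $P$ with $f_*\left(\pi_1P\right)$ cyclic. Lemma \ref{lmm:spinning} simply does not produce a pleated map in that case, and your first workaround --- homotoping $f_{\left|P\right.}$ onto the closed geodesic representing the generator to get a ``degenerately pleated piece'' --- does not yield a pleated surface in the sense used in this paper: a map collapsing $P$ onto a geodesic fails condition (i) (arcs must be sent to arcs of the same length) and is not totally geodesic off a lamination, and the downstream use of pleatedness (Lemma \ref{lmm:area-eub}, where each ideal triangle of the pleating contributes $\pm\pi$ or $0$ via Gau\ss{}--Bonnet and the orientation cocycle) relies on the honest definition. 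Your second workaround, ``surgeries along the boundary curves of such pieces,'' is not available as stated: incompressibility forces the pants curves to have non-nullhomotopic image, so there is nothing to compress along, and any cut-and-reglue modification must be shown to preserve $f_*[\Sigma]=n(\Sigma)\alpha$ and the bound on $-\chi^-(\Sigma)/n(\Sigma)$, which you do not do.

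The paper's proof spends most of its effort precisely here, showing that after suitable modifications \emph{no} pair of pants has cyclic image, so the degenerate case never arises. It sorts the pants into three types: those forming the ``spine'' carrying the boundary components, where simplicity together with the hypothesis that no two components of $\gamma$ have the same image forces non-cyclic image; those sitting in a twice-punctured torus, where a cyclic image is removed by changing the pants decomposition by a Dehn twist along the equator, non-cyclicity then following from incompressibility; and those glued to themselves into a once-punctured torus, where cyclicity is ruled out using incompressibility together with the CSA/malnormality property of the hyperbolic group $\pi_1M$. None of these arguments (nor any substitute for them) appears in your proposal, so as written it does not establish the lemma. A smaller point: the spinning homotopies are not rel $\partial P$ --- the pants curves get homotoped to the closed geodesics in their free homotopy classes --- so compatibility along the decomposition curves should be argued as in the paper (the boundary of each pair of pants is homotoped to the unique geodesic representative), rather than by fixing $f$ on $\mathcal{C}$ in advance.
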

    \begin{proof}
        By Lemma \ref{lmm:simple-incompr-adm-srf}, there is a simple, incompressible, admissible surface $f:\left(\Sigma,\partial\Sigma\right)\rightarrow\left(M,\gamma\right)$ satisfying (\ref{eq:estim-gromnorm-2}), with $f_*\left[\Sigma\right]=n(\Sigma)\alpha$ for some $n(\Sigma)\in\mathbb{N}_{\geq1}$.
        Now take a pants decomposition $\left\{P_i\right\}_i$ of $\Sigma$, as in Figure \ref{fig:pants-dec}.
        The idea is to apply the spinning construction (Lemma \ref{lmm:spinning}) to each $P_i$.
        We can perform the construction separately on each connected component of $\Sigma$; to simplify notations, we therefore assume that $\Sigma$ is connected.
        \begin{figure}[htb]
            \centering
            \begin{tikzpicture}[scale=0.75,rotate=-90]
                \pic[
                transform shape,
                name=a,
                tqft,
                cobordism edge/.style={draw},
                incoming boundary components=0,
                outgoing boundary components=2,
                offset=-.5,
                thick,
                ];
                \pic[
                transform shape,
                name=b,
                tqft,
                cobordism edge/.style={draw},
                incoming boundary components=2,
                outgoing boundary components=1,
                incoming upper boundary component 1/.style={draw,thick,RoyalPurple},
                incoming lower boundary component 1/.style={draw,thick,dotted,RoyalPurple},
                offset=.5,
                at=(a-outgoing boundary),
                anchor=incoming boundary,
                thick,
                ];
                \pic[
                transform shape,
                name=c,
                tqft,
                cobordism edge/.style={draw},
                incoming upper boundary component 1/.style={draw,thick},
                incoming lower boundary component 1/.style={draw,thick,dotted},
                incoming boundary components=1,
                outgoing boundary components=2,
                offset=-.5,
                at=(b-outgoing boundary),
                thick,
                ];
                \pic[
                transform shape,
                name=d,
                tqft,
                cobordism edge/.style={draw},
                incoming boundary components=2,
                outgoing boundary components=1,
                incoming upper boundary component 1/.style={draw,thick,red},
                incoming lower boundary component 1/.style={draw,thick,dotted,red},
                incoming upper boundary component 2/.style={draw,thick,red},
                incoming lower boundary component 2/.style={draw,thick,dotted,red},
                offset=.5,
                at=(c-outgoing boundary),
                anchor=incoming boundary,
                thick,
                ];
                \pic[
                transform shape,
                name=e,
                tqft,
                cobordism edge/.style={draw},
                incoming upper boundary component 1/.style={draw,thick},
                incoming lower boundary component 1/.style={draw,thick,dotted},
                incoming boundary components=1,
                outgoing boundary components=2,
                offset=-.5,
                at=(d-outgoing boundary),
                thick,
                ];
                \pic[
                transform shape,
                name=f,
                tqft,
                cobordism edge/.style={draw},
                incoming boundary components=2,
                outgoing boundary components=1,
                incoming upper boundary component 1/.style={draw,thick},
                incoming lower boundary component 1/.style={draw,thick,dotted},
                incoming upper boundary component 2/.style={draw,thick},
                incoming lower boundary component 2/.style={draw,thick,dotted},
                offset=.5,
                at=(e-outgoing boundary),
                anchor=incoming boundary,
                thick,
                ];
                \pic[
                transform shape,
                name=g,
                tqft,
                cobordism edge/.style={draw},
                incoming upper boundary component 1/.style={draw,thick},
                incoming lower boundary component 1/.style={draw,thick,dotted},
                outgoing upper boundary component 2/.style={draw,thick},
                outgoing lower boundary component 2/.style={draw,thick},
                incoming boundary components=1,
                outgoing boundary components=2,
                offset=-.5,
                at=(f-outgoing boundary),
                thick,
                ];
                \pic[
                transform shape,
                name=h,
                tqft,
                cobordism edge/.style={draw},
                incoming boundary components=1,
                outgoing boundary components=2,
                incoming upper boundary component 1/.style={draw,thick},
                incoming lower boundary component 1/.style={draw,thick,dotted},
                outgoing upper boundary component 1/.style={draw,thick},
                outgoing lower boundary component 1/.style={draw,thick},
                outgoing upper boundary component 2/.style={draw,thick},
                outgoing lower boundary component 2/.style={draw,thick},
                offset=-.5,
                at=(g-outgoing boundary 1),
                anchor=incoming boundary,
                thick,
                ];
                
                \draw node[draw=none] at (-1,-4) {$\Sigma$};
                \foreach \i [evaluate={\x=(-1-2*(7-\i))}] in {1,...,6}{\draw node[draw=none] at (0,\x) {$P_{\i}$};}
                \draw node[draw=none] at (-1,-15) {$P_0$};
                \draw node[red,draw=none] at (1.7,-6) {{\small$\partial_-$}};
                \draw node[red,draw=none] at (-1.7,-6) {{\small$\partial_+$}};
                \draw node[RoyalPurple,draw=none] at (-1.7,-2) {{\small$\partial_1$}};
                
                \draw [ForestGreen,thick] (0,-6) ellipse[x radius=2.75em,y radius=2em];
                \draw node[ForestGreen,draw=none] at (.9,-6.65) {{\small$\alpha$}};
                \draw [Peach,thick] (0,-2) ellipse[x radius=2.75em,y radius=2em];
                \draw node[Peach,draw=none] at (.9,-2.65) {{\small$\beta$}};
            \end{tikzpicture}
            \caption{Pants decomposition of $\Sigma$.}
            \label{fig:pants-dec}
        \end{figure}
        There are three types of components in the pants decomposition:
        \begin{enumerate}
            \item Pairs of pants that are part of a twice-punctured torus (e.g. $P_2,\dots,P_5$ in Figure \ref{fig:pants-dec}),\label{pants-dec-type-2}
            \item Pairs of pants that are glued to themselves to form a once-punctured torus (e.g. $P_6$ in Figure \ref{fig:pants-dec}),\label{pants-dec-type-1}
            \item Pairs of pants that are not of type \ref{pants-dec-type-2} or \ref{pants-dec-type-1} (e.g. $P_0,P_1$ in Figure \ref{fig:pants-dec}).\label{pants-dec-type-3}
        \end{enumerate}
        
        Fix a pants component $P_i$.
        We want to apply Lemma \ref{lmm:spinning} to the restriction $f_{\left|P_i\right.}:P_i\rightarrow M$; we need to ensure that $f_*\left(\pi_1P_i\right)$ is non-cyclic.
        We distinguish three cases, based on the type of $P_i$.
        
        We first show that, if $P_i$ of type \ref{pants-dec-type-3}, then $f_*\left(\pi_1P_i\right)$ cannot be cyclic.
        Recall from \S{}\ref{subsec:1-chns} that $\gamma$ represents $[c]=\left[\sum_jw_j\right]\in C_1^{\hmtp}\left(\pi_1M;\mathbb{Z}\right)$, where no two of the $w_j$'s are conjugate or generate a cyclic subgroup of $\pi_1M$ (by assumption on $\gamma$).
        Since $f$ is an admissible surface, each boundary component of $\Sigma$ maps to a power of some $w_j$, and simplicity implies that no two boundary components of $\Sigma$ map to powers of the same $w_j$.
        We can assume that the components $\left\{P_i\right\}_i$ of type \ref{pants-dec-type-3} are ordered as $\left\{P_0,\dots,P_k\right\}$, in such a way that $P_0$ has two boundary components on $\partial\Sigma$, and each $P_i$ is glued to $P_{i-1}$ along one boundary component and has one boundary component on $\partial\Sigma$ (this is consistent with the notations of Figure \ref{fig:pants-dec}, where $k=1$).
        With these notations, we can order the $w_j$'s in such a way that $f_*\left(\pi_1P_0\right)=\left<w_0,w_1\right>$, and each $P_i$ has one boundary component glued to $P_{i-1}$ and whose image represents an element of $\left<w_0,\dots,w_i\right>$, and one boundary component lying on $\partial\Sigma$, and whose image represents a power of $w_{i+1}$.
        In particular, it follows that $f_*\left(\pi_1P_i\right)$ is not cyclic for any $P_i$ of type \ref{pants-dec-type-3}.
        
        Now assume that $P_i$ is of type \ref{pants-dec-type-2}.
        Two of the boundary components $\partial_+$ and $\partial_-$ of $P_i$ are meridians in a twice-punctured torus ($\partial_\pm$ are depicted in Figure \ref{fig:pants-dec} for $P_i=P_4$).
        Let $\alpha$ be the equator of this twice-punctured torus and let $\delta_\alpha:\Sigma\rightarrow\Sigma$ denote the Dehn twist along $\alpha$.
        If $f_*\left(\pi_1P_i\right)=\left<f_*\left(\partial_+\right),f_*\left(\partial_-\right)\right>$ is cyclic, then replace $\partial_\pm$ with $\delta_\alpha\partial_\pm$; this amounts to defining a new pants decomposition of $\Sigma$.
        For this pants decomposition, $\left<f_*\left(\partial_+\right),f_*\left(\partial_-\right)\right>$ is not cyclic because $f_*\alpha$ and $f_*\partial_\pm$ do not commute by incompressibility (otherwise $\left[\alpha,\partial_\pm\right]$ would be represented by a simple closed curve in $\Sigma$ with nullhomotopic image in $M$).
        It might be that, after this modification, the adjacent pair of pants $P_{j}$ in the same twice-punctured torus as $P_i$ has cyclic image in $\pi_1M$.
        In this case, one applies the Dehn twist $\delta_\alpha$ a second time.
                
        Assume finally that $P_i$ is of type \ref{pants-dec-type-1}.
        Then $P_i$ is glued to itself to form a once-punctured torus.
        Denote by $\partial_1$ one of the two boundary components of $P_i$ that is glued to form a meridian in the once-punctured torus, and by $\beta$ the equator ($\partial_1$ and $\beta$ are depicted in Figure \ref{fig:pants-dec} for $P_i=P_6$).
        Then $f_*\left(\pi_1P_i\right)=\left<f_*\left(\partial_1\right),f_*\left(\partial_1\right)^{f_*\left(\beta\right)}\right>$, where the exponent denotes conjugation.
        If $f_*\left(\pi_1P_i\right)$ is cyclic, then there are $w\in\pi_1M$ and $k,\ell\in\mathbb{Z}$ such that
        \begin{equation*}
            f_*\left(\partial_1\right)=w^k=f_*(\beta)w^\ell f_*(\beta)^{-1}.
        \end{equation*}
        But $\pi_1M$ is Gromov-hyperbolic, and therefore it is known to be a \emph{CSA group}, in the sense that all its maximal abelian subgroups are malnormal --- see \cite{delaharpe-weber}*{Example 10}.
        Hence $\left<w\right>$ is malnormal (after possibly replacing $w$ with a generator of a maximal abelian subgroup containing it), and $f_*\left(\partial_1\right)\in\left<w\right>\cap\left<w\right>^{f_*(\beta)}\smallsetminus\{1\}$, so $f_*(\beta)\in\left<w\right>$.
        In particular, $f_*\left[\partial_1,\beta\right]=1$, which contradicts incompressibility.
        This proves that $f_*\left(\pi_1P_i\right)$ cannot be cyclic.
        
        Therefore, after performing the above modifications, we have a pants decomposition of $\Sigma$ for which $f_*\left(\pi_1P_i\right)$ is never a cyclic subgroup of $\pi_1M$.
        By Lemma \ref{lmm:spinning}, the restriction of $f$ to each $P_i$ can be homotoped to a pleated map.
        Moreover, these homotopies can be performed simultaneously as the image of each boundary component of a pair of pants is homotoped to the unique geodesic in its homotopy class.
        Hence, we obtain a pleated map homotopic to $f$, which is still an admissible surface and satisfies (\ref{eq:estim-gromnorm-2}).
    \end{proof}
    
    \begin{rmk}
        In fact, we will not need the estimate (\ref{eq:estim-gromnorm-2}) on the Gromov seminorm in Lemma \ref{lmm:pleated-adm-srf}: it will be enough for us to know that every rational class is represented by a pleated admissible surface.
    \end{rmk}

    \subsection{Bounded Euler class and area}
    
    A hyperbolic structure on a surface $S$ induces an action of $\pi_1S$ on the boundary of the hyperbolic plane, which is a circle.
    Hence, we get a circle action $\rho:\pi_1S\rightarrow\Homeo^+\left(S^1\right)$, defining a bounded Euler class $\eu_b^\mathbb{R}(\rho)\in H^2_b\left(\pi_1S;\mathbb{R}\right)$ as explained in \S{}\ref{subsec:eub}.
    We will call it the \emph{bounded Euler class of $S$} and denote it by $\eu_b^\mathbb{R}(S)$.
    It can also be seen as an element of $H^2_b(S;\mathbb{R})$ --- see \S{}\ref{subsec:hb-gp}.
    
    The following is implicit in Calegari's book \cite{cal-scl}*{Lemma 4.68}:
    
    \begin{lmm}[Bounded Euler class and area]\label{lmm:area-eub}
        Let $\gamma:\coprod S^1\rightarrow S$ be a collection of geodesic loops in a compact hyperbolic surface $S$.
        Let $\alpha\in H_2\left(S,\gamma;\mathbb{Q}\right)$ be a rational class.
        Then
        \[
            \area(\alpha)=-2\pi\left<\eu_b^\mathbb{R}(S),\alpha\right>.
        \]
    \end{lmm}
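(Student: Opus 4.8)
The plan is to reduce the identity to Gauss--Bonnet on a pleated admissible surface.

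First I would observe that both sides of the claimed equality are $\mathbb{Q}$-linear functionals of $\alpha\in H_2(S,\gamma;\mathbb{Q})$: the left-hand side because $\area(\alpha)$ is read off the unique cellular relative $2$-cycle representing the image of $\alpha$ in $H_2\bigl(S,\gamma(\coprod S^1);\mathbb{R}\bigr)$, and this image depends linearly on $\alpha$; the right-hand side because the Kronecker product is bilinear and $\eu_b^\mathbb{R}(S)$ is fixed. After a harmless reduction to the case where no two components of $\gamma$ have the same image in $S$ (both sides being natural under the corresponding pushforward on relative homology), Lemma~\ref{lmm:pleated-adm-srf} provides, for each rational $\alpha$, a \emph{pleated} admissible surface $f:\bigl(\Sigma,\partial\Sigma\bigr)\looparrowright(S,\gamma)$ with $f_*[\Sigma]=n(\Sigma)\alpha$ for some $n(\Sigma)\in\mathbb{N}_{\geq1}$; by linearity it then suffices to prove $\area\bigl(f_*[\Sigma]\bigr)=-2\pi\bigl\langle\eu_b^\mathbb{R}(S),f_*[\Sigma]\bigr\rangle$. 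I would also record, as in the computation of simplicial volume by ideal triangulations, that $\area\bigl(f_*[\Sigma]\bigr)$ is the signed sum of the hyperbolic areas of the geodesic ideal triangles cut out on the pleated surface by the pleating locus, the sign on a triangle recording whether $f$ preserves or reverses orientation there.

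The core computation is then triangle by triangle. The pleated structure on $\Sigma$ arising from the spinning construction (Lemma~\ref{lmm:spinning}) carries an ideal triangulation $\mathcal{T}$ --- the components of the complement of the pleating locus --- on each triangle of which $f$ is totally geodesic. Fixing a lift $\tilde f:\widetilde\Sigma\to\mathbb{H}^2$, every $T\in\mathcal{T}$ is carried to the totally geodesic ideal triangle with ideal vertices $(x_T,y_T,z_T)\in(S^1)^3$, listed in the cyclic order determined by the orientation of $T$. Since a nondegenerate ideal hyperbolic triangle has area exactly $\pi$ (and both area and $\Or$ vanish on degenerate triples), the preceding paragraph gives
\[
    \area\bigl(f_*[\Sigma]\bigr)=\pi\sum_{T\in\mathcal{T}}\Or(x_T,y_T,z_T).
\]
On the other hand, writing $\rho:\pi_1S\to\Homeo^+(S^1)$ for the action on $\partial\mathbb{H}^2$ and unwinding the definition $\eu_b^\mathbb{R}(S)=\rho^*\bigl(-\tfrac12\eta[\Or]\bigr)$ through Remark~\ref{rk:equiv-cohom}, I would compute $\bigl\langle\eu_b^\mathbb{R}(S),f_*[\Sigma]\bigr\rangle=\bigl\langle f^*\eu_b^\mathbb{R}(S),[\Sigma]\bigr\rangle$, represent $f^*\eu_b^\mathbb{R}(S)$ by the orientation cocycle of the induced $\pi_1\Sigma$-action, and evaluate it against the $\pi_1\Sigma$-equivariant family of ideal $2$-simplices carried by $\widetilde{\mathcal{T}}$ into $\partial\mathbb{H}^2=S^1$: each $T$ contributes $-\tfrac12\Or(x_T,y_T,z_T)$, whence $\bigl\langle\eu_b^\mathbb{R}(S),f_*[\Sigma]\bigr\rangle=-\tfrac12\sum_{T\in\mathcal{T}}\Or(x_T,y_T,z_T)$. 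Comparing the two formulas yields the lemma.

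The hard part will be making the last evaluation rigorous, i.e. justifying that $\bigl\langle\eu_b^\mathbb{R}(S),f_*[\Sigma]\bigr\rangle$ is the signed count $-\tfrac12\sum_T\Or(x_T,y_T,z_T)$. This rests on two points: choosing a cocycle representative of $\eu_b^\mathbb{R}(S)$ adapted to the ideal triangulation --- the orientation cocycle based at an \emph{ideal} point of $\partial\mathbb{H}^2$ rather than at an interior point --- and checking it still represents $\eu_b^\mathbb{R}(S)$, which is the standard base-point independence of the orientation/area cocycle, proved by letting the base point degenerate to the boundary in a finite-additivity-of-signed-area argument; and the bookkeeping of orientations, of the possibly several $\pi_1\Sigma$-orbits of ideal vertices of a spun triangulation, and of the compatibility of the relative pairing from $H^2_b(S,\gamma)\cong H^2_b(S)$ with the collapse $\bigl(S_\gamma,\coprod S^1\bigr)\to\bigl(S,\gamma(\coprod S^1)\bigr)$. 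The geometric content, by contrast, is the single clean fact --- Gauss--Bonnet --- that an ideal hyperbolic triangle has area $\pi$.
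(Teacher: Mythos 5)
Your proposal follows essentially the same route as the paper: take a pleated admissible surface representing a multiple of $\alpha$ (Lemma \ref{lmm:pleated-adm-srf}), use the ideal triangulation coming from the pleating locus, evaluate $\eu_b^\mathbb{R}(S)$ triangle by triangle via the orientation cocycle, and conclude by Gau\ss--Bonnet since each nondegenerate ideal triangle has area $\pi$. The points you flag as ``the hard part'' (adapting the orientation-cocycle representative to ideal vertices, base-point independence, and the compatibility of the relative pairing) are precisely the ones the paper's proof also passes over without comment, so your outline is, if anything, more explicit about what needs checking.
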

    \begin{proof}
        Lemma \ref{lmm:pleated-adm-srf} yields a pleated admissible surface $f:(\Sigma,\partial\Sigma)\rightarrow(S,\gamma)$ with $f_*[\Sigma]=n(\Sigma)\alpha$ for some $n(\Sigma)\in\mathbb{N}_{\geq1}$.
        Hence,
        \[
            \left<\eu_b^\mathbb{R}(S),\alpha\right>=\frac{1}{n(\Sigma)}\left<\eu_b^\mathbb{R}(S),f_*[\Sigma]\right>.
        \]        
        Recall from \S{}\ref{subsec:eub} that $\eu_b^\mathbb{R}$ is defined as the image of $-\frac{1}{2}\left[\Or\right]$ in $H^2_b\left(\Homeo^+\left(S^1\right)\right)$.
        The pleated structure on $\Sigma$ defines an ideal triangulation, and the Kronecker product $\left<\eu_b^\mathbb{R}(S),f_*[\Sigma]\right>$ is therefore given by
        \[
            \left<\eu_b^\mathbb{R}(S),f_*[\Sigma]\right>=-\frac{1}{2}\sum_{\sigma}\Or\left(f(\sigma)\right),
        \]
        where the sum is over all triangles $\sigma$ in this ideal triangulation, and $\Or\left(f(\sigma)\right)$ is $+1$ if $f(\sigma)$ is positively oriented, $-1$ if $f(\sigma)$ is negatively oriented, and $0$ if $f(\sigma)$ is degenerate.
        But each $f(\sigma)$ is an ideal triangle in $S$, and contributes $\pi\Or\left(f(\sigma)\right)$ to $\area\left(\sum_\sigma f(\sigma)\right)$ by the Gauß--Bonnet Theorem.
        Therefore,
        \begin{align*}
            \area(\alpha)&=\frac{1}{n(\Sigma)}\area\left(f_*[\Sigma]\right)=\frac{1}{n(\Sigma)}\area\left(\sum_\sigma f(\sigma)\right)\\
            &=\frac{\pi}{n(\Sigma)}\sum_\sigma\Or\left(f(\sigma)\right)=-\frac{2\pi}{n(\Sigma)}\left<\eu^b_\mathbb{R}(S),f_*[\Sigma]\right>\\
            &=-2\pi\left<\eu_b^\mathbb{R}(S),\alpha\right>.\qedhere
        \end{align*}
    \end{proof}
    
    A class $\alpha\in H_2\left(S,\gamma;\mathbb{R}\right)$ is said to be \emph{projectively represented by a positive immersion} if there is an admissible surface $f:(\Sigma,\partial\Sigma)\rightarrow(S,\gamma)$ with $f_*[\Sigma]=n(\Sigma)\alpha$ for some $n(\Sigma)\in\mathbb{N}_{\geq1}$, and such that $f$ is an orientation-preserving immersion.
    
    The following is now a straightforward generalisation of a result of Calegari \cite{cal-scl}*{Lemma 4.62}:
    
    \begin{mthm}{E}[Extremality of the bounded Euler class]\label{thm:eub-extr}
        Let $\gamma:\coprod S^1\rightarrow S$ be a collection of geodesic loops in a compact hyperbolic surface $S$.
        Let $\alpha\in H_2(S,\gamma;\mathbb{Q})$ be projectively represented by a positive immersion $f:\left(\Sigma,\partial\Sigma\right)\looparrowright\left(S,\gamma\right)$.
        Then
        \[
            \gromnorm{\alpha}=\frac{-2\chi^-(\Sigma)}{n(\Sigma)}=-2\left<\eu_b^\mathbb{R}(S),\alpha\right>.
        \]
        In other words, $f$ is an extremal surface and $-\eu_b^\mathbb{R}(S)$ is an extremal class for $\alpha$.
        In particular, $\gromnorm{\alpha}\in\mathbb{Q}$.
    \end{mthm}
    \begin{proof}
        Note that $\Sigma$ inherits a hyperbolic structure from $S$ for which $f$ is a local isometric embedding, and $\area(\Sigma)=n(\Sigma)\area(\alpha)$ (see Remark \ref{rk:isom-adm-surf-area}).
        By the Gauß--Bonnet Theorem,
        \[
            -2\pi\chi^-(\Sigma)=-2\pi\chi(\Sigma)=\area(\Sigma)=n(\Sigma)\area(\alpha).
        \]
        Therefore (using the topological interpretation of $\gromnorm{\cdot}$ --- see Proposition \ref{prop:ell1-gromnorm}),
        \[
            \gromnorm{\alpha}\leq\frac{-2\chi^-(\Sigma)}{n(\Sigma)}=\frac{1}{\pi}\area(\alpha)=-2\left<\eu_b^\mathbb{R}(S),\alpha\right>,
        \]
        where the last equality follows from Lemma \ref{lmm:area-eub}.
        We have $\inftynorm{\eu_b^\mathbb{R}(S)}\leq\frac{1}{2}$, so Bavard duality for $\gromnorm{\cdot}$ (Theorem \ref{thm:bav-dual-rel-grom}) gives
        \[
            -2\left<\eu_b^\mathbb{R}(S),\alpha\right>\leq\frac{\left<-\eu_b^\mathbb{R}(S),\alpha\right>}{\inftynorm{\eu_b^\mathbb{R}(S)}}\leq\gromnorm{\alpha}.\qedhere
        \]
    \end{proof}
    
    \begin{rmk}
        In the case where $S$ has non-empty boundary, the converse of Theorem \ref{thm:eub-extr} holds: if $\gromnorm{\alpha}=-2\left<\eu_b^\mathbb{R}(S),\alpha\right>$, then $\alpha$ is projectively represented by a positive immersion \cite{cal-scl}*{Lemma 4.62}.
        However, this uses the existence of extremal surfaces for $\gromnorm{\alpha}$ (see \cite{cal-scl}*{Remark 4.65}), which is not known if $S$ is closed.
    \end{rmk}
    
\bibliography{PhD-Refs.bib}

\end{document}